
\documentclass[final,leqno]{siamltex}



\usepackage{amsfonts,amssymb} 
\usepackage{amsmath} 
\usepackage{color} 
\usepackage{graphicx} 
\usepackage{epsfig,mathrsfs} 
\usepackage[bf,SL,BF]{subfigure} 
\usepackage{fancyhdr} 
\usepackage{CJK} 
\usepackage{caption} 
\usepackage{wrapfig} 
\usepackage{cases} 
\usepackage{bm}
\usepackage{algorithm}
\usepackage{algorithmic}
\newtheorem{remark}{Remark}[section] 
\newtheorem{example}{Example}[section] 

\title{Uniform convergence of  multigrid finite element method  for  time-dependent Riesz  tempered fractional  problem
\thanks{This work was supported by NSFC 11601206 and NSFC 11601460.}}

\author{Minghua Chen\thanks{Corresponding author. School of Mathematics and Statistics, Gansu Key Laboratory of Applied Mathematics and Complex Systems,
 Lanzhou University, Lanzhou 730000, P.R. China  (Email: chenmh@lzu.edu.cn).}
        \and Weiping Bu \thanks{ School of Mathematics and Computational Science, Hunan Key Laboratory   for Computation and Simulation  in Science and Engineering,
        Xiangtan University, Xiangtan 411105, Hunan, P.R. China (Email: weipingbu@lsec.cc.ac.cn). }
       \and Wenya Qi\thanks{ School of Mathematics and Statistics, Gansu Key Laboratory of Applied Mathematics and Complex Systems,
 Lanzhou University, Lanzhou 730000, P.R. China (Email: qiwy16@lzu.edu.cn). }
 \and Yantao Wang
\thanks{ South University of Science and Technology of China,  Shenzhen 518055,  P.R. China   (wangyantao14@gmail.com).
}
 }

\begin{document}

\maketitle

\begin{abstract}
In this article a theoretical framework for the Galerkin finite element approximation to the time-dependent Riesz tempered fractional  problem is provided 
without the fractional  regularity assumption.
Because  the time-dependent problems should become easier to solve as the time step $\tau \rightarrow 0$, which
correspond to the mass matrix dominant  [R. E.  Bank  and  T.  Dupont, {\em  Math. Comp.}, 153 (1981), pp. 35--51].
Based on the introduced and analysis of the  fractional $\tau$-norm,   the uniform convergence estimates of the V-cycle multigrid method
with the time-dependent fractional problem   is strictly proved,
which means that  the convergence rate of the  V-cycle MGM is independent of the mesh size $h$ and the time step $\tau$. The numerical experiments are performed to verify the convergence
with only   $\mathcal{O}(N \mbox{log} N)$ complexity by the fast Fourier transform method, where $N$  is the number of the grid points.
To the best of our knowledge, this is the first proof for  the convergence rate of the V-cycle multigrid finite element method  with $\tau\rightarrow 0$.

\end{abstract}

\begin{keywords}
uniform convergence of  V-cycle multigrid method, finite element method, fractional $\tau$-norm, time-dependent Riesz tempered fractional  problem, fast Fourier transform
\end{keywords}

\begin{AMS}
65M55, 76M10, 35R11, 65T50
\end{AMS}

\pagestyle{myheadings}
\thispagestyle{plain}
\markboth{~~}{MULTIGRID FEM FOR TIME-DEPENDENT  FRACTIONAL PROBLEM}

\section{Introduction}
In most practical problems, the real physical domain is bounded and/or    the observables involved in dynamical  have finite moments.
Then, based on the continuous time random walk (CTRW) model \cite{Metzler:00},  the Riesz tempered fractional diffusion  equation can be derived by tempering the probability
of large jump length of the L\'{e}vy flights   \cite{Baeumera:10,Cartea:07,Chen:15,del-Castillo-Negrete:09}, which describes the probability density function (PDF) of the truncated L\'{e}vy flights \cite{Koponen:95,Mantegna:94}.
In this article we investigate the multigrid  finite element method (FEM) for solving the  time-dependent Riesz tempered fractional  problem
\begin{equation}\label{1.1}
\begin{split}
P_t(x,t)-\mathbb{\nabla}_x^{\alpha,\lambda}P(x,t)&=\varphi(x,t)~~~\,\,{\rm in}~\,~\Omega\times (0,T],\\
     P(x,t)&=0~~~~~~~~\,~~\,{\rm on} ~\left({\mathbb R}\setminus \Omega\right)\times (0,T],\\
     P(x,0)&=\psi(x)~~~\,~~~{\rm on} ~~\Omega\times \{ t=0 \},
 \end{split}
\end{equation}
where we  assume $\Omega=(a,b)$ to be an open, bounded subset of ${\mathbb R}$ for some fixed time $T>0$.
The Riesz tempered   fractional derivative   is defined by  \cite{Cartea:07}
\begin{equation}\label{1.2}
\nabla_x^{\alpha,\lambda} P(x,t) =\kappa_{\alpha}\left[  {_{a}}\mathbb{D}_x^{\alpha,\lambda}+{ _{x}}\mathbb{D}_{b}^{\alpha,\lambda}\right]P(x,t)
\end{equation}
with $\lambda>0, \,\kappa_{\alpha}=-\frac{1}{2\cos(\alpha \pi/2)}>0$, $\alpha \in (1,2)$.
 The left and right tempered Riemann-Liouville fractional derivatives are, respectively, defined by
\begin{equation}\label{1.3}
\begin{split}
 {_{a}}\mathbb{D}_x^{\alpha,\lambda}u(x)
&={ _{a}}D_x^{\alpha,\lambda}u(x)-\lambda^\alpha u(x)-\alpha \lambda^{\alpha-1} \frac{\partial u(x) }{\partial x},\\
{ _{x}}\mathbb{D}_{b}^{\alpha,\lambda}u(x)
&={_{x}}D_{b}^{\alpha,\lambda}u(x)-\lambda^\alpha u(x) +\alpha \lambda^{\alpha-1} \frac{\partial u(x) }{\partial x}.
\end{split}
\end{equation}
Here the main term of the left and right tempered Riemann-Liouville fractional derivatives, respectively, are given in \cite{Chen:13,Chen:15}
\begin{equation}\label{1.4}
\begin{split}
{ _{a}}D_x^{\alpha,\lambda}u(x)
&=e^{-\lambda x}{ _{a}}D_x^{\alpha}[e^{\lambda x}u(x)]=\frac{e^{-\lambda x}}{\Gamma(2-\alpha)}\frac{d^2}{dx^2} \int_a^x  \frac{e^{\lambda \xi} u(\xi)}{(x-\xi)^{\alpha-1}}d\xi,\\
{_{x}}D_{b}^{\alpha,\lambda}u(x)
&=e^{\lambda x}{_{x}}D_{b}^{\alpha}[e^{-\lambda x}u(x)]=\frac{e^{\lambda x}}{\Gamma(2-\alpha)}\frac{d^2}{dx^2} \int_x^b  \frac{e^{-\lambda \xi} u(\xi)}{(\xi-x)^{\alpha-1}}d\xi,
\end{split}
\end{equation}
and  the left and right tempered Riemann-Liouville fractional integrals, respectively, are defined by \cite{Podlubny:99}
\begin{equation}\label{1.5}
\begin{split}
{ _{a}}D_x^{-\alpha,\lambda}u(x)
&=e^{-\lambda x}{ _{a}}D_x^{-\alpha}[e^{\lambda x}u(x)]=\frac{1}{\Gamma(\alpha)} \int_a^x  e^{-\lambda (x-\xi)} {(x-\xi)^{\alpha-1}}u(\xi)d\xi,\\
{_{x}}D_{b}^{-\alpha,\lambda}u(x)
&=e^{\lambda x}{_{x}}D_{b}^{-\alpha}[e^{-\lambda x}u(x)]=\frac{1}{\Gamma(\alpha)} \int_x^b  e^{-\lambda (\xi-x)} {(\xi-x)^{\alpha-1}}u(\xi)d\xi.
\end{split}
\end{equation}

To make a coercive bilinear form  for the model (see section \ref{section2.2} for details),
we can take $P=e^{\sigma t}u$ with $\sigma>0$ \cite{Celik:17,Friedman:64}.  Then (\ref{1.1}) can be rewrite as
\begin{equation}\label{1.6}
\left\{ \begin{array}
 {l@{\quad } l}
u_t(x,t)-\kappa_{\alpha}\left[  {_{a}}\mathbb{D}_x^{\alpha,\lambda}+{ _{x}}\mathbb{D}_{b}^{\alpha,\lambda}\right]u(x,t)
+\sigma u(x,t)\!\!\!\!\!\!\!\!\!&=f(x,t)~\,\!\,\,{\rm in} ~\Omega\times (0,T],\\
   ~~~~ ~~~~~~~~~~~~~~~~~~~~~ ~~~~ ~~~~~~~~~~~~~~~~~~~~~~~    u(x,t)\!\!\!\!&=0~~~~\,~~~~{\rm on} \left({\mathbb R}\setminus \Omega\right)\times (0,T],\\
  ~~~~ ~~~~~~~~~~~~~~~~~~~~~ ~~~~ ~~~~~~~~~~~~~~~~~~~~~\,\,~ u(x,0)\!\!\!\!&=\psi(x)~~~~{\rm on} ~\Omega\times \{ t=0 \}
 \end{array}
 \right.
\end{equation}
with $f(x,t)=e^{-\sigma t}\varphi(x,t)$.

There are already some important progress for numerically solving the FEM fractional problems \cite{Metzler:00}.
For example, the time-fractional and/or space-fractional problems are  discussed in \cite{Bu:14,Bu:17,Deng:08,Ervin:07,Li:13,Ren:17,Zeng:13}.
A theoretical framework with the fractional  regularity assumption for the Galerkin finite element approximation to the steady-state fractional model is first presented in \cite{Ervin:05}
and  intensive studied in  \cite{Acosta:17,Jin:16,Wang:13}.
Ros-Oton and Serra study the regularity up to the boundary of solutions to the Dirichlet problem for the fractional Laplacian with H\"{o}lder estimates \cite{Ros-Oton:17};
Jin et al. pointed out that there is still a lack of the regularity of   weak solution  in general  but  given  the   regularity results of {\em strong} solutions \cite{Jin:15};
and Ervin et al. investigated the regularity of the solution to the two-side fractional diffusion equation  \cite{Ervin:18}.
Recently,  the  tempered fractional problems  with the tempered fractional  regularity assumption  are discussed in
\cite{Celik:17,Deng:16}. Here  a theoretical framework for the Galerkin finite element approximation to the time-dependent Riesz tempered fractional  problem is presented
without  the fractional  regularity assumption.

When considering iterative solvers for the large-scale linear systems arising  from the approximation of elliptic partial differential equations (PDEs),
multigrid methods (MGM) (such as  V-cycle and W-cycle) are often  optimal order process \cite {Bank:81,Bramble:87,Hackbusch:85}.
The elegant theoretical framework and  uniform convergence of the  V-cycle MGM for second order elliptic equation  is well established in \cite{Bank:85,Bramble:87,Xu:02}.
The convergence rate independent of the number of levels is presented by multigrid finite difference method for elliptic equations with variable coefficients \cite{Jia:14}.
In the case of  multilevel matrix algebras, for  special prolongation operators  \cite{Fiorentino:96},
the  convergence rate   of the  V-cycle MGM is  derived in \cite{Arico:07,Arico:04,Bolten:15} for the elliptic PDEs.
Using the traditional  (simple)  prolongation operator,  for the time-dependent second elliptic problems,
the new convergence proofs for V-cycle MGM including multilevel linear systems are given in \cite{CD:17,CDS:17}.
For the time-independent fractional PDEs, based on the idea of \cite{Bank:85,Bramble:87,Brenner:08},  the  convergence rate  of the  V-cycle MGM is discussed in \cite{Jiang:15,Zhou:13} and
the nearly uniform convergence result is derived in \cite{CN:16}.
For the time-dependent fractional PDEs, the convergence rate of the two-grid method has been performed in \cite{Chen:14,CD:17,Pang:12} by following the ideas in
 \cite{Chan:98,Fiorentino:96}; and the convergence of the V-cycle MGM is investigated with  $\tau$ (time step) a positive constant \cite{Bu:15}.
Because  the time-dependent PDEs should become easier to solve as the time step $\tau \rightarrow 0$, which
correspond to the mass matrix dominant \cite{Bank:81}.
As far as we know,  the convergence rate of the V-cycle multigrid  finite element method has not been consider for a time-dependent PDEs with $\tau\rightarrow 0$.
In this paper, based on the introduced and  analysis of the  fractional $\tau$-norm,   the convergence rate of the V-cycle MGM  is strictly proved,
i.e.,  the uniform convergence of the  V-cycle MGM is independent of the mesh size $h$ and  $\tau$.
Moreover, the fast Toeplitz matrix-vector multiplication is utilized to lower the computational cost with only
$\mathcal{O}(N \mbox{log} N)$ complexity by the  fast Fourier transform (FFT) method, where $N$  is the number of the grid points.

The  outline of the paper is as follows. In  the next section,  we reivew the tempered fractional Sobolev space and prove a coercivity and a bondness for the   bilinear form.
A theoretical framework for the Galerkin finite element approximation to the time-dependent Riesz tempered fractional  problem is presented
without the fractional  regularity assumption in Section 3.
In Section 4, we first define the fractional $\tau$-norm and prove   the convergence estimates of the V-cycle MGM with time-dependent fractional PDEs.
Results of numerical experiments are reported and discussed in Section 5, in order to show the effectiveness of the presented schemes. Finally, we conclude the paper with some remarks.

\section{Preliminaries}
Throughout this article, $c$, $C$ and $C_i$, $i\geq 0$ will denote positive constants, not necessarily the same at different occurrences,
which are independent of the mesh size $h$ and the time step $\tau$.
Let $C_0^\infty(\Omega)$ denote the set of all function $u\in C^\infty(\Omega)$ that vanish outside a compact subset of $\Omega$.
For $\nu\geq 0$, let $H^{\nu}(\Omega)$ denote the Sobolev space of order $\nu$ on the interval $\Omega$ and $H_0^{\nu}(\Omega)$ the set
of functions in $H^{\nu}(\Omega)$ whose extension by 0 are in $H^{\nu}({\mathbb R})$.
This section is devoted to some results on tempered fractional Sobolev spaces and some properties on bilinear form.

\subsection{Tempered fractional Sobolev spaces}
Based on the idea of \cite{Celik:17,Deng:16,Ervin:05}, we further develop the abstract setting for the analysis of the approximation to the  tempered fractional diffusion equation.

\begin{definition}\label{definition2.1}
Let $\nu>0$, $\lambda>0$.  Define the semi-norm
$$|u|_{J_L^{\nu,\lambda}({\mathbb R})}:=||{_{-\infty}}D_x^{\nu,\lambda}u||_{L^2({\mathbb R})},$$
and norm
$$||u||_{J_L^{\nu,\lambda}({\mathbb R})}:=\left(||u||^2_{L^2({\mathbb R})}+|u|_{J_L^{\nu,\lambda}({\mathbb R})}^2\right)^{1/2},$$
where $J_L^{\nu,\lambda}({\mathbb R})$ denotes the closure of $C_0^\infty({\mathbb R})$ with respect to $||\cdot||_{J_L^{\nu,\lambda}({\mathbb R})}$.
\end{definition}

\begin{definition}\label{definition2.2}
Let $\nu>0$, $\lambda>0$.  Define the semi-norm
$$|u|_{J_R^{\nu,\lambda}({\mathbb R})}:=||{_{x}}D_{\infty}^{\nu,\lambda}u||_{L^2({\mathbb R})},$$
and norm
$$||u||_{J_R^{\nu,\lambda}({\mathbb R})}:=\left(||u||^2_{L^2({\mathbb R})}+|u|_{J_R^{\nu,\lambda}({\mathbb R})}^2\right)^{1/2},$$
where $J_R^{\nu,\lambda}({\mathbb R})$ denotes the closure of $C_0^\infty({\mathbb R})$ with respect to $||\cdot||_{J_R^{\nu,\lambda}({\mathbb R})}$.
\end{definition}

In the following analysis, we define a semi-norm for functions in $H^{\nu,\lambda}({\mathbb R})$ in terms of the Fourier transform.
We shall depart from the constant coefficient $\sqrt{2\pi}$ of the  inverse Fourier transform. This convention simplifies the appearance of results
such as the following Parseval's Formula \cite{Rudin:87}
\begin{equation}\label{2.1}
  \int_{\mathbb R} u(x)\overline{v(x)}dx= \int_{\mathbb R} \widehat{u}(\omega)\overline{\widehat{v}(\omega)}d\omega,
\end{equation}
where the bar denotes complex conjugation.

\begin{lemma} (\cite{Cartea:07,Chen:13,Chen:15})\label{lemma2.3}
Let $\nu>0$, $\lambda>0$. Let  $u$,~${ _{-\infty}}D_x^{\nu,\lambda}u$, ${_{x}}D_{\infty}^{\nu,\lambda}u$ and their Fourier transform belong to  $ L^1({\mathbb R})$. Then
\begin{equation*}
\mathcal{F}\left({ _{-\infty}}D_x^{\nu,\lambda}u(x)\right)
 =(\lambda-i\omega)^{\nu}\widehat{u}(\omega),
\end{equation*}
and
\begin{equation*}
\mathcal{F}\left({_{x}}D_{\infty}^{\nu,\lambda}u(x)\right)
  =(\lambda+i\omega)^{\nu}\widehat{u}(\omega),
\end{equation*}
where $\mathcal{F}$ denotes Fourier transform operator and $\widehat{u}(\omega)=\mathcal{F}(u)$, i.e.,
\begin{equation*}
    \widehat{u}(\omega)=\int_{{\mathbb R}}e^{i\omega x }u(x)dx.
 \end{equation*}
\end{lemma}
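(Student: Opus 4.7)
The plan is to prove the Fourier transform identity by direct computation, essentially recognizing the tempered Riemann--Liouville integral operator as a convolution with an exponentially decaying kernel and then invoking the differentiation property of the Fourier transform. For concreteness I treat $\nu = \alpha \in (1,2)$ in the representation (\ref{1.4}); the general case $\nu>0$ is identical after taking $n=\lceil \nu\rceil$ derivatives and an $(n-\nu)$-order tempered integral.

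First I would rewrite
$${_{-\infty}}D_x^{\alpha,\lambda}u(x) = e^{-\lambda x}\,w''(x),\qquad w(x):=\frac{1}{\Gamma(2-\alpha)}\int_{-\infty}^{x}\frac{e^{\lambda\xi}u(\xi)}{(x-\xi)^{\alpha-1}}\,d\xi,$$
multiply by $e^{i\omega x}$ and integrate. The combined exponential $e^{(i\omega-\lambda)x}$ allows two integrations by parts that transfer the two derivatives off $w$, producing the factor $(i\omega-\lambda)^2=(\lambda-i\omega)^2$, with boundary terms vanishing under the $L^1$ hypotheses of the lemma.

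Next, I would apply Fubini to interchange the order of integration and substitute $y=x-\xi$. The double integral then factors as $\widehat{u}(\omega)$ times a one-dimensional Gamma integral,
$$\int_{0}^{\infty} y^{(2-\alpha)-1}\,e^{-(\lambda-i\omega)y}\,dy=\Gamma(2-\alpha)\,(\lambda-i\omega)^{\alpha-2},$$
which is valid since $\mathrm{Re}(\lambda-i\omega)=\lambda>0$. Cancelling the $\Gamma(2-\alpha)$ factors yields
$$\mathcal{F}\!\left({_{-\infty}}D_x^{\alpha,\lambda}u\right)(\omega)=(\lambda-i\omega)^{2}\cdot (\lambda-i\omega)^{\alpha-2}\,\widehat{u}(\omega)=(\lambda-i\omega)^{\alpha}\widehat{u}(\omega),$$
as claimed. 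The right-sided identity for ${_{x}}D_{\infty}^{\alpha,\lambda}u$ follows by the symmetric calculation (or, equivalently, by the reflection $x\mapsto -x$), with $p=\lambda+i\omega$ replacing $p=\lambda-i\omega$ in the Gamma integral.

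The main obstacle is the justification of (i) the double integration by parts with vanishing boundary contributions, and (ii) the Fubini interchange over the non-product domain $\{(x,\xi):\xi\le x\}$. Both are secured by the standing hypothesis that $u$, ${_{-\infty}}D_x^{\nu,\lambda}u$, ${_{x}}D_{\infty}^{\nu,\lambda}u$ and their Fourier transforms all lie in $L^{1}(\mathbb{R})$, reinforced by the exponential decay $e^{-\lambda(x-\xi)}$ of the tempering kernel which guarantees absolute convergence of the iterated integrals. An alternative, equivalent route would be to exploit the tempering identity ${_{-\infty}}D_x^{\nu,\lambda}u = e^{-\lambda x}{_{-\infty}}D_x^{\nu}[e^{\lambda x}u]$ together with the known transform $\mathcal{F}({_{-\infty}}D_x^{\nu}v)=(-i\omega)^{\nu}\widehat{v}$ and the complex shift $\mathcal{F}(e^{\pm\lambda x}f)(\omega)=\widehat{f}(\omega\mp i\lambda)$; this is slicker but shifts the technicality onto analytic continuation of the Fourier transform into a strip, which under the given integrability assumptions is again routine.
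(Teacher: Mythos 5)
Your computation is correct. Note, however, that the paper does not prove Lemma \ref{lemma2.3} at all: it is quoted from the cited references \cite{Cartea:07,Chen:13,Chen:15}, so there is no in-paper proof to compare against. Your argument — two integrations by parts transferring $\frac{d^2}{dx^2}$ onto the exponential $e^{(i\omega-\lambda)x}$ to produce $(\lambda-i\omega)^2$, followed by Fubini and the Gamma integral $\int_0^\infty y^{1-\alpha}e^{-(\lambda-i\omega)y}\,dy=\Gamma(2-\alpha)(\lambda-i\omega)^{\alpha-2}$ (valid since $\operatorname{Re}(\lambda-i\omega)=\lambda>0$) — is exactly the standard derivation given in those references, and it is consistent with the paper's sign convention $\widehat{u}(\omega)=\int e^{i\omega x}u(x)\,dx$, under which $\mathcal{F}(u')=-i\omega\,\widehat{u}$. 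One small remark on your extension to general $\nu$ with $n=\lceil\nu\rceil$: each single integration by parts contributes a factor $-(i\omega-\lambda)=\lambda-i\omega$, so the $n$-fold step yields $(\lambda-i\omega)^{n}$ for every $n$, not just even $n$; combined with $(\lambda-i\omega)^{\nu-n}$ from the order-$(n-\nu)$ tempered integral this gives $(\lambda-i\omega)^{\nu}$ without any stray sign, so the generalization goes through as you claim.
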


\begin{definition}\label{definition2.4}
Let $\nu>0$, $\lambda>0$.  Define the semi-norm
$$|u|_{H^{\nu,\lambda}({\mathbb R})}:=||\left(\lambda^2+|\omega|^2\right)^{\nu/2}\widehat{u}||_{L^2({\mathbb R})},$$
and norm
$$||u||_{H^{\nu,\lambda}({\mathbb R})}:=\left(||u||^2_{L^2({\mathbb R})}+|u|_{H^{\nu,\lambda}({\mathbb R})}^2\right)^{1/2},$$
where $H^{\nu,\lambda}({\mathbb R})$ denotes the closure of $C_0^\infty({\mathbb R})$ with respect to $||\cdot||_{H^{\nu,\lambda}({\mathbb R})}$.
\end{definition}

\begin{definition}\label{definition2.5}
Let $\nu>0$, $\lambda=0$.  Define the semi-norm
$$|u|_{H^{\nu}({\mathbb R})}:=||\,|\omega|^{\nu}\widehat{u}||_{L^2({\mathbb R})},$$
and norm
$$||u||_{H^{\nu}({\mathbb R})}:=\left(||u||^2_{L^2({\mathbb R})}+|u|_{H^{\nu}({\mathbb R})}^2\right)^{1/2}
=\left(1+|\omega|^{2\nu}\right)^{1/2}||\widehat{u}||_{L^2({\mathbb R})},$$
where $H^{\nu}({\mathbb R})$ denotes the closure of $C_0^\infty({\mathbb R})$ with respect to $||\cdot||_{H^{\nu}({\mathbb R})}$.
\end{definition}
\begin{lemma}\cite{Adams:75,Deng:16}\label{lemma2.6}
Let $\nu >0$ and $x\geq 0$. Then
\begin{equation*}
\begin{split}
&(1+x^\nu)\leq (1+x)^\nu\leq 2^{\nu-1}(1+x^\nu)~~~~\forall ~\nu \geq 1; \\
& 2^{\nu-1}(1+x^\nu) \leq (1+x)^\nu\leq (1+x^\nu)~~~~\forall ~0<\nu\leq 1.
\end{split}
\end{equation*}
\end{lemma}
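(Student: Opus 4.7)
The plan is to split the statement into the two regimes $\nu\geq 1$ and $0<\nu\leq 1$ and, in each regime, obtain one inequality from Jensen's inequality applied to the map $t\mapsto t^{\nu}$ and the other from a direct monotonicity argument on an auxiliary one-variable function.

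For $\nu\geq 1$, the function $t\mapsto t^{\nu}$ is convex on $[0,\infty)$, so applying Jensen at the points $1$ and $x$ gives
\[
\left(\frac{1+x}{2}\right)^{\nu}\leq \frac{1+x^{\nu}}{2},
\]
and multiplying by $2^{\nu}$ yields the upper bound $(1+x)^{\nu}\leq 2^{\nu-1}(1+x^{\nu})$. For the complementary lower bound $1+x^{\nu}\leq(1+x)^{\nu}$, I would set $f(x)=(1+x)^{\nu}-1-x^{\nu}$, note $f(0)=0$, and compute $f'(x)=\nu[(1+x)^{\nu-1}-x^{\nu-1}]\geq 0$, since $t\mapsto t^{\nu-1}$ is nondecreasing when $\nu\geq 1$ and $1+x\geq x$.

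For $0<\nu\leq 1$, the map $t\mapsto t^{\nu}$ is concave on $[0,\infty)$, so Jensen reverses to give
\[
\left(\frac{1+x}{2}\right)^{\nu}\geq \frac{1+x^{\nu}}{2},
\]
which rearranges to the lower bound $2^{\nu-1}(1+x^{\nu})\leq (1+x)^{\nu}$. For the upper bound $(1+x)^{\nu}\leq 1+x^{\nu}$, I would set $g(x)=1+x^{\nu}-(1+x)^{\nu}$, observe $g(0)=0$, and verify that $g'(x)=\nu[x^{\nu-1}-(1+x)^{\nu-1}]\geq 0$ on $(0,\infty)$, since $\nu-1\leq 0$ makes $t\mapsto t^{\nu-1}$ nonincreasing.

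There is no serious obstacle here; the result is essentially an exercise in one-variable calculus. The only point requiring care is tracking the direction in which Jensen flips between the two regimes and the associated sign of $\nu-1$ in the derivatives of the auxiliary functions, so that no inequality is inadvertently reversed. A minor technicality is that $x^{\nu-1}$ is singular at $x=0$ when $0<\nu<1$, but this is harmless: one only needs $g$ nondecreasing on $(0,\infty)$ together with continuity and $g(0)=0$ to conclude $g\geq 0$.
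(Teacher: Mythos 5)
Your proof is correct and complete. The paper itself does not prove Lemma 2.6 — it is simply quoted from the cited references \cite{Adams:75,Deng:16} — so there is no in-paper argument to compare against; your combination of Jensen's inequality (convexity/concavity of $t\mapsto t^{\nu}$ evaluated at $1$ and $x$) for the factor-$2^{\nu-1}$ bounds and the elementary monotonicity argument for the remaining bounds is the standard way to establish this, and your handling of the singularity of $x^{\nu-1}$ at $x=0$ when $0<\nu<1$ is appropriately careful.
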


\begin{lemma} \cite{Celik:17,Deng:16,Ervin:05}[Fractional Poincar\'{e}-Friedrichs]\label{lemma2.7}
For $u\in J_{L}^{\nu,\lambda}({\mathbb R})$ or $u\in J_{R}^{\nu,\lambda}({\mathbb R})$ with a compact subset of $\Omega$,  respectively, we have
$$||u||_{L^2({\mathbb R})}\leq C|u|_{J_L^{\nu,\lambda}({\mathbb R})}~~{\rm and }~~||u||_{L^2({\mathbb R})}\leq C|u|_{J_R^{\nu,\lambda}({\mathbb R})}.$$
\end{lemma}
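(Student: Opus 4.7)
The plan is to pass to the Fourier side, where the tempered fractional derivative has the explicit multiplier given by Lemma~\ref{lemma2.3}. First I would apply Parseval's formula \eqref{2.1} to the definition of the semi-norm:
\begin{equation*}
|u|_{J_L^{\nu,\lambda}(\mathbb{R})}^2 = \|{_{-\infty}}D_x^{\nu,\lambda}u\|_{L^2(\mathbb{R})}^2 = \int_{\mathbb{R}} \bigl|(\lambda-i\omega)^{\nu}\bigr|^2\,|\widehat{u}(\omega)|^2\,d\omega = \int_{\mathbb{R}} (\lambda^2+\omega^2)^{\nu}\,|\widehat{u}(\omega)|^2\,d\omega,
\end{equation*}
using $|\lambda - i\omega|^{2\nu} = (\lambda^2+\omega^2)^{\nu}$. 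This is legitimate because $u\in J_L^{\nu,\lambda}(\mathbb{R})\subset L^2(\mathbb{R})$ and ${_{-\infty}}D_x^{\nu,\lambda}u\in L^2(\mathbb{R})$ by definition.

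Next, since $\lambda>0$ is strictly positive, we have the trivial pointwise lower bound $(\lambda^2+\omega^2)^{\nu}\geq \lambda^{2\nu}$ for every $\omega\in\mathbb{R}$. Inserting this into the integral and applying Parseval once more on the $L^2$ side yields
\begin{equation*}
|u|_{J_L^{\nu,\lambda}(\mathbb{R})}^2 \geq \lambda^{2\nu}\int_{\mathbb{R}} |\widehat{u}(\omega)|^2\,d\omega = \lambda^{2\nu}\|u\|_{L^2(\mathbb{R})}^2,
\end{equation*}
from which the first inequality follows with the explicit constant $C=\lambda^{-\nu}$. The right-sided estimate is entirely symmetric: the symbol of ${_{x}}D_{\infty}^{\nu,\lambda}$ is $(\lambda+i\omega)^{\nu}$, whose modulus squared is again $(\lambda^2+\omega^2)^{\nu}$, so the identical argument applies verbatim.

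The main observation, rather than an obstacle, is that the tempering parameter $\lambda>0$ supplies a uniform spectral gap at the origin. This is what makes the inequality so soft here; in the untempered case $\lambda=0$ one would have to invoke a Hardy-type or zero-extension argument to handle the vanishing of the symbol $|\omega|^{\nu}$ near $\omega=0$, and one would genuinely need the compact-support hypothesis. With $\lambda>0$, the compact support assumption in the statement is effectively inert for the proof itself; it is presumably retained only for compatibility with the finite element framework in which these semi-norms will later be combined with Dirichlet boundary conditions on $\Omega$.
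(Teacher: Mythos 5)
Your proof is correct, and in fact the paper never proves this lemma at all --- it is imported by citation from \cite{Celik:17,Deng:16,Ervin:05}, so there is no internal argument to compare against. Your spectral-gap derivation is the natural one for the tempered case: combining Lemma \ref{lemma2.3} with Parseval gives $|u|_{J_L^{\nu,\lambda}(\mathbb{R})}^2=\int_{\mathbb{R}}(\lambda^2+\omega^2)^{\nu}|\widehat{u}|^2\,d\omega\geq\lambda^{2\nu}\|u\|_{L^2(\mathbb{R})}^2$, and the right-sided symbol $(\lambda+i\omega)^{\nu}$ has the same modulus, so both inequalities hold with $C=\lambda^{-\nu}$. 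Two small points are worth making explicit. First, Lemma \ref{lemma2.3} is stated under $L^1$ integrability hypotheses, so the clean way to justify the Parseval step is to prove the inequality for $u\in C_0^\infty(\mathbb{R})$ and then pass to the closure defining $J_L^{\nu,\lambda}(\mathbb{R})$; your parenthetical justification skips this density step, but it is routine. Second, your constant degenerates as $\lambda\to 0^+$, whereas the untempered Poincar\'e--Friedrichs inequality of \cite{Ervin:05} has a constant depending on $\mathrm{diam}(\Omega)$ and genuinely uses the compact support (via control of low frequencies); this is consistent with the paper's convention that constants need only be independent of $h$ and $\tau$, and your closing observation that the compact-support hypothesis is inert precisely because $\lambda>0$ is accurate and clarifies why the tempered case is softer.
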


\begin{theorem}\label{theorem2.8}
Let $\nu>0$, $\lambda>0$, $u\in J_{L}^{\nu,\lambda}({\mathbb R})$. Then
the space $J_L^{\nu,\lambda}({\mathbb R})$, $J_R^{\nu,\lambda}({\mathbb R})$, $H^{\nu,\lambda}({\mathbb R})$,  $H^{\nu}({\mathbb R})$
are equal with respect to the semi-norms and norms; and in fact
$$|u|_{H^{\nu,\lambda}({\mathbb R})}=|u|_{J_L^{\nu,\lambda}({\mathbb R})}= |u|_{J_R^{\nu,\lambda}({\mathbb R})},$$
and
$$C_1|u|_{J_L^{\nu,\lambda}({\mathbb R})}\leq |u|_{H^{\nu}({\mathbb R})}\leq C_2|u|_{J_L^{\nu,\lambda}({\mathbb R})}.$$
\end{theorem}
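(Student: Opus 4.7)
My plan is to reduce all four seminorms to weighted $L^2$ integrals of $|\widehat{u}|^2$ via Parseval's identity, and then compare the resulting Fourier symbols using Lemma~2.6 together with the Poincaré--Friedrichs inequality (Lemma~2.7).

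First, I would apply Lemma~2.3 and Parseval's formula~(2.1). Because $|\lambda\pm i\omega|^{2\nu}=(\lambda^{2}+\omega^{2})^{\nu}$, both tempered-derivative seminorms collapse to the same Fourier-side expression
\[
|u|_{J_L^{\nu,\lambda}(\mathbb{R})}^{2}
=\int_{\mathbb{R}}(\lambda^{2}+\omega^{2})^{\nu}|\widehat{u}(\omega)|^{2}\,d\omega
=|u|_{J_R^{\nu,\lambda}(\mathbb{R})}^{2},
\]
which is precisely $|u|_{H^{\nu,\lambda}(\mathbb{R})}^{2}$ by Definition~2.4. This establishes the three-way seminorm equality at once.

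For the comparison with $|u|_{H^{\nu}(\mathbb{R})}$, the upper bound $|u|_{H^{\nu}}\le|u|_{J_L^{\nu,\lambda}}$ is the easy direction: the pointwise inequality $|\omega|^{2\nu}\le(\lambda^{2}+\omega^{2})^{\nu}$ yields it directly with $C_{2}=1$. The reverse direction is the main content. I would write $(\lambda^{2}+\omega^{2})^{\nu}=\lambda^{2\nu}(1+\omega^{2}/\lambda^{2})^{\nu}$ and apply Lemma~2.6 with $x=\omega^{2}/\lambda^{2}$ (splitting according to $\nu\ge1$ or $0<\nu\le1$) to obtain
\[
(\lambda^{2}+\omega^{2})^{\nu}\le C_{\nu}\bigl(\lambda^{2\nu}+|\omega|^{2\nu}\bigr),
\]
and integrating against $|\widehat{u}|^{2}$ gives
\[
|u|_{J_L^{\nu,\lambda}(\mathbb{R})}^{2}\le C_{\nu}\bigl(\lambda^{2\nu}\|u\|_{L^{2}(\mathbb{R})}^{2}+|u|_{H^{\nu}(\mathbb{R})}^{2}\bigr).
\]
To finish I must absorb $\|u\|_{L^{2}}^{2}$ into $|u|_{H^{\nu}}^{2}$; this is exactly where the fractional Poincaré--Friedrichs inequality (Lemma~2.7 applied with $\lambda=0$, or its classical $H^{\nu}$ counterpart) enters, relying on the compact support in the bounded set $\Omega$ inherited from the density of $C_{0}^{\infty}(\Omega)$ to conclude $\|u\|_{L^{2}}\le C\,|u|_{H^{\nu}}$, whence $C_{1}=[C_{\nu}(\lambda^{2\nu}C^{2}+1)]^{-1/2}$.

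The main obstacle is precisely this lower inequality: since the symbol $(\lambda^{2}+\omega^{2})^{\nu}$ is strictly positive at $\omega=0$ while $|\omega|^{2\nu}$ vanishes there, no purely pointwise Fourier-symbol estimate can work, and the compact-support hypothesis built into $J_L^{\nu,\lambda}(\mathbb{R})$ via density of $C_{0}^{\infty}(\Omega)$ is indispensable. The associated full-norm equivalences then follow by adding $\|u\|_{L^{2}}^{2}$ to each seminorm identity, completing the assertion that all four spaces coincide with equivalent norms.
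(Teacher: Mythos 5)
Your proposal is correct and follows essentially the same route as the paper: Lemma~\ref{lemma2.3} plus Parseval's formula give the three-way seminorm identity, and the comparison with $|u|_{H^{\nu}({\mathbb R})}$ rests on the elementary inequality of Lemma~\ref{lemma2.6} combined with the Poincar\'{e}-type bound $\|u\|_{L^2({\mathbb R})}\le C|u|_{H^{\nu}({\mathbb R})}$ to absorb the zero-frequency contribution. Your version is slightly streamlined — factoring out $\lambda^{2\nu}$ avoids the paper's separate treatment of $\lambda\ge 1$ and $0<\lambda\le 1$, and you obtain the upper bound with constant $1$ directly — but the key ingredients and the identification of the lower bound as the step requiring compact support are identical.
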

\begin{proof}
According to Definition \ref{definition2.1} and  Lemma \ref{lemma2.7}, there exists
$$ |u|_{J_L^{\nu,\lambda}({\mathbb R})}\leq ||u||_{J_L^{\nu,\lambda}({\mathbb R})}=\left(||u||^2_{L^2({\mathbb R})}+|u|_{J_L^{\nu,\lambda}({\mathbb R})}^2\right)^{1/2}
\leq \left(1+C^2 \right)^{1/2} |u|_{J_L^{\nu,\lambda}({\mathbb R})},$$
which means that we have norm equivalence of $||u||_{J_L^{\nu,\lambda}({\mathbb R})}$ and $|u|_{J_L^{\nu,\lambda}({\mathbb R})}$.
From  Definitions \ref{definition2.1}, \ref{definition2.2}, \ref{definition2.4}, \ref{definition2.5} and  Lemma \ref{lemma2.3}
and Parseval's Formula (\ref{2.1}), we obtain
$$|u|_{H^{\nu,\lambda}({\mathbb R})}= |u|_{J_R^{\nu,\lambda}({\mathbb R})}= |u|_{J_L^{\nu,\lambda}({\mathbb R})}=||\left(\lambda^2+|\omega|^2\right)^{\nu/2}\widehat{u}||_{L^2({\mathbb R})},$$
and
$||u||_{H^{\nu,\lambda}({\mathbb R})}=||u||_{J_R^{\nu,\lambda}({\mathbb R})}=||u||_{J_L^{\nu,\lambda}({\mathbb R})}.$

Next we shall  prove norm equivalence of $||u||_{H^{\nu}({\mathbb R})}$ and $|u|_{J_L^{\nu,\lambda}({\mathbb R})}$ (or  $||u||_{J_L^{\nu,\lambda}({\mathbb R})}$).
Using Lemma  \ref{lemma2.6} with  $\nu \geq 1$, we obtain
\begin{equation}\label{2.2}
(1+|\omega|^{2\nu})^\frac{1}{2}\leq (1+|\omega|^2)^\frac{\nu}{2}\leq 2^\frac{{\nu-1}}{2}(1+|\omega|^{2\nu})^\frac{1}{2}~~~~\forall ~\nu \geq 1,
\end{equation}
and it is easy  to get
\begin{equation}\label{2.3}
\begin{split}
&\frac{1}{\lambda^\nu}\left(  \lambda^2+|\omega|^2\right)^{\nu/2}
 \leq \left( 1+|\omega|^2\right)^{\nu/2}\leq \left(  \lambda^2+|\omega|^2\right)^{\nu/2}~~~~\forall ~\lambda\geq 1, \\
& \left(  \lambda^2+|\omega|^2\right)^{\nu/2}\leq \left( 1+|\omega|^2\right)^{\nu/2}\leq \frac{1}{\lambda^\nu}\left(  \lambda^2+|\omega|^2\right)^{\nu/2} ~~~~\forall ~0<\lambda\leq 1.
\end{split}
\end{equation}
Multiplying  (\ref{2.2}) and (\ref{2.3}) by $||\widehat{u}||_{L^2({\mathbb R})}$,  and using Definitions \ref{definition2.1}, \ref{definition2.2}, \ref{definition2.4}, \ref{definition2.5} and  Lemma \ref{lemma2.3}, there exists
\begin{equation*}
\begin{split}
\frac{1}{\lambda^\nu}|u|_{J_L^{\nu,\lambda}({\mathbb R})}
& \leq (1+|\omega|^2)^\frac{\nu}{2}||\widehat{u}||_{L^2({\mathbb R})}
 \leq 2^\frac{{\nu-1}}{2}(1+|\omega|^{2\nu})^\frac{1}{2}||\widehat{u}||_{L^2({\mathbb R})}\\
& = 2^\frac{{\nu-1}}{2} ||u||_{H^{\nu}({\mathbb R})}
\leq  2^\frac{{\nu-1}}{2} (1+|\omega|^2)^\frac{\nu}{2}||\widehat{u}||_{L^2({\mathbb R})}\\
&\leq 2^\frac{{\nu-1}}{2}\left(  \lambda^2+|\omega|^2\right)^{\nu/2}||\widehat{u}||_{L^2({\mathbb R})}
= 2^\frac{{\nu-1}}{2} |u|_{J_L^{\nu,\lambda}({\mathbb R})}~~~~\forall ~\lambda\geq 1.
\end{split}
\end{equation*}
Similarly, we have
\begin{equation*}
\begin{split}
&|u|_{J_L^{\nu,\lambda}({\mathbb R})}
 \leq (1+|\omega|^2)^\frac{\nu}{2}||\widehat{u}||_{L^2({\mathbb R})} \leq  2^\frac{{\nu-1}}{2} ||u||_{H^{\nu}({\mathbb R})}
 \leq  \frac{1}{\lambda^\nu}\cdot 2^\frac{{\nu-1}}{2} |u|_{J_L^{\nu,\lambda}({\mathbb R})} ~\forall ~0<\lambda\leq 1.
\end{split}
\end{equation*}
Therefore, we obtain the norm equivalence of $||u||_{H^{\nu}({\mathbb R})}$ and $|u|_{J_L^{\nu,\lambda}({\mathbb R})}$, moreover
\begin{equation}\label{2.4}
\begin{split}
|u|_{H^{\nu,\lambda}({\mathbb R})}=|u|_{J_L^{\nu,\lambda}({\mathbb R})}
\geq \min\left\{ 1, \lambda^\nu  \right \}||u||_{{H^{\nu}}({\mathbb R})}~~~\forall ~\lambda>0.
\end{split}
\end{equation}
From  \cite{Ervin:05}, we obtain
$||u||_{L^2({\mathbb R})}\leq C|u|_{H^{\nu}({\mathbb R})}$
and
$$ |u|_{H^{\nu}({\mathbb R})}\leq ||u||_{H^{\nu}({\mathbb R})}
\leq \left(1+C^2 \right)^{1/2} |u|_{H^{\nu}({\mathbb R})},$$
which means that we have norm equivalence of $||u||_{H^{\nu}({\mathbb R})}$ and $|u|_{H^{\nu}({\mathbb R})}$ or  $|u||_{J_L^{\nu,\lambda}({\mathbb R})}$.
The similar arguments can be performed as  stated above with $0<\nu\leq 1$, we omit it here.
The proof is completed.
\end{proof}

\subsection{Coercive and continuous of bilinear form}\label{section2.2}
Assume that $u$ is a sufficiently smooth function and $v\in C_0^\infty(\Omega)$, we obtain
\begin{equation*}
\begin{split}
\left(-\left( {_{a}}\mathbb{D}_x^{\alpha,\lambda}+{ _{x}}\mathbb{D}_{b}^{\alpha,\lambda} \right)u,v\right)
&=\left(-\left( {_{a}}D_x^{\alpha,\lambda}+{ _{x}}D_{b}^{\alpha,\lambda} \right)u,v\right)+2\lambda^\alpha(u,v)\\
&=-2\left( {_{a}}D_x^{\alpha/2,\lambda} u,{ _{x}}D_{b}^{\alpha/2,\lambda}v\right)+2\lambda^\alpha(u,v).
\end{split}
\end{equation*}
Thus, we can  define the associated bilinear form $b$: $H_0^{\alpha/2}(\Omega)\times H_0^{\alpha/2}(\Omega)\rightarrow \mathbb R$ as
\begin{equation}\label{2.5}
\begin{split}
b(u,v)
&=-2\kappa_{\alpha}\left( {_{a}}D_x^{\alpha/2,\lambda} u,{ _{x}}D_{b}^{\alpha/2,\lambda}v\right)+2\kappa_{\alpha}\lambda^\alpha(u,v)+\sigma (u,v).
\end{split}
\end{equation}
\begin{definition}\label{definition2.01}
Define the spaces
$J_{L,0}^{\nu,\lambda}(\Omega)$, $J_{R,0}^{\nu,\lambda}(\Omega)$, $H_0^{\nu,\lambda}(\Omega)$,  $H_0^{\nu}(\Omega)$
as the closures of  $C_0^{\infty}(\Omega)$ under their respective norms.
\end{definition}

 For a  simple and intuitive derivation, we denote
\begin{equation}\label{2.06}
  a \circ |u|^2_{H^{\nu,\lambda}({\mathbb R})}
:=\int_{\mathbb R} a \cdot (\lambda^2+\omega^2)^{\nu}|\widehat{u}|^2 d\omega.
\end{equation}

\begin{lemma}\label{lemma2.9}
Let $\nu>0$, $\lambda>0$, $u \in H_0^{\nu/2}(\Omega)$. Then
\begin{equation*}
\begin{split}
\left( { _{a}}D_x^{\nu/2,\lambda}u, {_{x}}D_{b}^{\nu/2,\lambda}u \right)
 =\cos(\nu \theta) \circ |u|^2_{H^{\nu/2,\lambda}({\mathbb R})}
\end{split}
\end{equation*}
with $\theta=\arctan\frac{|\omega|}{\lambda}\in \left[0,\frac{\pi}{2}\right).$
\end{lemma}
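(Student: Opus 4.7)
The plan is to push the left-hand inner product into the Fourier domain via Parseval's formula and Lemma \ref{lemma2.3}, and then extract the $\cos(\nu\theta)$ factor by writing the resulting complex power in polar form. First I would extend $u\in H_0^{\nu/2}(\Omega)$ by zero to all of $\mathbb{R}$. Because $u$ vanishes outside $\Omega=(a,b)$, the truncated Riemann--Liouville expressions collapse to the whole-line tempered derivatives, ${_{a}}D_x^{\nu/2,\lambda}u={_{-\infty}}D_x^{\nu/2,\lambda}u$ and ${_{x}}D_b^{\nu/2,\lambda}u={_{x}}D_\infty^{\nu/2,\lambda}u$. Applying Lemma \ref{lemma2.3} then yields $\mathcal{F}({_{-\infty}}D_x^{\nu/2,\lambda}u)=(\lambda-i\omega)^{\nu/2}\widehat{u}(\omega)$ and $\mathcal{F}({_{x}}D_\infty^{\nu/2,\lambda}u)=(\lambda+i\omega)^{\nu/2}\widehat{u}(\omega)$, so Parseval's formula (\ref{2.1}) rewrites the $L^2$ inner product as
$$\int_{\mathbb R}(\lambda-i\omega)^{\nu/2}\widehat{u}(\omega)\,\overline{(\lambda+i\omega)^{\nu/2}\widehat{u}(\omega)}\,d\omega=\int_{\mathbb R}(\lambda-i\omega)^{\nu}\,|\widehat{u}(\omega)|^2\,d\omega,$$
where I have used $\overline{(\lambda+i\omega)^{\nu/2}}=(\lambda-i\omega)^{\nu/2}$ on the principal branch, which is legitimate because $\mathrm{Re}(\lambda\pm i\omega)=\lambda>0$.

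Next I would put $\lambda-i\omega$ in polar form, $\lambda-i\omega=\sqrt{\lambda^2+\omega^2}\,e^{-i\theta^\star}$ with $\theta^\star=\arctan(\omega/\lambda)\in(-\pi/2,\pi/2)$, so that
$$(\lambda-i\omega)^{\nu}=(\lambda^2+\omega^2)^{\nu/2}\bigl(\cos(\nu\theta^\star)-i\sin(\nu\theta^\star)\bigr).$$
For real-valued $u$ the modulus $|\widehat{u}(\omega)|^2$ is even in $\omega$ (since $\widehat{u}(-\omega)=\overline{\widehat{u}(\omega)}$), and $(\lambda^2+\omega^2)^{\nu/2}$ is even, while $\sin(\nu\theta^\star)$ is odd; hence the imaginary part of the integrand is an odd function and integrates to zero. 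Because $\cos$ is even we have $\cos(\nu\theta^\star)=\cos(\nu|\theta^\star|)=\cos(\nu\theta)$ with $\theta=\arctan(|\omega|/\lambda)\in[0,\pi/2)$ as in the statement, and what remains is exactly $\int_{\mathbb R}\cos(\nu\theta)(\lambda^2+\omega^2)^{\nu/2}|\widehat{u}|^2\,d\omega$, which is $\cos(\nu\theta)\circ|u|^2_{H^{\nu/2,\lambda}({\mathbb R})}$ by the notation (\ref{2.06}).

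I expect the two delicate points to be (i) fixing the principal branch of the complex power $(\lambda\pm i\omega)^{\nu/2}$ so that Lemma \ref{lemma2.3}, complex conjugation, and the polar representation are all mutually compatible, and (ii) justifying that the imaginary contribution vanishes by the parity argument above; neither is hard once the real-valued convention on $u$ (implicit in the paper's real $L^2$ setting and in the coercivity application that follows in Section \ref{section2.2}) is acknowledged.
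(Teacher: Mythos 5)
Your proposal is correct and follows essentially the same route as the paper: Lemma \ref{lemma2.3} plus Parseval's formula to pass to the Fourier side, then the polar form of $\lambda\pm i\omega$ to extract the factor $\cos(\nu\theta)$. You are slightly more explicit than the paper about why the imaginary contribution vanishes (evenness of $|\widehat{u}(\omega)|^2$ for real-valued $u$), a point the paper handles only implicitly by splitting the integral at $\omega=0$; otherwise the two arguments coincide.
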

\begin{proof}
Let overbar denotes complex conjugate. It is easy to get
\begin{equation*}
\overline{(\lambda+i\omega)^{\nu/2}}=\left\{ \begin{array}{lll}
\displaystyle
\overline{(\lambda-i\omega)^{\nu/2}}\,e^{-i\nu\theta},~~~~~~\omega\geq0; \\
\overline{(\lambda-i\omega)^{\nu/2}}\,e^{i\nu\theta},~~~~~~~~\omega< 0
\end{array}
 \right.
\end{equation*}
with
$\theta=\arctan\frac{|\omega|}{\lambda}\in \left[0,\frac{\pi}{2}\right).$
It should be noted that  $\theta=\frac{\pi}{2}$ if $\lambda=0$, see in \cite{Ervin:05}.

Using Lemma \ref{lemma2.3} and  Parseval's Formula (\ref{2.1}), we have
\begin{equation*}
\begin{split}
&\left( { _{a}}D_x^{\nu/2,\lambda}u, {_{x}}D_{b}^{\nu/2,\lambda}u \right)\\
&= \left( (\lambda-i\omega)^{\nu/2}\widehat{u}, (\lambda+i\omega)^{\nu/2}\widehat{u}\right)\\
&=\int_{-\infty}^0(\lambda-i\omega)^{\nu/2}\widehat{u} \, \overline{(\lambda+i\omega)^{\nu/2}\widehat{u}}\,d\omega
  +\int_0^{\infty}(\lambda-i\omega)^{\nu/2}\widehat{u} \, \overline{(\lambda+i\omega)^{\nu/2}\widehat{u}}\,d\omega\\
&=\int_{\mathbb R} \cos(\nu \theta) \cdot (\lambda^2+\omega^2)^{\nu/2}|\widehat{u}|^2 d\omega
=\cos(\nu \theta) \circ |u|^2_{H^{\nu/2,\lambda}({\mathbb R})}.
\end{split}
\end{equation*}
The proof is completed.
\end{proof}
\begin{lemma}\label{lemma2.10}
Let $\nu>0$, $\lambda>0$, $u \in  H_0^{\nu/2}(\Omega)$. Then
\begin{equation*}
\begin{split}
||u||^2_{L^2(\Omega)}
 =\frac{\cos^\nu( \theta)}{\lambda^\nu}\circ |u|^2_{H^{\nu/2,\lambda}({\mathbb R})}
\end{split}
\end{equation*}
with $\theta=\arctan\frac{|\omega|}{\lambda}\in \left[0,\frac{\pi}{2}\right).$
\end{lemma}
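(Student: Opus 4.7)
The plan is to verify the identity by direct computation, exploiting the geometric meaning of $\theta=\arctan(|\omega|/\lambda)$. Since this angle corresponds to a right triangle with legs $\lambda$ and $|\omega|$ and hypotenuse $\sqrt{\lambda^{2}+\omega^{2}}$, we immediately get
\begin{equation*}
\cos\theta=\frac{\lambda}{\sqrt{\lambda^{2}+\omega^{2}}},\qquad\text{hence}\qquad\cos^{\nu}\theta=\frac{\lambda^{\nu}}{(\lambda^{2}+\omega^{2})^{\nu/2}}.
\end{equation*}
This is the one observation doing all the work, and it is what makes the statement essentially a trigonometric rewriting rather than a genuine estimate.

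Next I would unpack the right-hand side using the convention (\ref{2.06}) applied with $\nu$ replaced by $\nu/2$, namely
\begin{equation*}
\frac{\cos^{\nu}\theta}{\lambda^{\nu}}\circ|u|^{2}_{H^{\nu/2,\lambda}({\mathbb R})}=\int_{\mathbb R}\frac{\cos^{\nu}\theta}{\lambda^{\nu}}\,(\lambda^{2}+\omega^{2})^{\nu/2}|\widehat u|^{2}\,d\omega.
\end{equation*}
Substituting the expression for $\cos^{\nu}\theta$ from the previous step collapses the integrand to $|\widehat u|^{2}$, so the right-hand side equals $\|\widehat u\|^{2}_{L^{2}({\mathbb R})}$.

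To finish, I would invoke Parseval's formula (\ref{2.1}) (with $v=u$) to conclude $\|\widehat u\|^{2}_{L^{2}({\mathbb R})}=\|u\|^{2}_{L^{2}({\mathbb R})}$, and then use the fact that $u\in H_{0}^{\nu/2}(\Omega)$ is, by Definition \ref{definition2.01}, the zero-extension of an $\Omega$-supported function, so $\|u\|^{2}_{L^{2}({\mathbb R})}=\|u\|^{2}_{L^{2}(\Omega)}$. Chaining these three equalities yields the claimed identity.

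There is no real obstacle in this proof: the only thing to be careful about is the case $\omega=0$ (where $\theta=0$ and the identity is trivial) and the convention at $\omega=\pm\infty$ where $\theta\to\pi/2$; since $\cos\theta\to 0$ and $(\lambda^{2}+\omega^{2})^{\nu/2}\to\infty$ at matching rates, the factor remains identically $1$ and no limiting argument is needed. One should also note the compatibility with Definition \ref{definition2.5}: when $\lambda=0$ the formula degenerates (the prefactor $\lambda^{-\nu}$ is undefined), which is why the statement explicitly requires $\lambda>0$, mirroring the footnote in Lemma \ref{lemma2.9}.
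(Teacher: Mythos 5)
Your proof is correct and is essentially the paper's own argument read in the reverse direction: both rest on the single identity $\cos^2\theta=\lambda^2/(\lambda^2+\omega^2)$ (the paper writes it as $1/(1+\tan^2\theta)$) combined with Parseval's formula, the paper starting from $\|u\|^2_{L^2}$ and inserting the factor $\lambda^{-\nu}\cos^{\nu}\theta\cdot(\lambda^2+\omega^2)^{\nu/2}=1$, while you start from the right-hand side and cancel it. Your extra remarks on the zero-extension and the $\lambda>0$ requirement are sensible but do not change the substance.
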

\begin{proof}
From  Parseval's Formula (\ref{2.1}), it yields
\begin{equation*}
\begin{split}
||u||^2_{L^2(\Omega)}
&=\int_{\mathbb R}|\widehat{u}|^2 d\omega
= \frac{1}{\lambda^\nu}\int_{\mathbb R}\left(\frac{\lambda^2}{\lambda^2+\omega^2}\right)^{\nu/2} \cdot (\lambda^2+\omega^2)^{\nu/2}|\widehat{u}|^2 d\omega\\
&= \frac{1}{\lambda^\nu}\cos^\nu( \theta)\circ |u|^2_{H^{\nu/2,\lambda}({\mathbb R})},
\end{split}
\end{equation*}
where we use
\begin{equation*}
   \frac{\lambda^2}{\lambda^2+\omega^2}=\frac{1}{1+\tan^2\theta}=\cos^2( \theta).
\end{equation*}
The proof is completed.
\end{proof}
\begin{lemma}\label{lemma2.11}
Let $1<\alpha<2$, $\lambda>0$, $u \in   H_0^{\alpha/2}(\Omega)$ . Then
\begin{equation*}
\begin{split}
-2\left( {_{a}}D_x^{\alpha/2,\lambda} u,{ _{x}}D_{b}^{\alpha/2,\lambda}u\right)+2\lambda^\alpha(u,u)
 = 2 \left[\cos^\alpha( \theta)-\cos(\alpha \theta)\right]\circ |u|^2_{H^{\alpha/2,\lambda}({\mathbb R})}\geq 0
\end{split}
\end{equation*}
with $\theta=\arctan\frac{|\omega|}{\lambda}\in \left[0,\frac{\pi}{2}\right).$
\end{lemma}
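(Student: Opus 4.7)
The plan is to combine Lemmas 2.9 and 2.10 (applied with $\nu=\alpha$) to rewrite both terms on the left in the common form $(\cdot)\circ |u|^2_{H^{\alpha/2,\lambda}(\mathbb R)}$, which immediately yields the claimed identity, and then to verify that the resulting integrand is pointwise non-negative in $\omega$.

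First I would invoke Lemma \ref{lemma2.9} with $\nu=\alpha$ to obtain
\begin{equation*}
-2\bigl( {_{a}}D_x^{\alpha/2,\lambda} u,{ _{x}}D_{b}^{\alpha/2,\lambda}u\bigr)
   = -2\cos(\alpha\theta)\circ |u|^2_{H^{\alpha/2,\lambda}(\mathbb R)}.
\end{equation*}
Next, Lemma \ref{lemma2.10} with $\nu=\alpha$ gives
\begin{equation*}
2\lambda^{\alpha}\|u\|^2_{L^2(\Omega)}
   = 2\lambda^{\alpha}\cdot \frac{\cos^{\alpha}(\theta)}{\lambda^{\alpha}}\circ |u|^2_{H^{\alpha/2,\lambda}(\mathbb R)}
   = 2\cos^{\alpha}(\theta)\circ |u|^2_{H^{\alpha/2,\lambda}(\mathbb R)}.
\end{equation*}
Adding these and recalling the notation \eqref{2.06} produces the stated identity
\begin{equation*}
-2\bigl( {_{a}}D_x^{\alpha/2,\lambda} u,{ _{x}}D_{b}^{\alpha/2,\lambda}u\bigr)+2\lambda^{\alpha}(u,u)
 = 2\bigl[\cos^{\alpha}(\theta)-\cos(\alpha\theta)\bigr]\circ |u|^2_{H^{\alpha/2,\lambda}(\mathbb R)}.
\end{equation*}

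It remains to establish the non-negativity. Since $|u|^2_{H^{\alpha/2,\lambda}(\mathbb R)}$ is by definition an integral of $(\lambda^2+\omega^2)^{\alpha/2}|\widehat u|^2\ge 0$ weighted by the scalar $2[\cos^{\alpha}(\theta)-\cos(\alpha\theta)]$, it suffices to show the pointwise bound
\begin{equation*}
\cos^{\alpha}(\theta)\;\ge\;\cos(\alpha\theta),\qquad \theta=\arctan(|\omega|/\lambda)\in[0,\pi/2),\quad \alpha\in(1,2).
\end{equation*}
I would split this into two regimes. If $\alpha\theta\ge \pi/2$, then $\cos(\alpha\theta)\le 0$ while $\cos^{\alpha}(\theta)>0$, so the inequality is immediate. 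If $\alpha\theta<\pi/2$, both sides are positive, and I would take logarithms, setting $g(\theta)=\alpha\log\cos\theta-\log\cos(\alpha\theta)$. Then $g(0)=0$ and $g'(\theta)=\alpha\bigl[\tan(\alpha\theta)-\tan\theta\bigr]\ge 0$ because $\tan$ is strictly increasing on $[0,\pi/2)$ and $\alpha\theta>\theta$; hence $g\ge 0$ on the relevant range, yielding $\cos^{\alpha}(\theta)\ge\cos(\alpha\theta)$.

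The only mildly subtle part is verifying the pointwise inequality; the identity itself is a direct bookkeeping application of the two preceding lemmas. I would expect the tangent-monotonicity argument above to be the cleanest route, though one could equivalently derive the inequality from Jensen/convexity applied to $-\log\cos$, which is convex on $[0,\pi/2)$.
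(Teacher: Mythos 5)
Your proposal is correct, and its first half is identical to the paper's: both derive the identity by applying Lemma \ref{lemma2.9} and Lemma \ref{lemma2.10} with $\nu=\alpha$ and adding the two representations. The only divergence is in how the pointwise inequality $\cos^{\alpha}(\theta)\ge\cos(\alpha\theta)$ is established. The paper differentiates $f(\theta)=\cos^{\alpha}(\theta)-\cos(\alpha\theta)$ directly, bounds $f'$ below by $\alpha\sin(\theta)\,g(\theta)$ with $g(\theta)=\cos((\alpha-1)\theta)-\cos^{\alpha-1}(\theta)$, and then runs a second monotonicity argument on $g$; this works on all of $[0,\pi/2)$ in one pass but involves a nested auxiliary function. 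You instead take logarithms where both sides are positive, observe $g'(\theta)=\alpha[\tan(\alpha\theta)-\tan\theta]\ge 0$ since $\tan$ is increasing and $\alpha\theta>\theta$, and dispose of the regime $\alpha\theta\ge\pi/2$ trivially by a sign argument. Your route is shorter and the tangent comparison (equivalently, convexity of $-\log\cos$ together with $(-\log\cos)(0)=0$) is cleaner, at the cost of the case split at $\alpha\theta=\pi/2$; both arguments are sound, and both correctly reduce the nonnegativity of the bilinear expression to the nonnegativity of the scalar multiplier inside the $\circ$ notation of \eqref{2.06}.
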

\begin{proof}
According to Lemmas \ref{lemma2.9} and \ref{lemma2.10}, there exists
\begin{equation*}
\begin{split}
-2\left( {_{a}}D_x^{\alpha/2,\lambda} u,{ _{x}}D_{b}^{\alpha/2,\lambda}u\right)+2\lambda^\alpha(u,u)
=2 \left[\cos^\alpha( \theta)-\cos(\alpha \theta)\right]\circ |u|^2_{H^{\alpha/2,\lambda}({\mathbb R})}.
\end{split}
\end{equation*}
Let
\begin{equation}\label{2.6}
  f\left(  \theta\right):=\cos^\alpha( \theta)-\cos(\alpha \theta).
\end{equation}
Obviously, $f(\theta)=0$  if $\theta=0$. Next we prove $f(\theta)\geq 0$ if $\theta \in \left(0, \frac{\pi}{2}\right)$.
Since
\begin{equation*}
\begin{split}
  f'\left( \theta\right)
  =\alpha\left[ \sin(\alpha\theta)-\cos^{\alpha-1}(\theta)\sin(\theta) \right]
  \geq \alpha\sin(\theta)g(\theta)
\end{split}
\end{equation*}
with
$$g(\theta)=\cos((\alpha-1) \theta)-\cos^{\alpha-1}( \theta),$$
and
$$g'(\theta)=(\alpha-1)\sin(\theta)\left[\cos^{\alpha-2}(\theta)-   \frac{\sin((\alpha-1)\theta)}{\sin(\theta)}  \right]>0 ~{\rm with}~~1<\alpha<2. $$
Hence, we know that $f\left(  \theta\right)$ in (\ref{2.6}) is strictly increasing in $\left[0, \frac{\pi}{2}\right)$.
The proof is completed.
\end{proof}

\begin{lemma}\label{lemma2.12}
The bilinear form $b(\cdot,\cdot)$ is coercive over $H_0^{\alpha/2}(\Omega)$, i.e., there exists a constant $C_0$ such that
$$b(u,u)\geq C_0||u||^2_{{H_0^{\alpha/2}}(\Omega)},$$
where $\lambda>0$, $1<\alpha<2$ and
$$C_0=\min\left\{ 1, \lambda^{\alpha}  \right \}\cdot \min\left\{ 2\kappa_{\alpha}\left[\frac{1}{2^\alpha}-\cos\left(\frac{\alpha \pi}{3}\right)\right],  \frac{\sigma }{(2\lambda)^\alpha}\right \}>0.$$
\end{lemma}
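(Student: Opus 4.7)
The plan is to expand $b(u,u)$ using its definition, rewrite every term as an integral against the weight $(\lambda^2+\omega^2)^{\alpha/2}|\widehat u|^2$ via the $\circ$-notation, and then perform a case split on the angle $\theta=\arctan(|\omega|/\lambda)$ to bound the integrand from below uniformly.

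First I would set $v=u$ in \eqref{2.5} and apply Lemma \ref{lemma2.11} directly to the combination $-2\kappa_\alpha({_a}D_x^{\alpha/2,\lambda}u,{_x}D_b^{\alpha/2,\lambda}u)+2\kappa_\alpha\lambda^\alpha(u,u)$, which produces $2\kappa_\alpha[\cos^\alpha(\theta)-\cos(\alpha\theta)]\circ|u|^2_{H^{\alpha/2,\lambda}(\mathbb{R})}$. For the zero-order term $\sigma(u,u)=\sigma\|u\|^2_{L^2(\Omega)}$ I would invoke Lemma \ref{lemma2.10} to write it as $\sigma\cos^\alpha(\theta)/\lambda^\alpha\circ|u|^2_{H^{\alpha/2,\lambda}(\mathbb{R})}$. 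This puts $b(u,u)$ in the form
\begin{equation*}
b(u,u)=\Bigl(2\kappa_\alpha\bigl[\cos^\alpha(\theta)-\cos(\alpha\theta)\bigr]+\frac{\sigma\cos^\alpha(\theta)}{\lambda^\alpha}\Bigr)\circ|u|^2_{H^{\alpha/2,\lambda}(\mathbb{R})}.
\end{equation*}

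Next I would split the integration domain at $\theta=\pi/3$. On $[\pi/3,\pi/2)$, the monotonicity of $f(\theta)=\cos^\alpha(\theta)-\cos(\alpha\theta)$ established in the proof of Lemma \ref{lemma2.11} gives $f(\theta)\geq f(\pi/3)=2^{-\alpha}-\cos(\alpha\pi/3)$, which is strictly positive for $\alpha\in(1,2)$; discarding the nonnegative $\sigma$ term leaves a factor at least $2\kappa_\alpha[2^{-\alpha}-\cos(\alpha\pi/3)]$. On $[0,\pi/3]$ we have $\cos(\theta)\geq 1/2$, so $\cos^\alpha(\theta)/\lambda^\alpha\geq 1/(2\lambda)^\alpha$; discarding the nonnegative $f(\theta)$ piece leaves a factor at least $\sigma/(2\lambda)^\alpha$. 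Taking the minimum of the two factors yields
\begin{equation*}
b(u,u)\geq \min\Bigl\{2\kappa_\alpha\bigl[2^{-\alpha}-\cos(\alpha\pi/3)\bigr],\,\sigma/(2\lambda)^\alpha\Bigr\}\,|u|^2_{H^{\alpha/2,\lambda}(\mathbb{R})}.
\end{equation*}

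Finally, I would apply inequality \eqref{2.4} from Theorem \ref{theorem2.8} (with $\nu=\alpha/2$, squared) to bound $|u|^2_{H^{\alpha/2,\lambda}(\mathbb{R})}\geq\min\{1,\lambda^\alpha\}\|u\|^2_{H^{\alpha/2}(\mathbb{R})}$, and then identify $\|u\|_{H^{\alpha/2}(\mathbb{R})}$ with $\|u\|_{H_0^{\alpha/2}(\Omega)}$ using the zero-extension convention from Definition \ref{definition2.01}. Multiplying the two minima produces exactly the stated $C_0$.

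The main obstacle is the case split at $\theta=\pi/3$: one must recognise that neither the fractional-derivative part nor the $\sigma$ mass part alone is uniformly bounded below over $\theta\in[0,\pi/2)$ (the former vanishes at $\theta=0$, the latter at $\theta\to\pi/2$), so they must be combined with a well-chosen cut-off, and one must verify $2^{-\alpha}-\cos(\alpha\pi/3)>0$ throughout $(1,2)$ to keep $C_0$ strictly positive. The remaining $\lambda$-scaling and passage from the weighted to the unweighted Sobolev norm are routine once \eqref{2.4} is in hand.
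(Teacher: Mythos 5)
Your proposal is correct and follows essentially the same route as the paper: expand $b(u,u)$ via Lemmas \ref{lemma2.11} and \ref{lemma2.10} into a single $\circ$-weighted integral, split at $\theta=\pi/3$ using the monotonicity of $f(\theta)=\cos^\alpha(\theta)-\cos(\alpha\theta)$ on one side and $\cos(\theta)\ge 1/2$ on the other, and finish with \eqref{2.4} (squared, at $\nu=\alpha/2$) to pass to the unweighted norm. The resulting constant matches the stated $C_0$ exactly.
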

\begin{proof}
According to (\ref{2.5}) and   Lemmas \ref{lemma2.11}, \ref{lemma2.10}, we have
\begin{equation*}
\begin{split}
b(u,u)=2\kappa_{\alpha} \left[\cos^\alpha( \theta)-\cos(\alpha \theta)+ \frac{\sigma }{2\kappa_{\alpha}\lambda^\alpha}\cos^\alpha( \theta)\right]\circ |u|^2_{H^{\alpha/2,\lambda}({\mathbb R})}.
\end{split}
\end{equation*}
From  (\ref{2.6}), we know that $f\left(  \theta\right)=\cos^\alpha( \theta)-\cos(\alpha \theta)\geq 0$  is strictly increasing in $\left[0, \frac{\pi}{2}\right)$,
which leads to
\begin{equation*}
\begin{split}
2\kappa_{\alpha}\left[\cos^\alpha( \theta)\!-\!\cos(\alpha \theta)+ \frac{\sigma }{2\kappa_{\alpha}\lambda^\alpha}\cos^\alpha( \theta)\right]
\geq  \frac{\sigma }{\lambda^\alpha}\cos^\alpha\left( \frac{\pi}{3}\right)= \frac{\sigma }{(2\lambda)^\alpha}>0~\forall \theta \in\!\left[0,\frac{\pi}{3}\right],
\end{split}
\end{equation*}
and
\begin{equation*}
\begin{split}
2\kappa_{\alpha}\left[\cos^\alpha( \theta)\!-\!\cos(\alpha \theta)+ \frac{\sigma }{2\kappa_{\alpha}\lambda^\alpha}\cos^\alpha( \theta)\right]
\geq  2\kappa_{\alpha}\left[\frac{1}{2^\alpha}-\cos\left(\frac{\alpha \pi}{3}\right)\right]>0~\forall \theta \in\!\left[\frac{\pi}{3},\frac{\pi}{2}\right).
\end{split}
\end{equation*}
According to (\ref{2.4}) and the above equations, there exists
\begin{equation*}
\begin{split}
b(u,u)
&\geq \min\left\{ 2\kappa_{\alpha}\left[\frac{1}{2^\alpha}-\cos\left(\frac{\alpha \pi}{3}\right)\right],  \frac{\sigma }{(2\lambda)^\alpha}\right \}\cdot |u|^2_{H^{\alpha/2,\lambda}({\mathbb R})}\\
&\geq \min\left\{ 2\kappa_{\alpha}\left[\frac{1}{2^\alpha}-\cos\left(\frac{\alpha \pi}{3}\right)\right],  \frac{\sigma }{(2\lambda)^\alpha}\right \}
\min\left\{ 1, \lambda^{\alpha}  \right \}||u||^2_{{H_0^{\alpha/2}}(\Omega)}.
\end{split}
\end{equation*}
The proof is completed.
\end{proof}

\begin{lemma}\label{lemma2.13}
The bilinear form $b(\cdot,\cdot)$ is   continuous on  $H_0^{\alpha/2}(\Omega)\times H_0^{\alpha/2}(\Omega)$ with  $1<\alpha<2$, i.e., there exists a constant $C_1$ such that
$$|b(u,v)|\leq C_1||u||_{{H_0^{\alpha/2}}(\Omega)}||v||_{{H_0^{\alpha/2}}(\Omega)}.$$
\end{lemma}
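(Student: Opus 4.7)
The approach is to split $b(u,v)$ into its three constituent pieces and apply the Cauchy--Schwarz inequality to each, then convert the resulting bounds, which naturally live in the tempered seminorms $|\cdot|_{J_L^{\alpha/2,\lambda}}$ and $|\cdot|_{J_R^{\alpha/2,\lambda}}$, into bounds in the standard norm $\|\cdot\|_{H_0^{\alpha/2}(\Omega)}$ via Theorem \ref{theorem2.8}.

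Writing $b(u,v) = -2\kappa_\alpha\bigl({_{a}}D_x^{\alpha/2,\lambda}u, {_{x}}D_b^{\alpha/2,\lambda}v\bigr) + 2\kappa_\alpha\lambda^\alpha(u,v) + \sigma(u,v)$, I would first apply Cauchy--Schwarz in $L^2$ to the principal term to obtain
\begin{equation*}
\bigl|\bigl({_{a}}D_x^{\alpha/2,\lambda}u,{_{x}}D_b^{\alpha/2,\lambda}v\bigr)\bigr|
\leq \|{_{a}}D_x^{\alpha/2,\lambda}u\|_{L^2({\mathbb R})}\,\|{_{x}}D_b^{\alpha/2,\lambda}v\|_{L^2({\mathbb R})}
= |u|_{J_L^{\alpha/2,\lambda}({\mathbb R})}\,|v|_{J_R^{\alpha/2,\lambda}({\mathbb R})},
\end{equation*}
using Definitions \ref{definition2.1} and \ref{definition2.2}. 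For the lower-order pieces, Cauchy--Schwarz in $L^2(\Omega)$ immediately gives $|(u,v)|\leq \|u\|_{L^2(\Omega)}\|v\|_{L^2(\Omega)}$.

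Next I would invoke Theorem \ref{theorem2.8}, which identifies the seminorms $|\cdot|_{J_L^{\alpha/2,\lambda}({\mathbb R})}$, $|\cdot|_{J_R^{\alpha/2,\lambda}({\mathbb R})}$ and $|\cdot|_{H^{\alpha/2,\lambda}({\mathbb R})}$ and, more importantly, establishes the two-sided comparison with $|\cdot|_{H^{\alpha/2}({\mathbb R})}$ (with constants depending on $\lambda$ and $\alpha$). Combined with the norm equivalence $\|u\|_{H^{\alpha/2}({\mathbb R})}\asymp |u|_{H^{\alpha/2}({\mathbb R})}$ for $u\in C_0^\infty(\Omega)$ (Fractional Poincar\'e--Friedrichs, Lemma \ref{lemma2.7}, adapted via the usual extension-by-zero identification $H_0^{\alpha/2}(\Omega)\hookrightarrow H^{\alpha/2}({\mathbb R})$), this converts the above bound into
\begin{equation*}
\bigl|\bigl({_{a}}D_x^{\alpha/2,\lambda}u,{_{x}}D_b^{\alpha/2,\lambda}v\bigr)\bigr|
\leq C\,\|u\|_{H_0^{\alpha/2}(\Omega)}\,\|v\|_{H_0^{\alpha/2}(\Omega)},
\end{equation*}
with $C$ depending only on $\alpha$ and $\lambda$. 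The remaining two pieces are controlled by $\|u\|_{L^2(\Omega)}\|v\|_{L^2(\Omega)}\leq \|u\|_{H_0^{\alpha/2}(\Omega)}\|v\|_{H_0^{\alpha/2}(\Omega)}$.

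Assembling the three estimates with their scalar prefactors $2\kappa_\alpha$, $2\kappa_\alpha\lambda^\alpha$, and $\sigma$ yields the continuity constant
\begin{equation*}
C_1 = 2\kappa_\alpha\,C_2^{2} + 2\kappa_\alpha\lambda^\alpha + \sigma,
\end{equation*}
where $C_2$ is the constant from Theorem \ref{theorem2.8} (appropriately absorbing the factor $\max\{1,\lambda^{-\alpha}\}$ from (\ref{2.4})). I do not expect any genuine obstacle here: the proof is an exercise in Cauchy--Schwarz followed by the seminorm identifications already established. The only point that deserves a careful word is the passage from the $H^{\alpha/2,\lambda}({\mathbb R})$-seminorm to the $\|\cdot\|_{H_0^{\alpha/2}(\Omega)}$-norm via the zero-extension and the Fourier-side comparison in (\ref{2.3})--(\ref{2.4}), which must be handled in both regimes $\alpha/2<1$ (our case, since $1<\alpha<2$) and $\lambda\gtrless 1$ to track the constants; this is routine given the results already in hand.
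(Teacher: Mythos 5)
Your proposal is correct and follows essentially the same route as the paper: triangle inequality on the three terms of $b(u,v)$, Cauchy--Schwarz giving $|u|_{J_L^{\alpha/2,\lambda}}\,|v|_{J_R^{\alpha/2,\lambda}}$ for the principal term and $\|u\|_{L^2}\|v\|_{L^2}$ for the lower-order terms, then the seminorm identifications and equivalences of Theorem \ref{theorem2.8} to land in $\|\cdot\|_{H_0^{\alpha/2}(\Omega)}$. The paper's proof is exactly this three-line chain (it is merely less explicit about the final constant), so no further comment is needed.
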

\begin{proof}
From (\ref{1.3}),  (\ref{2.5}) and Theorem \ref{theorem2.8}, we have
\begin{equation*}
\begin{split}
|b(u,v)|
&\leq 2\kappa_{\alpha}\left|\left( {_{a}}D_x^{\alpha/2,\lambda} u,{ _{x}}D_{b}^{\alpha/2,\lambda}v\right)\right|+\left(2\kappa_{\alpha}\lambda^\alpha+\sigma \right)\left|(u,v)\right|\\
&\leq 2\kappa_{\alpha} |u|_{J_{L,0}^{\alpha/2,\lambda}(\Omega)} |v|_{J_{L,0}^{\alpha/2,\lambda}(\Omega)}+\left(2\kappa_{\alpha}\lambda^\alpha+\sigma \right)||u||_{L^2(\Omega)}||v||_{L^2(\Omega)}\\
&\leq C_1||u||_{{H_0^{\alpha/2}}(\Omega)}||v||_{{H_0^{\alpha/2}}(\Omega)}.
\end{split}
\end{equation*}
The proof is completed.
\end{proof}

\section{Finite element  method for  tempered fractional  problem}
In this section a theoretical framework for the Galerkin finite element approximation to the time-dependent tempered fractional  problem is presented,
which does not require for the fractional  regularity assumption \cite{Celik:17,Deng:16,Ervin:05}.
The proposed method is based on a Crank-Nicolson scheme on time and Galerkin finite element in space for (\ref{1.6}).
This section is devoted to the stability analysis of the time-stepping scheme and the detailed error analysis of semidiscretization on time
and of full discretization.
\subsection{Stability analysis and error estimates for the semi-discrete scheme}
Let $T>0$, $\Omega=(a,b)$ and $t_n=n\tau$, $n=0,1,\ldots N$, where $\tau=\frac{T}{N}$ is the time steplength.
We set $u^n$ or $u^n(x)$  as an approximation of $u(x,t_n)$ and $f^{n-1/2}$  as an approximation of $f(x,t_{n-1/2})$ and denote $\underline{u}^n=\frac{1}{2}(u^n+u^{n-1})$.
We now turn to the Crank-Nicolson scheme, which will produce a second order accurate method in time, i.e.,
\begin{equation*}
  \frac{\partial u(x,t)}{\partial t}\Big|_{t=t_{n-1/2}}=\overline{\partial} u^n+r_\tau^n(x)
\end{equation*}
with
\begin{equation}\label{3.1}
|r_\tau^n(x)|\leq C_u \tau^2~~{\rm and }~~\overline{\partial} u^n=\frac{u^n(x)-u^{n-1}(x)}{\tau},
\end{equation}
where $C_u$ is a constant depending only on $u$.
Then we get the following variational formulation  of (\ref{1.6}): Find $u^n \in H_0^{\alpha/2}(\Omega)$ such that
\begin{equation}\label{3.2}
(\overline{\partial} u^n,\chi)+b(\underline{u}^n,\chi)=(f^{n-1/2},\chi) ~~\forall \chi \in H_0^{\alpha/2}(\Omega).
\end{equation}

\begin{theorem}\label{theorem3.1}
  The semi-discretized scheme  (\ref{3.2}) is unconditionally stable.
\end{theorem}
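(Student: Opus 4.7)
The plan is to derive a discrete energy estimate by testing the variational equation (\ref{3.2}) with the natural choice $\chi = \underline{u}^n$. The first step is to recognize the telescoping identity
\begin{equation*}
(\overline{\partial} u^n, \underline{u}^n) = \frac{1}{\tau}\bigl(u^n - u^{n-1},\tfrac{1}{2}(u^n + u^{n-1})\bigr) = \frac{1}{2\tau}\bigl(\|u^n\|_{L^2(\Omega)}^2 - \|u^{n-1}\|_{L^2(\Omega)}^2\bigr),
\end{equation*}
which is precisely why the Crank--Nicolson averaging $\underline{u}^n$ is built into the scheme.

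Next I would apply Lemma \ref{lemma2.12} to the spatial term to obtain
\begin{equation*}
b(\underline{u}^n,\underline{u}^n) \geq C_0 \|\underline{u}^n\|_{H_0^{\alpha/2}(\Omega)}^2,
\end{equation*}
and handle the right-hand side via Cauchy--Schwarz and Young's inequality,
\begin{equation*}
|(f^{n-1/2}, \underline{u}^n)| \leq \frac{1}{2 C_0}\|f^{n-1/2}\|_{L^2(\Omega)}^2 + \frac{C_0}{2}\|\underline{u}^n\|_{L^2(\Omega)}^2,
\end{equation*}
where the second term is absorbed into the coercivity lower bound using $\|\cdot\|_{L^2(\Omega)} \leq \|\cdot\|_{H_0^{\alpha/2}(\Omega)}$ (available through Lemma \ref{lemma2.7} and Theorem \ref{theorem2.8}). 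Multiplying by $2\tau$ and rearranging yields the one-step estimate
\begin{equation*}
\|u^n\|_{L^2(\Omega)}^2 - \|u^{n-1}\|_{L^2(\Omega)}^2 + C_0\,\tau\,\|\underline{u}^n\|_{H_0^{\alpha/2}(\Omega)}^2 \leq \frac{\tau}{C_0}\|f^{n-1/2}\|_{L^2(\Omega)}^2.
\end{equation*}

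Summing over $n = 1,\ldots,m$ for arbitrary $1 \leq m \leq N$ telescopes the first two terms and produces the stability bound
\begin{equation*}
\|u^m\|_{L^2(\Omega)}^2 + C_0\,\tau \sum_{n=1}^{m}\|\underline{u}^n\|_{H_0^{\alpha/2}(\Omega)}^2 \leq \|u^0\|_{L^2(\Omega)}^2 + \frac{\tau}{C_0}\sum_{n=1}^{m}\|f^{n-1/2}\|_{L^2(\Omega)}^2,
\end{equation*}
with no restriction relating $\tau$ to any spatial parameter, giving unconditional stability. I do not anticipate a real obstacle here: Lemma \ref{lemma2.12} already delivers coercivity with a constant $C_0$ independent of $\tau$, and the symmetric Crank--Nicolson averaging was designed to make the telescoping exact. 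The only minor decision is whether to absorb the $\underline{u}^n$ piece arising from Young's inequality into the coercive $H_0^{\alpha/2}$ bound or into a separate $L^2$ term, but this is cosmetic and does not affect the conclusion.
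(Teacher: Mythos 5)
Your proof is correct and rests on the same mechanism as the paper's: testing with the Crank--Nicolson average $\underline{u}^n$ so that the time-difference term telescopes into $\frac{1}{2\tau}\bigl(\|u^n\|_{L^2(\Omega)}^2-\|u^{n-1}\|_{L^2(\Omega)}^2\bigr)$ while coercivity of $b$ (Lemma \ref{lemma2.12}) makes the spatial term nonnegative. The only difference is that the paper applies this to the homogeneous perturbation equation for $E^n=\widetilde{u}^n-u^n$, concluding $\|E^n\|_{L^2(\Omega)}\leq\|E^0\|_{L^2(\Omega)}$, whereas you work with the full scheme and control the source term via Young's inequality, yielding a slightly stronger a priori energy bound; both establish unconditional stability.
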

\begin{proof}
Let $\widetilde{u}^n$ be the approximate solution of $u^n$, which is the exact solution of the variational formulation (\ref{3.2}).
Taking $E^n=\widetilde{u}^n-u^n$. Then we get the following perturbation equation
\begin{equation*}
(\overline{\partial} E^n,\chi)+b(\underline{E}^n,\chi)=0
\end{equation*}
with $\underline{E}^n=\frac{1}{2}(E^n+E^{n-1})$. Taking $\chi=\underline{E}^n$, we obtain
 $$||E^n||_{L^2(\Omega)}\leq ||E^{n-1}||_{L^2(\Omega)}\leq ||E^0||_{L^2(\Omega)}.$$
The proof is completed.
\end{proof}

\begin{theorem}\label{theorem3.2}
Let $u$ be the exact solution of (\ref{1.6}) and $u^n$ be the solution of semi-discrete scheme (\ref{3.2}). Then we have the following
error estimates
$$||u(x,t_n)-u^n(x)||_{L^2(\Omega)}\leq 2 C_u(b-a)^{1/2}T\tau^2,$$
where $C_u$ is defined by  (\ref{3.1}) and  $(x,t_n)\in (a,b) \times (0,T]$ with $N\tau\leq T$.
\end{theorem}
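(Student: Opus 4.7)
The plan is to follow a standard Galerkin energy argument, using Theorem \ref{theorem3.1}'s coercivity setup but only needing nonnegativity of $b(\cdot,\cdot)$ on the error. I would begin by introducing the error $e^n(x) := u(x,t_n) - u^n(x)$ and deriving its variational equation. The exact solution satisfies the true PDE at $t=t_{n-1/2}$; testing that PDE against $\chi \in H_0^{\alpha/2}(\Omega)$ produces
\begin{equation*}
(u_t(\cdot,t_{n-1/2}),\chi) + b(u(\cdot,t_{n-1/2}),\chi) = (f^{n-1/2},\chi).
\end{equation*}
Replacing $u_t(\cdot,t_{n-1/2})$ by $\overline{\partial}u(\cdot,t_n) - r_\tau^n$ (via (\ref{3.1})) and $u(\cdot,t_{n-1/2})$ by $\underline{u}(\cdot,t_n) - \rho_\tau^n$ (the standard Crank--Nicolson midpoint expansion, where $\rho_\tau^n$ is also $O(\tau^2)$ with constant depending on $u$), and subtracting (\ref{3.2}), I get
\begin{equation*}
(\overline{\partial}e^n,\chi) + b(\underline{e}^n,\chi) = (r_\tau^n,\chi) + b(\rho_\tau^n,\chi).
\end{equation*}
Absorbing both consistency terms into a single truncation term $R^n$ with $\|R^n\|_{L^2(\Omega)} \le 2C_u(b-a)^{1/2}\tau^2$ (possibly after renaming $C_u$) keeps the bookkeeping clean.

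Next, I would test this identity with $\chi = \underline{e}^n = \tfrac12(e^n+e^{n-1})$. Two facts drive the estimate: first, the algebraic identity $(\overline{\partial}e^n,\underline{e}^n) = \tfrac{1}{2\tau}\bigl(\|e^n\|_{L^2(\Omega)}^2 - \|e^{n-1}\|_{L^2(\Omega)}^2\bigr)$; second, Lemma \ref{lemma2.12} gives $b(\underline{e}^n,\underline{e}^n) \ge C_0\|\underline{e}^n\|_{H_0^{\alpha/2}(\Omega)}^2 \ge 0$, so that term is discarded. Applying Cauchy--Schwarz to the right-hand side yields
\begin{equation*}
\tfrac{1}{2\tau}\bigl(\|e^n\|_{L^2(\Omega)}^2 - \|e^{n-1}\|_{L^2(\Omega)}^2\bigr) \le \|R^n\|_{L^2(\Omega)}\,\|\underline{e}^n\|_{L^2(\Omega)},
\end{equation*}
and then factoring the left side as $\tfrac{1}{2\tau}(\|e^n\|+\|e^{n-1}\|)(\|e^n\|-\|e^{n-1}\|)$ while bounding $\|\underline{e}^n\|_{L^2(\Omega)} \le \tfrac12(\|e^n\|_{L^2(\Omega)}+\|e^{n-1}\|_{L^2(\Omega)})$ lets me cancel the common factor and obtain the clean one-step bound
\begin{equation*}
\|e^n\|_{L^2(\Omega)} - \|e^{n-1}\|_{L^2(\Omega)} \le \tau \|R^n\|_{L^2(\Omega)}.
\end{equation*}

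To finish, I would sum from $k=1$ to $n$, use $e^0 = 0$ (since $u^0 = \psi(x) = u(x,0)$), and apply $n\tau \le T$ together with the pointwise truncation bound to conclude
\begin{equation*}
\|e^n\|_{L^2(\Omega)} \le \sum_{k=1}^{n}\tau \|R^k\|_{L^2(\Omega)} \le T \cdot 2C_u(b-a)^{1/2}\tau^2 = 2C_u(b-a)^{1/2}T\tau^2,
\end{equation*}
matching the asserted constant.

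The main obstacle, in my view, is not the fractional machinery (we only need $b(\underline{e}^n,\underline{e}^n)\ge 0$, a weaker consequence of Lemma \ref{lemma2.12}) but the careful treatment of the two separate $O(\tau^2)$ truncation errors arising from Crank--Nicolson: one in approximating $u_t(\cdot,t_{n-1/2})$ and one in approximating $u(\cdot,t_{n-1/2})$ by the midpoint $\underline{u}(\cdot,t_n)$. Only the first is explicitly given in (\ref{3.1}); the second must be justified by Taylor expansion of $u$ in time, so one needs enough temporal regularity of the exact solution $u$ for both terms to be $O(\tau^2)$ in the $L^2(\Omega)$ norm with a constant of the form $C_u(b-a)^{1/2}$. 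After that, the factor-of-2 in the stated constant accounts for combining the two truncation contributions, and the remainder of the estimate is the standard energy-plus-telescoping argument sketched above.
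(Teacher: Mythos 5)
Your proposal is correct and follows essentially the same route as the paper: form the error equation for $e^n$, test with $\chi=\underline{e}^n$, discard the nonnegative term $b(\underline{e}^n,\underline{e}^n)\ge 0$ via Lemma \ref{lemma2.12}, and sum the resulting bounds with $e^0=0$ and $n\tau\le T$ to get the $O(\tau^2)$ estimate. The only differences are cosmetic: the paper extracts the factor $2$ by choosing the index $m$ maximizing $\|e^s\|_{L^2(\Omega)}$ and dividing by $\|e^m\|_{L^2(\Omega)}$, whereas you cancel the common factor $\|e^n\|_{L^2(\Omega)}+\|e^{n-1}\|_{L^2(\Omega)}$ directly to get a one-step bound; and you are in fact more careful than the paper about the second consistency term coming from replacing $u(\cdot,t_{n-1/2})$ by $\underline{u}^n$ inside $b(\cdot,\cdot)$, which the paper silently absorbs into $r_\tau^n$.
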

\begin{proof}
Define $e^n=u(x,t_n)-u^n(x)$.
According to  $e^0=0$ and (\ref{3.1}), (\ref{3.2}), (\ref{1.6}), there exists
\begin{equation*}
(e^n-e^{n-1},\chi)+\tau b(\underline{e}^n,\chi)=(R_\tau^{n},\chi)
\end{equation*}
with $\underline{e}^n=\frac{1}{2}(e^n+e^{n-1})$ and  $||R_\tau^{n}||\leq c||\tau r_\tau^{n}||\leq C_u \tau^3$. Taking $\chi=\underline{e}^n$ in the  above equation, we get
\begin{equation*}
||e^{n}||_{L^2(\Omega)}^2-||e^{n-1}||_{L^2(\Omega)}^2\leq \left(R_\tau^{n},e^{n}+e^{n-1} \right).
\end{equation*}
Replacing $n$ with $s$ and summing up for $s$ from $1$ to $n$, there exists
\begin{equation}\label{3.3}
\begin{split}
||e^{n}||_{L^2(\Omega)}^2
\leq \!\sum_{s=1}^{n}\left(R_\tau^{s},e^{s}+e^{s-1} \right)
\leq \!\sum_{s=1}^{n}||R_\tau^{s}||_{L^2(\Omega)}\cdot\left(||e^{s}||_{L^2(\Omega)}+||e^{s-1}||_{L^2(\Omega)} \right).
\end{split}
\end{equation}
Suppose $m$ is chosen so that $||e^m||_{L^2(\Omega)}=\max\limits_{0\leq s\leq N}||e^s||_{L^2(\Omega)}$. Then
\begin{equation*}
\begin{split}
||e^{m}||_{L^2(\Omega)}^2
\leq\sum_{s=1}^{m}||R_\tau^{s}||_{L^2(\Omega)}  \cdot\left(||e^{m}||_{L^2(\Omega)}+||e^{m}||_{L^2(\Omega)} \right).
\end{split}
\end{equation*}
Hence $$||e^n||_{L^2(\Omega)}\leq  \max\limits_{0\leq s\leq N}||e^s||_{L^2(\Omega)} =||e^{m}||_{L^2(\Omega)}\leq 2\sum_{s=1}^{m}||R_\tau^{s}||_{L^2(\Omega)}\leq  2 C_u(b-a)^{1/2}T\tau^2.$$
The proof is completed.
\end{proof}

\subsection{Finite element approximation and error estimates for full discretization}
Let $S_h$ be a partition of $\Omega$ such that $\overline{\Omega}=\{ \cup K:K\in S_h \}$ with the uniform mesh size $h$.
Denote a finite dimensional subspace  $V_h\subset H_0^{\alpha/2}(\Omega)$ with a basis of the piecewise polynomials of degree
at most $r-1$  as
$$V_h=\{ v \in   H_0^{\alpha/2}(\Omega) \cap C^0(\overline{\Omega}):v|_K\in P_{r-1}(K)~ \forall K \in S_h \}.$$

Let us define the energy norm associated with (\ref{2.5}) as
\begin{equation*}
 ||u||_E:= \sqrt{b(u,u)}.
\end{equation*}
Form Lemmas \ref{lemma2.12} and \ref{lemma2.13}, we have  norm equivalence of $||\cdot||_{H_0^{\alpha/2}(\Omega)}$ and $||\cdot||_E$.

Before we start to discuss the time-dependent fractional  problem we shall briefly review some basic relevant material about the finite element method for the
corresponding stationary problem.
\begin{definition}[Variational Solution of Stationary Problem]
For $f\in H^{-\frac{\alpha}{2}}(\Omega)$, find $u \in H_0^{\frac{\alpha}{2}}(\Omega)$ such that
\begin{equation*}
  b(u,v)=(f,v)~~\forall v \in H_0^{\frac{\alpha}{2}}(\Omega).
\end{equation*}
\end{definition}
The approximate  problem is then to find  $u_h\in V_h$ such that
\begin{equation*}
  b(u_h,v_h)=(f,v_h)~~\forall   v_h \in V_h.
\end{equation*}
\begin{lemma}\cite{Celik:17}\label{lemma3.4}
Let $u$ be the exact solution to $b(u,v)=(f,v)$ and let $P_h$ be the orthogonal  projection from $H_0^{\alpha/2}(\Omega)$ to its finite dimensional subspace $V_h$. Then
\begin{equation*}
  b(P_hu-u,v)=0~~\forall v\in V_h.
\end{equation*}
\end{lemma}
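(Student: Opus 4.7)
The statement is essentially the Galerkin (Ritz) orthogonality, and I would prove it by first making sense of "orthogonal projection with respect to $b(\cdot,\cdot)$" and then reading off the desired identity from the defining relation of $P_h$. Concretely, the plan is to (i) set up an energy inner product from $b$, (ii) construct $P_h$ via a variational problem on the finite-dimensional subspace $V_h$, and (iii) subtract to obtain the orthogonality relation.

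First, I would invoke Lemma \ref{lemma2.12} and Lemma \ref{lemma2.13} to assert that $b:H_0^{\alpha/2}(\Omega)\times H_0^{\alpha/2}(\Omega)\to\mathbb{R}$ is coercive (with constant $C_0$) and continuous (with constant $C_1$). Together with the norm equivalence $\|\cdot\|_E\simeq\|\cdot\|_{H_0^{\alpha/2}(\Omega)}$ noted just after the definition of the energy norm, this tells us that $b$ satisfies the hypotheses of the Lax--Milgram theorem on any closed subspace of $H_0^{\alpha/2}(\Omega)$. Interpreting $P_h$ as the projection in the energy inner product is legitimate precisely because of this coercivity/continuity.

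Next, I would define $P_h u \in V_h$ as the unique element given by Lax--Milgram applied on $V_h$, namely the solution of
\begin{equation*}
b(P_h u, v) = b(u,v) \qquad \forall v \in V_h.
\end{equation*}
Uniqueness follows from coercivity, and one checks $P_h v = v$ for $v\in V_h$, so $P_h$ is indeed a projection onto $V_h$; the fact that it is orthogonal with respect to the bilinear form $b$ is built into its defining equation. Subtracting the two sides and using bilinearity yields
\begin{equation*}
b(P_h u - u, v) = b(P_h u, v) - b(u,v) = 0 \qquad \forall v \in V_h,
\end{equation*}
which is the claim.

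The only subtle point is the word \emph{orthogonal}: if one insists on a symmetric inner product, one needs to verify that the pairing $({_a}D_x^{\alpha/2,\lambda}u,{_x}D_b^{\alpha/2,\lambda}v)$ is symmetric in $u,v$. This can be checked by a Fourier-side computation analogous to Lemma \ref{lemma2.9}, using $\overline{(\lambda+i\omega)^{\alpha/2}}=(\lambda-i\omega)^{\alpha/2}$ and the reality of $u,v$, so that the real-valued form $\tilde b(u,v):=\frac12(b(u,v)+b(v,u))$ coincides with $b(u,v)$ on $H_0^{\alpha/2}(\Omega)$; one then runs the same construction with $\tilde b$. I expect this to be the only step that requires care; the rest is a direct application of Lax--Milgram and bilinearity.
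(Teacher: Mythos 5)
Your proof is correct and is the standard Galerkin/Ritz-orthogonality argument: interpreting $P_h$ as the projection with respect to the energy inner product induced by $b$ (legitimate by the coercivity and continuity of Lemmas \ref{lemma2.12} and \ref{lemma2.13}, plus the symmetry of $b$, which your Fourier-side remark correctly settles since $\left({_{a}}D_x^{\alpha/2,\lambda}u,{_{x}}D_{b}^{\alpha/2,\lambda}v\right)$ is indeed symmetric for real $u,v$), the identity $b(P_hu-u,v)=0$ is immediate from the defining relation. The paper itself gives no proof of this lemma---it is stated with a citation to \cite{Celik:17}---and your argument is exactly the intended one, consistent with how the lemma is used in the proof of Lemma \ref{lemma3.6}.
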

Next we prove the following fractional  regularity  estimate of {\em strong} solutions,  which is the intermediate situation of weak solution and classical solutions \cite{Gilbarg:97}.
\begin{lemma}(Fractional regularity estimate)\label{lemma3.5}
Let $\sigma\geq 3\lambda^\alpha$ with $1<\alpha<2$ and $z\in  H^{\alpha}(\Omega)\cap  H_0^{\alpha/2}(\Omega)$,  $g(x)\in L^2(\Omega)$ solve
the following adjoint problem
\begin{equation}\label{3.4}
\begin{split}
-\left( {_{a}}\mathbb{D}_x^{\alpha,\lambda}+{ _{x}}\mathbb{D}_{b}^{\alpha,\lambda} \right)z(x)+\sigma z(x) &=g(x),~~~~x\in \Omega,\\
z(x)&=0,~~~~~~~~x\in {\mathbb R}  \setminus \Omega.
\end{split}
\end{equation}
Then there exists a positive constant $C_a$ such that
$$||z||_{ H^{\alpha}(\Omega)} \leq C_a||g||_{L^2(\Omega)}.$$
\end{lemma}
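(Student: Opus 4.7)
The plan is to extend $z$ (and $g$) by zero to $\mathbb{R}$ and convert the adjoint equation (\ref{3.4}) into a Fourier-multiplier identity whose symbol has a pointwise lower bound of order $(1+\omega^2)^{\alpha/2}$. Since $z$ vanishes outside $\Omega$, the integral formulas in (\ref{1.4})--(\ref{1.3}) give ${_a}\mathbb{D}_x^{\alpha,\lambda}z={_{-\infty}}\mathbb{D}_x^{\alpha,\lambda}z$ and ${_x}\mathbb{D}_b^{\alpha,\lambda}z={_x}\mathbb{D}_{\infty}^{\alpha,\lambda}z$ on $\Omega$, so after zero-extension the equation holds with the whole-line operators, while $\|g\|_{L^2(\mathbb{R})}=\|g\|_{L^2(\Omega)}$.

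Applying Lemma \ref{lemma2.3} and noting that the first-derivative tempering corrections in (\ref{1.3}) cancel by symmetry, the Fourier transform yields $p(\omega)\widehat{z}(\omega)=\widehat{g}(\omega)$, where
\[
p(\omega):=\sigma+2\lambda^\alpha-(\lambda-i\omega)^\alpha-(\lambda+i\omega)^\alpha=\sigma+2(\lambda^2+\omega^2)^{\alpha/2}\bigl[\cos^\alpha\theta-\cos(\alpha\theta)\bigr],
\]
with $\theta=\arctan(|\omega|/\lambda)\in[0,\pi/2)$, the last equality using $\lambda^\alpha=(\lambda^2+\omega^2)^{\alpha/2}\cos^\alpha\theta$. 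Lemma \ref{lemma2.11} immediately gives $p(\omega)\ge\sigma>0$ pointwise.

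The crux is to upgrade this to a quantitative bound $p(\omega)\ge C(\lambda^2+\omega^2)^{\alpha/2}$, which I would obtain by splitting on $\theta$. For $\theta\in[0,\pi/3]$, $\cos\theta\ge 1/2$ gives $(\lambda^2+\omega^2)^{\alpha/2}\le 2^\alpha\lambda^\alpha$, so the hypothesis $\sigma\ge 3\lambda^\alpha$ yields $p(\omega)\ge\sigma\ge (3/2^\alpha)(\lambda^2+\omega^2)^{\alpha/2}$. For $\theta\in[\pi/3,\pi/2)$ the strictly increasing function $\cos^\alpha\theta-\cos(\alpha\theta)$ (cf.\ the proof of Lemma \ref{lemma2.11}) is bounded below by $1/2^\alpha-\cos(\alpha\pi/3)>0$, so $p(\omega)\ge 2[1/2^\alpha-\cos(\alpha\pi/3)](\lambda^2+\omega^2)^{\alpha/2}$ there. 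Invoking $(\lambda^2+\omega^2)^{\alpha/2}\ge\min\{1,\lambda^\alpha\}(1+\omega^2)^{\alpha/2}$, a consequence of (\ref{2.3}), upgrades this to $p(\omega)\ge C(1+\omega^2)^{\alpha/2}$ uniformly in $\omega$.

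Parseval's formula (\ref{2.1}) and Lemma \ref{lemma2.6} then deliver
\[
\|z\|_{H^\alpha(\Omega)}^2\le\int_{\mathbb{R}}(1+\omega^{2\alpha})|\widehat{z}|^2\,d\omega\le\int_{\mathbb{R}}\frac{(1+\omega^2)^\alpha}{p(\omega)^2}|\widehat{g}|^2\,d\omega\le C_a^2\|g\|_{L^2(\Omega)}^2.
\]
The main obstacle is the symbol lower bound: while $p(\omega)/(1+\omega^2)^{\alpha/2}$ is continuous, positive, and tends to $2|\cos(\alpha\pi/2)|>0$ at infinity, for $|\omega|\lesssim\lambda$ the bracket $\cos^\alpha\theta-\cos(\alpha\theta)$ vanishes, and it is precisely in order to dominate $(\lambda^2+\omega^2)^{\alpha/2}\lesssim\lambda^\alpha$ by the constant $\sigma$ in this small-$\theta$ regime that the threshold $\sigma\ge 3\lambda^\alpha$ is imposed.
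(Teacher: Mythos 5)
Your argument is correct at the level of rigor the paper itself operates at, but it follows a genuinely different route. The paper never inverts the Fourier symbol: it squares the equation, expands $\|-\left({_a}\mathbb{D}_x^{\alpha,\lambda}+{_x}\mathbb{D}_b^{\alpha,\lambda}\right)z+\sigma z\|^2_{L^2}$ term by term, discards the nonnegative cross terms (via Lemma \ref{lemma2.11}), and converts each surviving piece into a weight against $|z|^2_{H^{\alpha,\lambda}({\mathbb R})}$ using Lemmas \ref{lemma2.9} and \ref{lemma2.10} with $\nu=2\alpha$; the resulting weight is $4\left[\cos^2(\alpha\theta)+\cos^{2\alpha}\theta\right]$ (after bounding $\sigma^2-4\lambda^{2\alpha}\ge 4\lambda^{2\alpha}$), and the proof closes by observing this function is bounded away from zero on $[0,\pi/2]$. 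Your symbol-inversion argument $|\widehat z|\le|\widehat g|/p(\omega)$ with $p(\omega)\ge C(1+\omega^2)^{\alpha/2}$, obtained by recycling the $\theta\lessgtr\pi/3$ split from Lemma \ref{lemma2.12}, is arguably cleaner: it makes completely explicit where $\sigma\ge 3\lambda^\alpha$ enters (only in the low-frequency regime $\theta\le\pi/3$, where the bracket $\cos^\alpha\theta-\cos(\alpha\theta)$ degenerates) and yields a computable constant, whereas the paper's constant comes from an implicit minimum of $\cos^2(\alpha\theta)+\cos^{2\alpha}\theta$. The one caveat is that both arguments share the same formal step: since the fractional operators are nonlocal, the whole-line operator applied to the zero-extension of $z$ does \emph{not} vanish on ${\mathbb R}\setminus\Omega$, so the right-hand side of the extended equation is not the zero-extension of $g$ and your assertion $\|g\|_{L^2({\mathbb R})}=\|g\|_{L^2(\Omega)}$ is not literally available; the paper makes the identical identification when it silently replaces $\|{_a}D_x^{\alpha,\lambda}z\|_{L^2(\Omega)}$ by its $L^2({\mathbb R})$ counterpart. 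Your phrasing merely makes this shared gap more visible, so it is not a defect relative to the paper's own proof, but neither argument is airtight on this point.
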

\begin{proof} From Lemma \ref{lemma2.11}, we have
\begin{equation*}
\begin{split}
\left(-\left( {_{a}}\mathbb{D}_x^{\alpha,\lambda}+{ _{x}}\mathbb{D}_{b}^{\alpha,\lambda} \right)z,z\right)
&=\left(-\left( {_{a}}D_x^{\alpha,\lambda}+{ _{x}}D_{b}^{\alpha,\lambda} \right)z,z\right)+2\lambda^\alpha(z,z)\\
&=-2\left( {_{a}}D_x^{\alpha/2,\lambda} z,{ _{x}}D_{b}^{\alpha/2,\lambda}z\right)+2\lambda^\alpha(z,z)\\
&=2 \left[\cos^\alpha( \theta)-\cos(\alpha \theta)\right]\circ |z|^2_{H^{\alpha/2,\lambda}({\mathbb R})}\geq 0.
\end{split}
\end{equation*}
According to the above equation and   Lemmas  \ref{lemma2.9}-\ref{lemma2.11} and (\ref{2.4}) with $\sigma\geq 3\lambda^\alpha$, we obtain
\begin{equation*}
\begin{split}
||g||^2_{L^2(\Omega)}
&=\big|\big|-\left( {_{a}}\mathbb{D}_x^{\alpha,\lambda}+{ _{x}}\mathbb{D}_{b}^{\alpha,\lambda} \right)z+\sigma z \big|\big|^2_{L^2(\Omega)}\\
&\geq \big|\big|\left( {_{a}}\mathbb{D}_x^{\alpha,\lambda}+{ _{x}}\mathbb{D}_{b}^{\alpha,\lambda} \right)z \big|\big|^2_{L^2(\Omega)}
     +\sigma^2\big|\big| z \big|\big|^2_{L^2(\Omega)}\\
&= \big|\big| {_{a}}D_x^{\alpha,\lambda}z \big|\big|^2_{L^2(\Omega)}+\big|\big|{ _{x}}D_{b}^{\alpha,\lambda} z \big|\big|^2_{L^2(\Omega)}
+2\left( { _{a}}D_x^{\alpha,\lambda}z, {_{x}}D_{b}^{\alpha,\lambda}z \right)\\
&\quad-4\lambda^\alpha\left( \left( {_{a}}\mathbb{D}_x^{\alpha,\lambda}+{ _{x}}\mathbb{D}_{b}^{\alpha,\lambda} \right)z,z\right)
     +\left(\sigma^2-4\lambda^{2\alpha}\right)\big|\big| z \big|\big|^2_{L^2(\Omega)}\\
&\geq \left[(2+ 2\cos(2\alpha \theta))
     +\left(\sigma^2-4\lambda^{2\alpha}\right)\frac{\cos^{2\alpha}( \theta)}{\lambda^{2\alpha}}\right] \circ |z|^2_{H^{\alpha,\lambda}({\mathbb R})} \\
&\geq  4f(\theta)|z|^2_{H^{\alpha,\lambda}(\Omega)}
\geq C||z||^2_{{H^{\alpha}}(\Omega)},
\end{split}
\end{equation*}
where  $f(\theta)=\cos^2(\alpha\theta)+\cos^{2\alpha}( \theta)>0.$ In fact,
if $f(\theta)=0$, it lead to $\theta=\frac{\pi}{2}$ and $\alpha=1$, which contradicts  $1<\alpha<2$.
The proof is completed.
\end{proof}
\begin{remark}\label{remark3.1}\end{remark}
\!\!\!\!\!\!In recent years, the optimal error estimate was provided under the assumption that the  weak solution has full regularity \cite{Celik:17,Deng:16,Ervin:05};  Ros-Oton and Serra study the regularity up to the boundary of solutions to the Dirichlet problem for the fractional Laplacian with H\"{o}lder estimates \cite{Ros-Oton:17};
Jin et al. pointed out that there is still a lack of the regularity of  {\em weak} solution  in general  but  given  the   regularity estimate of {\em strong} solutions \cite{Jin:15};
and Ervin et al. investigated the regularity of the {\em strong} solution to the two-side fractional diffusion equation  \cite{Ervin:18}.

\begin{lemma}(Approximation Property)\label{lemma3.6}
Let $u\in H_0^{\alpha/2}(\Omega)\cap H^r(\Omega)$ with  $r>\alpha/2$.
Then there exists the positive constant $C$ such that
$$ ||P_hu-u||_{L^2(\Omega)}+h^{\alpha/2} ||P_hu-u||_{{H_0^{\alpha/2}}(\Omega)} \leq Ch^{r}||u||_{H^r(\Omega)}.$$
\end{lemma}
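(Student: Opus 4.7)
The plan is to follow a standard Céa--Aubin--Nitsche strategy, exploiting the regularity estimate of Lemma~\ref{lemma3.5} to trade powers of $h$ between the energy and $L^2$ estimates.

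\textbf{Step 1 (Energy-norm estimate via Céa).} Since Lemma~\ref{lemma2.12} gives coercivity and Lemma~\ref{lemma2.13} gives continuity of $b(\cdot,\cdot)$ on $H_0^{\alpha/2}(\Omega)$, and $P_h$ satisfies the Galerkin orthogonality $b(P_hu-u,v_h)=0$ for all $v_h\in V_h$ (Lemma~\ref{lemma3.4}), the standard Céa argument yields
\begin{equation*}
\|P_hu-u\|_{H_0^{\alpha/2}(\Omega)}\;\le\; \frac{C_1}{C_0}\,\inf_{v_h\in V_h}\|u-v_h\|_{H_0^{\alpha/2}(\Omega)}.
\end{equation*}
I would then insert $v_h=I_hu$, the Lagrange interpolant of $u$ into the piecewise polynomial space $V_h$ of degree $r-1$. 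By the classical Bramble--Hilbert lemma combined with scaling on each element $K\in S_h$, one obtains the integer-order estimates $\|u-I_hu\|_{L^2(\Omega)}\le Ch^{r}\|u\|_{H^r(\Omega)}$ and $\|u-I_hu\|_{H^1(\Omega)}\le Ch^{r-1}\|u\|_{H^r(\Omega)}$; interpolating between these (via the $K$-method of space interpolation between $L^2$ and $H^1$) gives
\begin{equation*}
\|u-I_hu\|_{H^{\alpha/2}(\Omega)} \;\le\; C\,h^{r-\alpha/2}\|u\|_{H^r(\Omega)},
\end{equation*}
which together with the preceding inequality produces the $H^{\alpha/2}$ half of the claim.

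\textbf{Step 2 (Aubin--Nitsche duality for the $L^2$ estimate).} Set $e:=u-P_hu\in H_0^{\alpha/2}(\Omega)$ and introduce the adjoint problem: find $z\in H_0^{\alpha/2}(\Omega)$ with $b(w,z)=(w,e)$ for every $w\in H_0^{\alpha/2}(\Omega)$. Because the leading operator in $b$ is the symmetric combination ${}_{a}\mathbb{D}_x^{\alpha,\lambda}+{}_{x}\mathbb{D}_{b}^{\alpha,\lambda}$, the adjoint problem is exactly the one treated in Lemma~\ref{lemma3.5}, so $z\in H^{\alpha}(\Omega)$ and
\begin{equation*}
\|z\|_{H^{\alpha}(\Omega)}\;\le\; C_a\,\|e\|_{L^2(\Omega)}.
\end{equation*}
Choosing $w=e$ and using Galerkin orthogonality together with continuity of $b$ and the Step~1 interpolation estimate applied to $z$,
\begin{equation*}
\|e\|_{L^2(\Omega)}^2 \;=\; b(e,z)\;=\;b(e,z-I_hz)\;\le\; C_1\,\|e\|_{H_0^{\alpha/2}(\Omega)}\,\|z-I_hz\|_{H_0^{\alpha/2}(\Omega)}
\;\le\; C\,h^{\alpha/2}\,\|e\|_{H_0^{\alpha/2}(\Omega)}\,\|e\|_{L^2(\Omega)}.
\end{equation*}
Dividing by $\|e\|_{L^2(\Omega)}$ and substituting the bound from Step~1 yields $\|e\|_{L^2(\Omega)}\le C\,h^{r}\|u\|_{H^r(\Omega)}$, and adding the $h^{\alpha/2}$-weighted energy-norm estimate closes the proof.

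\textbf{Main obstacle.} The delicate point is the interpolation error estimate in the \emph{fractional} Sobolev norm $H^{\alpha/2}$: the classical elementwise Bramble--Hilbert argument is phrased for integer orders, so one must justify the non-local seminorm estimate either by a real/complex interpolation argument between $L^2$ and $H^1$ (which requires $V_h\subset H^{\alpha/2}(\Omega)$ to be stable, valid since $\alpha/2<1$) or by a direct analysis using a scaled reference element together with the equivalence between the intrinsic fractional seminorm on $\Omega$ and the Fourier-based one (Theorem~\ref{theorem2.8}). A secondary subtlety is verifying that the adjoint bilinear form that drives Lemma~\ref{lemma3.5} coincides with $b(\cdot,z)$; this relies on the symmetric pairing of the left/right fractional derivatives in the Fourier representation and the hypothesis $\sigma\ge 3\lambda^{\alpha}$ needed in Lemma~\ref{lemma3.5}.
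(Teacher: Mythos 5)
Your proposal is correct and follows essentially the same route as the paper: a C\'ea argument with the interpolant $I_hu$ for the $H^{\alpha/2}$ bound, followed by Aubin--Nitsche duality using the adjoint problem of Lemma~\ref{lemma3.5} and the interpolation estimate for $z$ to obtain the $L^2$ bound. The only difference is cosmetic: you sketch how to justify the fractional-order interpolation estimate (Bramble--Hilbert plus space interpolation), whereas the paper simply cites it from the literature.
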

\begin{proof}
For any $v \in V_h \subset H_0^{\alpha/2}(\Omega)$, from Lemmas \ref{lemma2.12}, \ref{lemma3.4}, \ref{lemma2.13},  there exists
\begin{equation*}
\begin{split}
 C_0||P_hu-u||^2_{{H_0^{\alpha/2}}(\Omega)}
 &\leq b(P_hu-u,P_hu-u) \\
 &= b(P_hu-u,v-u)+b(P_hu-u,P_hu-v)\\
 &\leq C_1||P_hu-u||_{{H_0^{\alpha/2}}(\Omega)}||v-u||_{{H_0^{\alpha/2}}(\Omega)},
\end{split}
\end{equation*}
which leads to
\begin{equation}\label{3.5}
\begin{split}
||P_hu-u||_{{H_0^{\alpha/2}}(\Omega)}
&\leq \frac{C_1}{ C_0}\inf_{v\in V_h}||v-u||_{{H_0^{\alpha/2}}(\Omega)}
\leq \frac{C_1}{ C_0}||I_hu-u||_{{H_0^{\alpha/2}}(\Omega)}\\
&\leq \frac{C_1C_2}{ C_0}h^{r-\alpha/2}||u||_{H^r(\Omega)}\leq C_3h^{r-\alpha/2}||u||_{H^r(\Omega)},
\end{split}
\end{equation}
where $I_h$ is the interpolation operator \cite{Brenner:08,Ervin:05}, i.e., for  $u \in H^r(\Omega)$ and $0\leq s \leq r$, there exists
$$||I_hu-u||_{{H^{s}}(\Omega)}\leq C h^{r-s}||u||_{H^r(\Omega)}. $$

For the error bound in $L_2$-norm we proceed by  a  Aubin-Nitsche technology.
Let $z$ be the solution to (\ref{3.4}). Then $z$  satisfies the variational formulation
\begin{equation*}
  b(v,z)=(P_hu-u,v) ~~\forall v \in  H_0^{\alpha/2}(\Omega)
\end{equation*}
with $g=P_hu-u$.
Using  Lemmas \ref{lemma3.4} and  \ref{lemma3.5}, we obtain
\begin{equation*}
\begin{split}
||P_hu-u||^2_{L^2(\Omega)}
&=b(P_hu-u,z)=b(P_hu-u,z-I_hz)\\
&\leq  C_1||z-I_hz||_{{H_0^{\alpha/2}}(\Omega)}||P_hu-u||_{{H_0^{\alpha/2}}(\Omega)}\\
&\leq  C_1C_2h^{\alpha/2}||z||_{H^\alpha(\Omega)}||P_hu-u||_{{H_0^{\alpha/2}}(\Omega)}\\
&\leq  C_1C_2C_ah^{\alpha/2}||P_hu-u||_{L^2(\Omega)}||P_hu-u||_{{H_0^{\alpha/2}}(\Omega)}.
\end{split}
\end{equation*}
Using the above equation and (\ref{3.5}), there exists
\begin{equation*}
\begin{split}
||P_hu-u||_{L^2(\Omega)}
\leq  C_1C_2C_ah^{\alpha/2}||P_hu-u||_{{H_0^{\alpha/2}}(\Omega)}
\leq  C_1C_2C_a  C_3h^{r}||u||_{H^r(\Omega)}.
\end{split}
\end{equation*}
The proof is completed.
\end{proof}

Now we give the finite element approximation of (\ref{3.2}): Find $u_h^n\in V_h$ such that
\begin{equation}\label{3.6}
(\overline{\partial} u_h^n,\chi)+b(\underline{u}_h^n,\chi)=(f_h^{n-1/2},\chi).
\end{equation}

Let
\begin{equation}\label{3.7}
\overline{\partial} u^n=\overline{\partial} P_hu^n+r_h^n(x)
=\frac{P_hu^n(x)-P_hu^{n-1}(x)}{\tau}+r_h^n(x).
\end{equation}
Combine (\ref{3.1}) and (\ref{3.7}), there exists
\begin{equation*}
  \frac{\partial u(x,t)}{\partial t}\Big|_{t=t_{n-1/2}}=\overline{\partial} P_hu^n+r_{\tau,h}^n(x)
\end{equation*}
with
\begin{equation}\label{3.8}
r_{\tau,h}^n(x)=r_{\tau}^n(x)+r_{h}^n(x).
\end{equation}

\begin{lemma}\label{lemma3.7}
The truncation error $r_{\tau,h}^n(x)$  is bounded by
$$||r_{\tau,h}^n||_{L^2(\Omega)}\leq C\left( \tau^2+ h^{r}\right).$$
\end{lemma}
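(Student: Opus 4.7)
The plan is to split $r_{\tau,h}^n$ by means of the definition (\ref{3.8}) and estimate the two pieces separately, since the temporal truncation is controlled by (\ref{3.1}) and the spatial truncation reduces to the Aubin--Nitsche/$L^2$ approximation bound in Lemma \ref{lemma3.6} applied to $u_t$.

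First I would use the triangle inequality on (\ref{3.8}) to get
\begin{equation*}
\|r_{\tau,h}^n\|_{L^2(\Omega)} \leq \|r_\tau^n\|_{L^2(\Omega)} + \|r_h^n\|_{L^2(\Omega)}.
\end{equation*}
The first term is immediate from (\ref{3.1}): since $|r_\tau^n(x)| \leq C_u\tau^2$ pointwise, one gets $\|r_\tau^n\|_{L^2(\Omega)} \leq C_u(b-a)^{1/2}\tau^2 \leq C\tau^2$.

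Next I would handle $r_h^n$. Reading off its definition from (\ref{3.7}),
\begin{equation*}
r_h^n = \overline{\partial}u^n - \overline{\partial}P_h u^n = \overline{\partial}\bigl((I-P_h)u^n\bigr) = \frac{1}{\tau}\bigl[(I-P_h)u^n - (I-P_h)u^{n-1}\bigr].
\end{equation*}
Since $P_h$ is linear and time-independent, it commutes with integration in $t$, so I can write this as a time average of $(I-P_h)u_t$:
\begin{equation*}
r_h^n(x) = \frac{1}{\tau}\int_{t_{n-1}}^{t_n} (I-P_h)u_t(x,s)\,ds.
\end{equation*}
Taking $L^2(\Omega)$ norms, applying Minkowski's inequality in the time variable, and then invoking the approximation property in Lemma \ref{lemma3.6} with $u_t(\cdot,s) \in H^r(\Omega)\cap H_0^{\alpha/2}(\Omega)$ gives
\begin{equation*}
\|r_h^n\|_{L^2(\Omega)} \leq \frac{1}{\tau}\int_{t_{n-1}}^{t_n} \|(I-P_h)u_t(\cdot,s)\|_{L^2(\Omega)}\,ds \leq C h^r \sup_{s\in[0,T]}\|u_t(\cdot,s)\|_{H^r(\Omega)} \leq C h^r.
\end{equation*}
Adding the two estimates yields the claimed bound.

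The only subtle point is the implicit regularity assumption on $u_t$: one needs $u_t(\cdot,s) \in H^r(\Omega)$ uniformly in $s$ so that Lemma \ref{lemma3.6} applies; this is the standard counterpart of the pointwise bound $|r_\tau^n|\leq C_u\tau^2$, which already tacitly requires $\partial_t^3 u$ to be bounded. No fractional regularity of $u$ itself is needed beyond what Lemma \ref{lemma3.6} consumes, consistent with the paper's stated framework. Everything else is routine: the main (and only) obstacle is recognizing the integral representation of $r_h^n$ so that the $1/\tau$ factor cancels against the length of the interval $[t_{n-1},t_n]$, avoiding a spurious factor of $\tau^{-1}$.
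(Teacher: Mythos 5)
Your proof is correct and follows essentially the same route as the paper: the same triangle-inequality split of $r_{\tau,h}^n$ via (\ref{3.8}), the same integral representation $r_h^n=\tau^{-1}\int_{t_{n-1}}^{t_n}(I-P_h)u_t\,ds$, and the same application of Lemma \ref{lemma3.6} to $u_t$. Your explicit remark about the needed regularity of $u_t$ is a point the paper leaves implicit, but the argument is identical in substance.
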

\begin{proof}
Here, $r_{\tau}^n(x)$ is given in (\ref{3.1}), i.e.,
$$||r_\tau^n||_{L^2(\Omega)}\leq C_u \tau^2.$$
From Lemma \ref{lemma3.6}, there exists
\begin{equation}\label{3.9}
  ||P_hu-u||_{L^2(\Omega)} \leq C_1h^{r}||u||_{H^r(\Omega)}.
\end{equation}
Then using (\ref{3.7}) and (\ref{3.9}), we have
\begin{equation*}
\begin{split}
r_h^n(x)=(I-P_h)\overline{\partial} u^n=(I-P_h)\tau^{-1}\int_{t_{n-1}}^{t_n}u_tds
=\tau^{-1}\int_{t_{n-1}}^{t_n}(I-P_h)u_tds,
\end{split}
\end{equation*}
it yields
\begin{equation*}
\begin{split}
||r_h^n||_{L^2(\Omega)}\leq \tau^{-1}\int_{t_{n-1}}^{t_n}||(I-P_h)u_t||_{L^2(\Omega)}ds
\leq C_1h^{r}||u_t||_{H^r(\Omega)}.
\end{split}
\end{equation*}
Hence, from (\ref{3.8}), there exists
\begin{equation*}
||r_{\tau,h}^n||_{L^2(\Omega)}\leq ||r_{\tau}^n||_{L^2(\Omega)}+||r_{h}^n||_{L^2(\Omega)}
\leq  C_u \tau^2+C_1h^{r}||u_t||_{H^r(\Omega)}\leq C\left( \tau^2+ h^{r}\right).
\end{equation*}
The proof is completed.
\end{proof}

\begin{theorem}\label{theorem3.8}
Let $u$ be the exact solution of (\ref{1.6}) and $u_h^n$ be the solution of the full discretization scheme (\ref{3.6}). Then we have the following
error estimates
$$||u(x,t_n)-u_h^n||_{L^2(\Omega)}\leq  C\left( \tau^2+ h^{r}\right).$$
\end{theorem}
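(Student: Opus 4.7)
The plan is to run the classical Galerkin splitting
\begin{equation*}
u(x,t_n)-u_h^n=\left(u(x,t_n)-P_hu^n\right)+\left(P_hu^n-u_h^n\right)=:\rho^n+\theta^n,
\end{equation*}
estimate $\rho^n$ directly from the stationary approximation theory in Section 3.2, and control $\theta^n\in V_h$ by an energy argument mirroring the one used for the semi-discrete stability/error in Theorems \ref{theorem3.1}--\ref{theorem3.2}. For $\rho^n$, Lemma \ref{lemma3.6} immediately gives $\|\rho^n\|_{L^2(\Omega)}\le Ch^r\|u(\cdot,t_n)\|_{H^r(\Omega)}$, which absorbs the spatial part of the stated bound.

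To handle $\theta^n$, I would first derive the error equation that it satisfies on $V_h$. Using (\ref{1.6}) at $t=t_{n-1/2}$, the Crank-Nicolson consistency identity $\partial_tu|_{t_{n-1/2}}=\overline{\partial}P_hu^n+r_{\tau,h}^n$ from (\ref{3.7})--(\ref{3.8}), the Taylor-expansion identity $u(\cdot,t_{n-1/2})=\underline{u}^n+O(\tau^2)$, and the Galerkin orthogonality $b(P_hu-u,\chi)=0$ for $\chi\in V_h$ (Lemma \ref{lemma3.4}), I obtain for every $\chi\in V_h$
\begin{equation*}
(\overline{\partial}P_hu^n,\chi)+b(\underline{P_hu}^n,\chi)=(f^{n-1/2},\chi)+(R^n,\chi),
\end{equation*}
where $R^n$ collects $r_{\tau,h}^n$ together with the $O(\tau^2)$ Crank-Nicolson defect; by Lemma \ref{lemma3.7} and the smoothness assumptions on $u$ one has $\|R^n\|_{L^2(\Omega)}\le C(\tau^2+h^r)$. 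Subtracting the fully discrete scheme (\ref{3.6}) yields
\begin{equation*}
(\overline{\partial}\theta^n,\chi)+b(\underline{\theta}^n,\chi)=(R^n,\chi)\qquad\forall\chi\in V_h.
\end{equation*}

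Now I would choose $\chi=\underline{\theta}^n=\tfrac{1}{2}(\theta^n+\theta^{n-1})\in V_h$. Since $b(\underline{\theta}^n,\underline{\theta}^n)\ge 0$ by coercivity (Lemma \ref{lemma2.12}), this produces the monotone identity
\begin{equation*}
\tfrac{1}{2\tau}\bigl(\|\theta^n\|_{L^2(\Omega)}^2-\|\theta^{n-1}\|_{L^2(\Omega)}^2\bigr)\le (R^n,\underline{\theta}^n),
\end{equation*}
from which, exactly as in the proof of Theorem \ref{theorem3.2}, summing from $s=1$ to $n$, choosing the index $m$ achieving $\max_{0\le s\le N}\|\theta^s\|_{L^2(\Omega)}$, and using $\theta^0=0$ (take $u_h^0=P_h\psi$), one obtains
\begin{equation*}
\|\theta^n\|_{L^2(\Omega)}\le 2\sum_{s=1}^{N}\tau\|R^s\|_{L^2(\Omega)}\le CT(\tau^2+h^r).
\end{equation*}
Combining with the bound on $\rho^n$ via the triangle inequality gives the claimed estimate.

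I expect the only delicate point to be the handling of the $b(\underline{u}^n,\chi)$ term: one has to verify that replacing $u(\cdot,t_{n-1/2})$ by $\underline{u}^n$ and then applying Galerkin orthogonality to pass to $\underline{P_hu}^n$ only introduces errors that (after measuring against $\underline{\theta}^n\in V_h$) contribute $O(\tau^2+h^r)$ in $L^2(\Omega)$ rather than in the fractional norm; this is exactly where the full $h^r$ convergence of $P_h$ in $L^2$ from Lemma \ref{lemma3.6} (and not just the $h^{r-\alpha/2}$ in the energy norm) is crucial, and where the Aubin--Nitsche duality built into that lemma does the real work. The remaining steps are routine adaptations of the semi-discrete argument.
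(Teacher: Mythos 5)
Your proposal is correct and follows essentially the same route as the paper: the paper sets $\varepsilon^n=u_h^n-P_hu(x,t_n)$ (your $-\theta^n$), derives the same error equation with residual $R_{\tau,h}^n$ bounded by $C\tau(\tau^2+h^r)$ via Lemma \ref{lemma3.7} and the Galerkin orthogonality of Lemma \ref{lemma3.4}, tests with $\underline{\varepsilon}^n$, sums and uses the same max-index argument as in Theorem \ref{theorem3.2}, and concludes by the triangle inequality with Lemma \ref{lemma3.6}. The ``delicate point'' you flag is exactly what the paper's Lemma \ref{lemma3.7} (resting on the $L^2$-rate $h^r$ from the Aubin--Nitsche step in Lemma \ref{lemma3.6}) is designed to handle, so no changes are needed.
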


\begin{proof}
Let $\varepsilon^n=u_h^n-P_hu(x,t_{n})$ with $\varepsilon^0=0$.  Using  Lemmas \ref{lemma3.7}, \ref{lemma3.4} and (\ref{3.6}), we get the following error equation
\begin{equation*}
(\varepsilon^n-\varepsilon^{n-1},\chi)+\tau b(\underline{\varepsilon}^n,\chi)=(R_{\tau,h}^{n},\chi)
\end{equation*}
with $\underline{\varepsilon}^n=\frac{1}{2}(\varepsilon^n+\varepsilon^{n-1})$ and  $||R_{\tau,h}^{n}||_{L^2(\Omega)}\leq C_1||\tau r_{\tau,h}^{n}||_{L^2(\Omega)}\leq  C\tau\left( \tau^2+ h^{r}\right) $.
 Taking $\chi=\underline{\varepsilon}^n$ in the above equation, we get
\begin{equation*}
||\varepsilon^{n}||_{L^2(\Omega)}^2-||\varepsilon^{n-1}||_{L^2(\Omega)}^2\leq \left(R_{\tau,h}^{n},\varepsilon^{n}+\varepsilon^{n-1} \right).
\end{equation*}
Replacing $n$ with $s$ and summing up for $s$ from $1$ to $n$, there exists
\begin{equation*}
\begin{split}
||\varepsilon^{n}||_{L^2(\Omega)}^2
\leq \sum_{s=1}^{n}\left(R_{\tau,h}^{s},\varepsilon^{s}+\varepsilon^{s-1} \right).
\end{split}
\end{equation*}

It can be treated in the same way as  (\ref{3.3}), we have
\begin{equation*}
  ||u_h^n-P_hu(x,t_{n})||_{L^2(\Omega)}=||\varepsilon^n||_{L^2(\Omega)}\leq \max\limits_{0\leq s\leq N}||\varepsilon^s||_{L^2(\Omega)}\leq  C_2\left( \tau^2+ h^{r}\right).
\end{equation*}
According to the above equation and Lemma \ref{lemma3.6}, which  leads to
\begin{equation*}
\begin{split}
||u(x,t_n)-u_h^n||_{L^2(\Omega)}
&\leq ||u(x,t_n)-P_hu(x,t_n)||_{L^2(\Omega)}+||P_hu(x,t_n)-u_h^n||_{L^2(\Omega)}\\
&\leq C_1h^{r}||u||_{H^r(\Omega)} + C_2\left( \tau^2+ h^{r}\right)\leq C\left( \tau^2+ h^{r}\right).
\end{split}
\end{equation*}
The proof is completed.
\end{proof}

\section{Multigrid method for time-dependent tempered fractional  problem}
The time-dependent fractional  MGM can be treated as the elliptic equations arising at a  fixed time step \cite{Bu:15}.
However,  the time-dependent PDEs should become easier to solve as the time step $\tau \rightarrow 0$, which
correspond to the mass matrix dominant \cite{Bank:81}.
Therefore, we need to look for an estimate of other form in the fractional $\tau$-norm, which is   independent of the time step $\tau$.

\subsection{Multigrid method}
Let $V_k$ denote $C^0$ piecewise linear functions with the  uniform mesh size $h_k=\frac{1}{2}h_{k-1}$, i.e., $V_{k-1}\subset V_k, ~ k\geq1.$
\begin{definition}\label{definition4.1}
The mesh-dependent inner product $(\cdot,\cdot)_k$ on $V_k$ is defined by
$$(v,w)_k=h_k\sum_{i=1}^{n_k}v(x_i)w(x_i),$$
where $\{ x_i\}_{i=1}^{n_k}$ is the set of internal nodes of mesh grid with $n_k={\rm dim}V_k$.
\end{definition}
\begin{definition}\cite{Brenner:08}\label{definition4.2}
The coarse-to-fine operator $I_{k-1}^k: V_{k-1}\rightarrow V_k$
is taken to the natural injection, i.e.,
$$I_{k-1}^kv=v ~~\forall v \in V_{k-1}.$$
The fine-to-coarse operator  $I_k^{k-1}: V_k\rightarrow V_{k-1}$
is defined to be the transpose of $I_{k-1}^k$ with respect to the $(\cdot,\cdot)_{k-1}$ and $(\cdot,\cdot)_{k}$ inner products.
In other words,
$$(I_k^{k-1}w,v)_{k-1}=(w,I_{k-1}^kv)_{k}=(w,v)_k ~~\forall v\in V_{k-1}, w\in V_k.$$
\end{definition}
We rewrite (\ref{1.6}) as the following variational form
\begin{equation*}
(u_t,v)+b(u,v)=(f,v)~~\forall v\in {H_0^{\alpha/2}}(\Omega), ~~t\geq 0.
\end{equation*}
We use the  Crank-Nicolson scheme  for the above equation, there exists
\begin{equation}\label{4.1}
\tau^{-1}(u^n,v)+\frac{1}{2}b(u^n,v)=(g^{n-1},v)~~~\forall v\in {H_0^{\alpha/2}}(\Omega)
\end{equation}
with
$$(g^{n-1},v)=\tau^{-1}(u^{n-1},v)-\frac{1}{2}b(u^{n-1},v)+(f^{n-1/2},v).$$
For the simplicity of illustration, we rewrite (\ref{4.1})  as
\begin{equation}\label{4.2}
a_\tau(z,v)=:\tau^{-1}(z,v)+\frac{1}{2}b(z,v)=(g,v)~~~\forall v\in {H_0^{\alpha/2}}(\Omega)
\end{equation}
with $z=u^n$.

Then the discretized problems is : Find $w_k\in V_k$ such that
\begin{equation}\label{4.3}
a_{\tau}(w_k,v)=(g,v)~~\forall v\in V_k.
\end{equation}
The operator $A_{k,\tau}:V_k\rightarrow V_k$ is defined by
\begin{equation}\label{4.4}
(A_{k,\tau}w,v)_k=a_{\tau}(w,v)~~\forall v,w\in V_k.
\end{equation}
In terms of the operator $A_{k,\tau}$, the resulting systems (\ref{4.3}) can be written as
\begin{equation}\label{4.5}
A_{k,\tau}u_k=g_k.
\end{equation}
Since $A_{k,\tau}$ is symmetric positive definite with respect to $(\cdot,\cdot)_k$, we can define a scale of mesh-dependent norms $|||\cdot|||_{s,k,\tau}$ in the following way
\begin{equation}\label{4.6}
|||v|||_{s,k,\tau}:=\sqrt{(A_{k,\tau}^sv,v)_k}.
\end{equation}

\begin{definition}\label{definition4.3}
Let $P_k:{H_0^{\alpha/2}}(\Omega)\rightarrow V_k$ be the orthogonal projection with respect to $a_\tau(\cdot,\cdot)$. In other words, $P_kv\in V_k$ and
\end{definition}
\begin{equation}\label{4.7}
a_{\tau}(v,w)=a_{\tau}(P_kv,w)~~\forall w\in V_k.
\end{equation}

Let $K_k$ be the iteration matrix of the smoothing operator. In this work, we
 take $K_k$ to be  the weighted (damped) Jacobi iteration matrix
\begin{equation}\label{4.8}
  K_k=I-S_kA_{k,\tau},~~S_{k}:=S_{k,\eta}=\eta D_{k,\tau}^{-1}
\end{equation}
with a weighting  factor $\eta \in (0,1/2]$, and $D_{k,\tau}$ is the diagonal of $A_{k,\tau}$.
A multigrid process can be regarded as defining a sequence of operators $B_k:\mathcal{B}_k\mapsto \mathcal{B}_k$
which is an approximate inverse of $A_{k,\tau}$ in the sense that $||I-B_kA_{k,\tau}||$ is bounded away from one.
The V-cycle multigrid algorithm \cite{Bramble:87} is provided in Algorithm \ref{MGM}.
\begin{algorithm*}
\caption{ V-cycle Multigrid Algorithm: Define $B_1=A_{1,\tau}^{-1}$. Assume that $B_{k-1}:\mathcal{B}_{k-1}\mapsto \mathcal{B}_{k-1}$ is defined.
We shall now define $B_k:\mathcal{B}_{k}\mapsto \mathcal{B}_{k}$ as an approximate iterative solver for the equation  $A_{k,\tau}z=g$.}
\label{MGM}
\begin{algorithmic}[1]
\STATE Pre-smooth: Let $S_{k,\eta}$ be defined by (\ref{4.8}), $z_0=0$, $l=1: m_1$, and
  $$z_l=z_{l-1}+S_{k,\eta_{pre}}(g_k-A_{k,\tau}z_{l-1}).$$
\STATE Coarse grid correction: Denote $e^{k-1} \in \mathcal{B}_{k-1}$ as the approximate solution of the residual equation $A_{k-1}e=I_k^{k-1}(g-A_{k,\tau}z_{m_1})$
with the iterator $B_{k-1}$
$$e^{k-1}=B_{k-1}I_k^{k-1}(g-A_{k,\tau}z_{m_1}).$$
\STATE Post-smooth:~~$z_{m_1+1}=z_{m_1}+I_{k-1}^{k}e^{k-1}$, $l=m_1+2: m_1+ m_2+1$, and
$$z_l=z_{l-1}+S_{k,\eta_{post}}(g-A_{k,\tau}z_{l-1}).$$
\STATE Define ${\rm MG} (k,z_0,g):=B_kg=z_{m_1+m_2+1}$.
\end{algorithmic}
\end{algorithm*}

\subsection{Uniform convergence of the V-cycle MGM}
Based on the (\ref{4.2}), we   define the fractional $\tau$-norm
\begin{equation}\label{4.9}
||v||^2_{\tau,\alpha}=\tau^{-1}||v||_{L^2(\Omega)}^2+||v||^2_{{H^{\alpha}}(\Omega)}~~\forall v\in H^{\alpha}(\Omega).
\end{equation}
\begin{remark}
The fractional $\tau$-norm  reduces to $\tau$-norm \cite{Bank:81,Douglas:79} when $\alpha=1$.
\end{remark}

In order to estimate the spectral radius, $\rho(A_{k,\tau})$, of $A_{k,\tau}$, we first introduce the following lemmas.
\begin{lemma}\label{lemma4.4}
The bilinear form $a_\tau(u,v)$ is symmetrical, continuous and coercive. In other words, there exist the positive constants $C_2,C_3$ such that
$$a_\tau(u,u)\geq C_2||u||^2_{\tau,\alpha/2} ~~{\rm and}~~|a_\tau(u,v)|\leq C_3||u||_{\tau,\alpha/2}||v||_{\tau,\alpha/2}.$$
\end{lemma}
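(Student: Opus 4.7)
The plan is to verify the three properties of $a_\tau(\cdot,\cdot)$ by leveraging the already-established bounds on $b(\cdot,\cdot)$ from Lemmas \ref{lemma2.12} and \ref{lemma2.13}, and by absorbing the $\tau^{-1}$ mass term directly into the definition \eqref{4.9} of the fractional $\tau$-norm. The definition $a_\tau(u,v)=\tau^{-1}(u,v)+\tfrac12 b(u,v)$ decomposes cleanly into a mass part and a stiffness part, and the norm $\|\cdot\|_{\tau,\alpha/2}$ is the exact sum of the corresponding energies, so the whole argument should amount to Cauchy--Schwarz plus the two lemmas.

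First I would establish symmetry. The pairing $\tau^{-1}(u,v)$ is manifestly symmetric, and for $b(u,v)$ the cross term $({_a}D_x^{\alpha/2,\lambda}u,{_x}D_b^{\alpha/2,\lambda}v)$ is symmetric in $u,v$ by Parseval (Lemma \ref{lemma2.3} gives the Fourier symbols $(\lambda-i\omega)^{\alpha/2}$ and $(\lambda+i\omega)^{\alpha/2}$, whose product is real-valued up to phase as in the proof of Lemma \ref{lemma2.9}); the remaining terms $2\kappa_\alpha\lambda^\alpha(u,v)$ and $\sigma(u,v)$ are obviously symmetric.

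Next, coercivity. Applying Lemma \ref{lemma2.12} to the stiffness piece gives $\tfrac12 b(u,u)\ge \tfrac{C_0}{2}\|u\|^2_{H_0^{\alpha/2}(\Omega)}$, while $\tau^{-1}(u,u)=\tau^{-1}\|u\|^2_{L^2(\Omega)}$. Adding the two and comparing termwise with \eqref{4.9} yields
\begin{equation*}
a_\tau(u,u)\ \ge\ \tau^{-1}\|u\|^2_{L^2(\Omega)}+\tfrac{C_0}{2}\|u\|^2_{H_0^{\alpha/2}(\Omega)}\ \ge\ C_2\,\|u\|^2_{\tau,\alpha/2},
\end{equation*}
with $C_2=\min\{1,C_0/2\}$, which is independent of both $h$ and $\tau$.

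For continuity I would apply Lemma \ref{lemma2.13} to $b$ and Cauchy--Schwarz to the mass term, then split the factor $\tau^{-1}=\tau^{-1/2}\cdot\tau^{-1/2}$ so each factor is absorbed into one copy of the $\tau$-norm:
\begin{equation*}
|a_\tau(u,v)|\ \le\ \bigl(\tau^{-1/2}\|u\|_{L^2}\bigr)\bigl(\tau^{-1/2}\|v\|_{L^2}\bigr)+\tfrac{C_1}{2}\|u\|_{H_0^{\alpha/2}}\|v\|_{H_0^{\alpha/2}}\ \le\ C_3\,\|u\|_{\tau,\alpha/2}\|v\|_{\tau,\alpha/2},
\end{equation*}
with $C_3=1+C_1/2$. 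The (mild) point to be careful about is that both inequalities must be genuinely $\tau$-independent; this is precisely why the norm \eqref{4.9} was chosen with the weight $\tau^{-1}$ on the $L^2$ part, so the only real obstacle — potential blow-up as $\tau\to 0$ — is neutralized by construction. No further ingredient beyond Lemmas \ref{lemma2.12} and \ref{lemma2.13} is needed.
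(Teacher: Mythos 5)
Your proposal is correct and follows essentially the same route as the paper: coercivity via Lemma \ref{lemma2.12} with $C_2=\min\{1,C_0/2\}$, and continuity via Lemma \ref{lemma2.13} plus a two-term Cauchy--Schwarz absorbing $\tau^{-1/2}$ into each factor, yielding $C_3=1+C_1/2$ (the paper writes this same step as an explicit expansion of $\|u\|_{\tau,\alpha/2}^2\|v\|_{\tau,\alpha/2}^2$). Your brief verification of symmetry is a harmless addition the paper omits.
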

\begin{proof}
According to (\ref{4.2}) and Lemma \ref{lemma2.12}, there exists
\begin{equation*}
\begin{split}
a_\tau(u,u)=\tau^{-1}(u,u)+\frac{1}{2}b(u,u)\geq \tau^{-1}(u,u)+\frac{C_0}{2}||u||^2_{{H^{\alpha/2}}(\Omega)}\geq C_2||u||^2_{\tau,\alpha/2}
\end{split}
\end{equation*}
with $C_2=\min\{1,C_0/2\}$.
On the other hand, using  Lemma \ref{lemma2.13}, we have
\begin{equation*}
\begin{split}
|a_\tau(u,v)|
&\leq \tau^{-1}|(u,v)|+\frac{1}{2}|b(u,v)|\\
&\leq \left(1+\frac{1}{2}C_1\right)\left(\tau^{-1}||u||_{L^2(\Omega)} ||v||_{L^2(\Omega)} + ||u||_{{H^{\alpha/2}}(\Omega)}||v||_{{H^{\alpha/2}}(\Omega)}\right)\\
&\leq \left(1+\frac{1}{2}C_1\right)\Big\{\left(\tau^{-2}||u||^2_{L^2(\Omega)} ||v||^2_{L^2(\Omega)} + ||u||^2_{{H^{\alpha/2}}(\Omega)}||v||^2_{{H^{\alpha/2}}(\Omega)}\right)\\
&\qquad+\tau^{-1}||u||^2_{L^2(\Omega)}||v||^2_{{H^{\alpha/2}}(\Omega)}+\tau^{-1}||v||^2_{L^2(\Omega)}||u||^2_{{H^{\alpha/2}}(\Omega)}\Big\}^{1/2}\\
&= \left(1+\frac{1}{2}C_1\right)||u||_{\tau,\alpha/2}||v||_{\tau,\alpha/2}.
\end{split}
\end{equation*}
The proof is completed.
\end{proof}
\begin{lemma}\cite{Quarteroni:94}[Interpolation theorem]\label{lemma4.5}
Let $\Omega$ be an open set of $\mathbb{R}$ with a Lipshitz continuous boundary. Let $s_1<s_2$ be two real numbers, and $\mu=(1-\theta)s_1+\theta s_2$ with
$0\leq \theta\leq 1$. Then there exists a constant $C>0$ such that
$$||v||_\mu\leq C||v||_{s_1}^{1-\theta}||v||_{s_2}^\theta~~\forall v\in  H^{s_2}(\Omega).$$
\end{lemma}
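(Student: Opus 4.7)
The plan is to reduce the statement on $\Omega$ to the corresponding statement on the whole line $\mathbb{R}$, where one has the Fourier transform and can read off the norm via the weight $(1+|\omega|^2)^s$. Since $\Omega$ has a Lipschitz continuous boundary, there exists a bounded linear extension operator $E\colon H^{s_2}(\Omega)\to H^{s_2}(\mathbb{R})$ (Stein/Calder\'on extension) which simultaneously extends $H^{s_1}(\Omega)$ to $H^{s_1}(\mathbb{R})$, with
\[
\|Ev\|_{H^s(\mathbb{R})} \le C_E\,\|v\|_{H^s(\Omega)}, \qquad s\in\{s_1,s_2\},
\]
and $(Ev)|_\Omega=v$ so that $\|v\|_{H^\mu(\Omega)}\le\|Ev\|_{H^\mu(\mathbb{R})}$.

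With the reduction in hand, the heart of the proof is an application of H\"older's inequality on the Fourier side. For $\theta\in(0,1)$, set $p=1/(1-\theta)$, $q=1/\theta$, so that $1/p+1/q=1$. Using Definition \ref{definition2.5} (and the observation that $\mu=(1-\theta)s_1+\theta s_2$ implies the pointwise factorization $(1+|\omega|^2)^\mu=[(1+|\omega|^2)^{s_1}]^{1-\theta}[(1+|\omega|^2)^{s_2}]^{\theta}$), I would write
\[
\|Ev\|_{H^\mu(\mathbb{R})}^2
=\int_{\mathbb{R}}\bigl[(1+|\omega|^2)^{s_1}|\widehat{Ev}|^2\bigr]^{1-\theta}\bigl[(1+|\omega|^2)^{s_2}|\widehat{Ev}|^2\bigr]^{\theta}\,d\omega,
\]
and then H\"older with exponents $p,q$ yields
\[
\|Ev\|_{H^\mu(\mathbb{R})}^2 \le \|Ev\|_{H^{s_1}(\mathbb{R})}^{2(1-\theta)}\,\|Ev\|_{H^{s_2}(\mathbb{R})}^{2\theta}.
\]
Chaining with the extension/restriction bounds gives the desired estimate with $C=C_E$. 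The limit cases $\theta=0,1$ are trivial since then $\mu\in\{s_1,s_2\}$ and the right-hand side collapses to a single norm.

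The main obstacle is not the H\"older step (which is essentially automatic once the Fourier characterization is in place) but rather producing an extension operator that is bounded \emph{simultaneously} on $H^{s_1}(\Omega)$ and $H^{s_2}(\Omega)$ for arbitrary real indices $s_1<s_2$, including negative and fractional values. For $\Omega$ a Lipschitz domain (in particular a bounded interval) the Stein total-extension theorem delivers such a universal operator, and for negative indices one dualizes. A secondary technicality is the choice of equivalent norm on $H^s(\Omega)$ when $s$ is not an integer; here one uses that the restriction of the Bessel-potential norm from $\mathbb{R}$ to $\Omega$ is equivalent to the intrinsic Sobolev--Slobodeckij norm on $\Omega$, so that the inequality transfers without change of constants beyond equivalence factors absorbed into $C$.
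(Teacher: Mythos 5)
The paper offers no proof of this lemma at all --- it is quoted verbatim from Quarteroni--Valli \cite{Quarteroni:94} --- so there is nothing internal to compare against; what you have written is the standard proof of the Sobolev interpolation inequality, and it is essentially correct. The core step (factor $(1+|\omega|^2)^{\mu}=\bigl[(1+|\omega|^2)^{s_1}\bigr]^{1-\theta}\bigl[(1+|\omega|^2)^{s_2}\bigr]^{\theta}$ and apply H\"older with exponents $1/(1-\theta)$, $1/\theta$ on the Fourier side) gives the inequality on $\mathbb{R}$ with constant $1$, and the reduction from $\Omega$ to $\mathbb{R}$ via a universal extension operator is the right mechanism. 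Two remarks. First, you correctly identify the only genuinely delicate point: one needs a \emph{single} extension operator bounded on $H^{s_1}(\Omega)$ and $H^{s_2}(\Omega)$ simultaneously for arbitrary real $s_1<s_2$; Stein's construction handles all nonnegative orders at once, but covering negative orders requires either Rychkov-type universal extension on Lipschitz domains or a separate duality argument, and the statement as quoted (arbitrary real $s_1<s_2$) silently relies on this. Second, in this paper the lemma is only ever invoked in Lemma \ref{lemma4.6} with $s_1=0$, $s_2=1$, $\mu=\alpha/2$, so the full generality is never needed and your argument applies without any of the caveats. The inequality $\|v\|_{H^{\mu}(\Omega)}\le \|Ev\|_{H^{\mu}(\mathbb{R})}$ does presuppose that the intrinsic norm on $\Omega$ is dominated by the restriction norm (true for the Sobolev--Slobodeckij seminorm, since the double integral over $\Omega\times\Omega$ is smaller than over $\mathbb{R}\times\mathbb{R}$), which you flag; absorbing the equivalence constants into $C$ is fine.
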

\begin{lemma}[Spectral radius theorem]\label{lemma4.6}
Let $A_{k,\tau}$ be defined by (\ref{4.4}).  Then there exists a constant $C>0$ such that
$$\rho(A_{k,\tau}) \leq C(1+\tau^{-1}h^\alpha)h^{-\alpha}.$$
\end{lemma}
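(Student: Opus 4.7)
The plan is to express the spectral radius as a Rayleigh quotient with respect to the mesh-dependent inner product and then bound it using the continuity of $a_\tau$ combined with a fractional inverse inequality on $V_k$.

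First, since $A_{k,\tau}$ is symmetric positive definite with respect to $(\cdot,\cdot)_k$, I would write
\begin{equation*}
\rho(A_{k,\tau}) = \sup_{v \in V_k \setminus \{0\}} \frac{(A_{k,\tau}v,v)_k}{(v,v)_k} = \sup_{v \in V_k \setminus \{0\}} \frac{a_\tau(v,v)}{(v,v)_k},
\end{equation*}
using the definition (\ref{4.4}). The denominator is controlled by a standard norm equivalence between the mesh-dependent inner product on $V_k$ (quasi-uniform, $C^0$ piecewise linears) and the $L^2$ inner product, i.e., $(v,v)_k \simeq \|v\|_{L^2(\Omega)}^2$, so the problem reduces to estimating $a_\tau(v,v)/\|v\|^2_{L^2(\Omega)}$ for $v\in V_k$.

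Next, from the continuity bound in Lemma \ref{lemma4.4}, together with the definition of $\|\cdot\|_{\tau,\alpha/2}$ in (\ref{4.9}), I get
\begin{equation*}
a_\tau(v,v) \leq C_3 \|v\|^2_{\tau,\alpha/2} = C_3\bigl(\tau^{-1}\|v\|^2_{L^2(\Omega)} + \|v\|^2_{H^{\alpha/2}(\Omega)}\bigr).
\end{equation*}
The first term already produces the factor $\tau^{-1}$ in the desired bound. For the fractional seminorm I would invoke a fractional inverse estimate on $V_k$: starting from the classical inverse inequality $\|v\|_{H^1(\Omega)} \leq C h^{-1}\|v\|_{L^2(\Omega)}$ for piecewise linears and interpolating between $L^2$ and $H^1$ via Lemma \ref{lemma4.5} with $\theta=\alpha/2$, I obtain
\begin{equation*}
\|v\|_{H^{\alpha/2}(\Omega)} \leq C \|v\|^{1-\alpha/2}_{L^2(\Omega)} \|v\|^{\alpha/2}_{H^1(\Omega)} \leq C h^{-\alpha/2}\|v\|_{L^2(\Omega)}.
\end{equation*}
Squaring and combining with the $\tau^{-1}$ contribution gives $a_\tau(v,v) \leq C(\tau^{-1} + h^{-\alpha})\|v\|^2_{L^2(\Omega)}$, and dividing by $(v,v)_k \simeq \|v\|^2_{L^2(\Omega)}$ yields
\begin{equation*}
\rho(A_{k,\tau}) \leq C(\tau^{-1} + h^{-\alpha}) = C(1 + \tau^{-1}h^{\alpha})h^{-\alpha},
\end{equation*}
which is the claim.

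The main potential obstacle is the fractional inverse estimate: strictly speaking Lemma \ref{lemma4.5} is an interpolation inequality on $\Omega$, so one has to check that $v\in V_k\subset H_0^{\alpha/2}(\Omega)$ still satisfies the integer-order inverse inequality used to bootstrap, and that the constants are independent of $h$ (a standard fact for quasi-uniform meshes). The remaining ingredients — the equivalence of the lumped inner product $(\cdot,\cdot)_k$ with $\|\cdot\|_{L^2}$ on linears, and the coercivity/continuity of $a_\tau$ — are either classical or already established in Lemma \ref{lemma4.4}.
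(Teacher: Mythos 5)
Your proposal is correct and follows essentially the same route as the paper: a Rayleigh-quotient representation of $\rho(A_{k,\tau})$, the continuity bound $a_\tau(v,v)\leq C\|v\|^2_{\tau,\alpha/2}$ from Lemma \ref{lemma4.4}, and the fractional inverse estimate $\|v\|_{H^{\alpha/2}(\Omega)}\leq Ch^{-\alpha/2}\|v\|_{L^2(\Omega)}$ obtained by interpolating (Lemma \ref{lemma4.5}) between $L^2$ and the integer-order inverse inequality. The only cosmetic difference is that the paper routes the $H^{\alpha/2}$ term through $b(v,v)$ before applying the inverse estimate, and you are in fact slightly more careful in flagging the equivalence of $(\cdot,\cdot)_k$ with the $L^2$ inner product, which the paper leaves implicit.
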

\begin{proof}
Form Lemmas \ref{lemma2.13},  \ref{lemma4.5} and inverse estimation of \cite{Quarteroni:94}, there exists
\begin{equation*}
\begin{split}
b(v,v)
&\leq  C_1||v||^2_{{H^{\alpha/2}}(\Omega)}\leq  C_1\left(C_2||v||^{1-\alpha/2}_{L^2(\Omega)}\cdot||v||^{\alpha/2}_{{H^{1}}(\Omega)} \right)^2\\
&\leq  C_1\left(C_2||v||^{1-\alpha/2}_{L^2(\Omega)}\cdot h^{-\alpha/2}||v||^{\alpha/2}_{L^2(\Omega)} \right)^2
\leq C_3h^{-\alpha}||v||_{L^2(\Omega)}^2.
\end{split}
\end{equation*}
Let $\Lambda$ be an eigenvalues of $A_{k,\tau}$ with eigenvector $v$.  From  the above equation and Lemmas \ref{lemma4.4}, \ref{lemma2.12}, we have
\begin{equation*}
\begin{split}
\Lambda(A_{k,\tau})
&=\frac{(A_{k,\tau}v,v)}{(v,v)}=\frac{a_{\tau}(v,v)}{(v,v)}\leq \frac{C_4||v||^2_{\tau,\alpha/2}}{||v||_{L^2(\Omega)}^2}\\
&\leq \frac{C_5\left(\tau^{-1}||v||_{L^2(\Omega)}^2+b(v,v)\right)}{||v||_{L^2(\Omega)}^2}
\leq C(1+\tau^{-1}h^\alpha)h^{-\alpha}.
\end{split}
\end{equation*}
The proof is completed.
\end{proof}

\begin{lemma}\label{lemma4.7}
Let $A_{k,\tau}=\{a_{i,j}\}_{i,j=1}^{n_k}$ be defined by (\ref{4.5}) with $\lambda=0$. Then
\begin{equation*}
\frac{\eta}{\rho(A_{k,\tau}) }(\nu_k,\nu_k) \leq (S_k\nu_k,\nu_k)\leq (A_{k,\tau}^{-1}\nu_k,\nu_k)
\quad \forall \nu_k\in V_k,
\end{equation*}
where  $S_k=\eta D_{k,\tau}^{-1}$, $\eta \in (0,1/2]$ and $D_{k,\tau}$ is the diagonal of $A_{k,\tau}$.
\end{lemma}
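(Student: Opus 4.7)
The plan is to prove the two inequalities separately, since they capture distinct algebraic facts about the pair $(A_{k,\tau}, D_{k,\tau})$.

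For the lower bound $\frac{\eta}{\rho(A_{k,\tau})}(\nu_k,\nu_k) \leq (S_k\nu_k,\nu_k)$, I would invoke the classical observation that every diagonal entry of a symmetric positive definite matrix is dominated by its spectral radius. Lemma \ref{lemma4.4} ensures $A_{k,\tau}$ is SPD with respect to $(\cdot,\cdot)_k$, so $D_{k,\tau} \leq \rho(A_{k,\tau})\,I$ in the positive-semidefinite ordering; inversion (legal because $D_{k,\tau}$ is positive definite) reverses the ordering to give $D_{k,\tau}^{-1}\geq \rho(A_{k,\tau})^{-1} I$. Multiplying by $\eta>0$ and using $S_k = \eta D_{k,\tau}^{-1}$ finishes this direction.

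For the upper bound $(S_k\nu_k,\nu_k) \leq (A_{k,\tau}^{-1}\nu_k,\nu_k)$, I would first rephrase it as the operator inequality $\eta D_{k,\tau}^{-1} \leq A_{k,\tau}^{-1}$. Since both operators are SPD, inverting reverses the ordering, so the target is equivalent to $A_{k,\tau} \leq \eta^{-1} D_{k,\tau}$; because $\eta\in (0,1/2]$ gives $\eta^{-1}\geq 2$, it suffices to prove the structural inequality $A_{k,\tau}\leq 2D_{k,\tau}$.

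The main obstacle is this last bound, which is essentially the classical damped-Jacobi smoothability condition and is where the hypothesis $\lambda=0$ becomes essential. My plan is to exploit the explicit structure of $a_\tau$: the mass contribution $\tau^{-1}(\cdot,\cdot)_k$ affects only the diagonal in the mesh-dependent inner-product representation, while the stiffness part coming from $b(\cdot,\cdot)$ is, with $\lambda=0$, a symmetric Toeplitz block arising from the discretization of the pure Riesz fractional derivative on the uniform mesh. A Gerschgorin-type estimate, showing that for each row the sum of the absolute values of the off-diagonal entries is bounded by the corresponding diagonal entry, would yield $\lambda_{\max}(D_{k,\tau}^{-1/2}A_{k,\tau}D_{k,\tau}^{-1/2})\leq 2$, which is equivalent to $A_{k,\tau}\leq 2D_{k,\tau}$. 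The delicate point will be verifying that row-wise inequality from the closed-form entries of the linear finite-element discretization of the untempered Riesz derivative, using the positive diagonal enhancement $\tau^{-1}$ contributed by the mass term to absorb any residual contribution from the fractional stiffness part.
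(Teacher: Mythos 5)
Your proposal is correct and follows essentially the same route as the paper: the lower bound via the fact that each diagonal entry of the SPD operator is dominated by $\rho(A_{k,\tau})$, and the upper bound by reducing $\eta D_{k,\tau}^{-1}\leq A_{k,\tau}^{-1}$ to $A_{k,\tau}\leq 2D_{k,\tau}$, which the paper obtains from the weak diagonal dominance of the symmetric Toeplitz matrix (citing Lemma 2.4 of the Feynman--Kac multigrid paper) exactly as your Gerschgorin argument would. The row-wise dominance check you flag as the delicate point is precisely what the paper also leaves to the cited references rather than verifying from the closed-form entries.
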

\begin{proof}
It is easy to  check that  $A_{k,\tau}$ is a weakly diagonally dominant symmetric  Toeplitz M-matrix \cite{Chen:15,Yue:17}, i.e., $A_{k,\tau}$ is a positive definite matrix with positive entries on the diagonal and nonpositive off-diagonal entries and  the diagonal element of a matrix is at least as large as the sum of the off-diagonal elements in the same row or column  \cite{Briggs:00}.
Then the similar arguments can be performed as Lemma 2.4 of \cite{CDS:17}, the desired result is obtained.
\end{proof}
\begin{remark}\end{remark}
For $\lambda=0$, the Riesz tempered   fractional derivative  (\ref{1.2})
reduces to the  fractional Laplace operator.
We conclude that the stiffness matrix $A_{k,\tau}$ of the {\em linear} finite element approximation on a uniform grid,
after proper scaling $h$, is the same as the stiffness matrix  of the finite difference scheme, see \cite{Chen:15,Yue:17}.
For $\lambda>0$, the analytical solution of the stiffness matrix are special function \cite{Miller:93}, from our numerical experiences,
we find that it is also a weakly diagonally dominant symmetric  Toeplitz M-matrix.

According to (\ref{4.6}), (\ref{4.9}) and Lemma  \ref{lemma4.4}, it is easy to get
\begin{equation}\label{4.10}
\begin{split}
&c||v||_{L^2(\Omega)}\leq  |||v|||_{0,k,\tau}\leq C||v||_{L^2(\Omega)},\\
& c||v||_{\tau,\alpha/2} \leq |||v|||_{1,k,\tau} \leq C ||v||_{\tau,\alpha/2}, \\
& c||A_{k,\tau}v||_{L^2(\Omega)}\leq |||v|||_{2,k,\tau}\leq C||A_{k,\tau}v||_{L^2(\Omega)}.
\end{split}
\end{equation}
\begin{lemma}(Generalized Cauchy-Schwarz inequality with $\tau$-norm)\label{lemma4.8}
For any   real number  $\theta$, it holds
\begin{equation*}
\begin{split}
|a_\tau (v,w)|\leq |||v|||_{1+\theta,k,\tau}|||w|||_{1-\theta,k,\tau}~~ \forall v,w \in V_k.
\end{split}
\end{equation*}
\end{lemma}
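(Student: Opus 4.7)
The plan is to exploit the fact that $A_{k,\tau}$ is symmetric positive definite with respect to the mesh-dependent inner product $(\cdot,\cdot)_k$ (which follows from Lemma \ref{lemma4.4} together with the symmetry of $a_\tau$). This gives $A_{k,\tau}$ an orthonormal eigenbasis in $(\cdot,\cdot)_k$ with strictly positive eigenvalues, and hence the fractional powers $A_{k,\tau}^s$ are well-defined and self-adjoint with respect to $(\cdot,\cdot)_k$ for every real $s$. This is exactly the regime in which the definition $|||v|||_{s,k,\tau}^2 = (A_{k,\tau}^s v, v)_k$ makes sense for arbitrary $s$, including the values $1\pm\theta$ appearing in the statement.

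With that in hand, the key step is to split the exponent of $A_{k,\tau}$ symmetrically. Using (\ref{4.4}) and the self-adjointness of the fractional powers, I would write
\begin{equation*}
a_\tau(v,w) = (A_{k,\tau} v, w)_k = \bigl(A_{k,\tau}^{(1+\theta)/2} v,\; A_{k,\tau}^{(1-\theta)/2} w\bigr)_k.
\end{equation*}
Then the ordinary Cauchy--Schwarz inequality on the inner product $(\cdot,\cdot)_k$ yields
\begin{equation*}
|a_\tau(v,w)| \leq \bigl(A_{k,\tau}^{(1+\theta)/2} v,\; A_{k,\tau}^{(1+\theta)/2} v\bigr)_k^{1/2}\,\bigl(A_{k,\tau}^{(1-\theta)/2} w,\; A_{k,\tau}^{(1-\theta)/2} w\bigr)_k^{1/2}.
\end{equation*}
Using self-adjointness once more to collapse each factor back to the form $(A_{k,\tau}^{1\pm\theta}\cdot,\cdot)_k$ and invoking Definition (\ref{4.6}) immediately gives the claimed bound $|||v|||_{1+\theta,k,\tau}\,|||w|||_{1-\theta,k,\tau}$.

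There is no serious obstacle here; this is the standard spectral proof of a Cauchy--Schwarz interpolation between mesh-dependent norms, and the only point deserving care is the justification of fractional powers for arbitrary real $\theta$ (including negative exponents), which is precisely what the SPD property from Lemma \ref{lemma4.4} delivers. Because $\theta$ is allowed to be any real number, no restriction of the form $|\theta|\le 1$ needs to be imposed: the identity $(A_{k,\tau}^{1+\theta}v,v)_k = (A_{k,\tau}^{(1+\theta)/2}v, A_{k,\tau}^{(1+\theta)/2}v)_k$ holds unconditionally under spectral calculus. One should only remark briefly that the estimate is independent of $h_k$ and $\tau$ in the sense that no explicit mesh constants intervene — any $\tau$- and $h$-dependence is absorbed into the norms $|||\cdot|||_{s,k,\tau}$ themselves, which will later be compared with $||\cdot||_{\tau,\alpha/2}$ and its relatives through (\ref{4.10}).
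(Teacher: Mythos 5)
Your proposal is correct and is essentially the same argument as the paper's: the paper expands $v$ and $w$ in the $(\cdot,\cdot)_k$-orthonormal eigenbasis of $A_{k,\tau}$ and applies discrete Cauchy--Schwarz to $\sum_i \lambda_i c_i d_i = \sum_i (\lambda_i^{(1+\theta)/2}c_i)(\lambda_i^{(1-\theta)/2}d_i)$, which is exactly your symmetric splitting $A_{k,\tau}=A_{k,\tau}^{(1+\theta)/2}A_{k,\tau}^{(1-\theta)/2}$ written in coordinates. The only cosmetic difference is that you phrase it via abstract spectral calculus while the paper works with explicit eigenfunction expansions.
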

\begin{proof}
Let  $\lambda_i$ with  $1\leq i\leq n_k$ be the eigenvalues of the operator $A_{k,\tau}$  and $\psi_i$
 be the corresponding eigenfunction satisfying the orthogonal  relation $(\psi_i,\psi_j)_k=\delta_{i,j}.$
We can write  $v=\sum_{i=1}^{n_k}c_i\psi_i,w=\sum_{j=1}^{n_k}d_j\psi_j$.
From  (\ref{4.6}) and (\ref{4.10}), we  obtain
\begin{equation*}
\begin{split}
a_\tau (v,w)
&=(A_{k,\tau}v,w)=\left(\sum_{i=1}^{n_k}\lambda_ic_i\psi_i,\sum_{j=1}^{n_k}d_j\psi_j\right)\\
&=\sum_{i=1}^{n_k}\lambda_ic_id_i\leq \left(\sum_{i=1}^{n_k}c_i^2\lambda_i^{1+\theta}\right)^{1/2}\left(\sum_{i=1}^{n_k}d_i^2\lambda_i^{1-\theta}\right)^{1/2}\\
&=\left(A_{k,\tau}^{1+\theta}v,v \right)^{1/2}\left(A_{k,\tau}^{1-\theta}w,w\right)^{1/2}=|||v|||_{1+\theta,k,\tau}|||w|||_{1-\theta,k,\tau}.
\end{split}
\end{equation*}
The proof is completed.
\end{proof}
\begin{lemma}\label{lemma4.9}
Let $ u\in H_0^{\alpha/2}(\Omega)$ and $u_h\in V_h$ denote the solution of the variational problems
 $a_{\tau}(u,v)=(g,v)$  $\forall v\in H_0^{\alpha/2}(\Omega)$ and $a_{\tau}(u_h,v_h)=(g,v_h)$ $\forall v_h\in V_h$, respectively.
Then there exists a positive constant $C$ such that
$$||u-u_h||_{L^2(\Omega)}\leq C||u-u_h||_{\tau,\alpha/2}\left( \sup_{g\neq 0}\left\{ \frac{1}{||g||_{L^2(\Omega)}}
\inf_{v_h\in V_h} ||w_g-v_h||_{\tau,\alpha/2} \right\} \right),$$
where $w_g\in H_0^{\alpha/2}(\Omega)$ is the unique solution of   $a_\tau(v,w_g)=(g,v)$ $\forall v\in H_0^{\alpha/2}(\Omega)$.
In particular, if  $w_g\in H^{\alpha}(\Omega)$, we have
\begin{equation*}
\begin{split}
||u-u_h||_{L^2(\Omega)}
&\leq Ch^{\alpha/2}\left(1+\tau^{-1}h^{\alpha}\right)^{1/2}||u-u_h||_{\tau,\alpha/2}.
\end{split}
\end{equation*}
\end{lemma}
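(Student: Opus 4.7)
The plan is to run an Aubin--Nitsche duality argument adapted to the time-step-dependent bilinear form $a_\tau$. Subtracting the two variational identities for $u$ and $u_h$ gives the Galerkin orthogonality $a_\tau(u-u_h,v_h)=0$ for all $v_h\in V_h$, which is the single ingredient particular to the Galerkin method and will be combined with the continuity of $a_\tau$ from Lemma \ref{lemma4.4}.

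Given $g\in L^2(\Omega)$, let $w_g\in H_0^{\alpha/2}(\Omega)$ solve the adjoint problem $a_\tau(v,w_g)=(g,v)$; existence and uniqueness follow from Lemma \ref{lemma4.4} via Lax--Milgram. Testing with $v=u-u_h$ and subtracting the Galerkin-orthogonal quantity $a_\tau(u-u_h,v_h)=0$ for an arbitrary $v_h\in V_h$ yields $(g,u-u_h)=a_\tau(u-u_h,w_g-v_h)$. Continuity of $a_\tau$ then gives $|(g,u-u_h)|\leq C_3\,||u-u_h||_{\tau,\alpha/2}\,||w_g-v_h||_{\tau,\alpha/2}$. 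Taking the infimum over $v_h\in V_h$, dividing by $||g||_{L^2(\Omega)}$, and applying the $L^2$-duality identity $||u-u_h||_{L^2(\Omega)}=\sup_{g\neq 0}(g,u-u_h)/||g||_{L^2(\Omega)}$ produces the first inequality exactly in the form stated.

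For the specialization to $w_g\in H^\alpha(\Omega)$, I would choose $v_h=I_h w_g$ with $I_h$ the interpolation operator from Lemma \ref{lemma3.6}. Applying the interpolation bounds $||w_g-I_hw_g||_{L^2(\Omega)}\leq Ch^\alpha||w_g||_{H^\alpha(\Omega)}$ and $||w_g-I_hw_g||_{H^{\alpha/2}(\Omega)}\leq Ch^{\alpha/2}||w_g||_{H^\alpha(\Omega)}$, and inserting them into the definition (\ref{4.9}) of the fractional $\tau$-norm, I get
\[
||w_g-I_hw_g||^2_{\tau,\alpha/2}\leq C\bigl(\tau^{-1}h^{2\alpha}+h^\alpha\bigr)||w_g||^2_{H^\alpha(\Omega)}=Ch^\alpha\bigl(1+\tau^{-1}h^\alpha\bigr)||w_g||^2_{H^\alpha(\Omega)}.
\]
Combined with a regularity bound $||w_g||_{H^\alpha(\Omega)}\leq C||g||_{L^2(\Omega)}$ (with constant independent of $\tau$), taking a square root and feeding the result into the first inequality produces the claimed $Ch^{\alpha/2}(1+\tau^{-1}h^\alpha)^{1/2}$ factor.

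The main obstacle is precisely this $\tau$-independent elliptic regularity for the adjoint problem. The operator associated with $a_\tau$ differs from the one in Lemma \ref{lemma3.5} by the extra positive term $\tau^{-1}(\cdot,\cdot)$, and a priori this could ruin the $L^2$-to-$H^\alpha$ constant. I expect the Lemma \ref{lemma3.5} argument to carry over verbatim once one notices that expanding $||g||_{L^2(\Omega)}^2$ produces only nonnegative cross terms from the $\tau^{-1}$ part (since the fractional operator plus $\sigma$ is positive on $w_g$), but this verification is the delicate point that must be checked carefully to keep the bound uniform in $\tau$.
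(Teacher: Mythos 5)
Your proof follows the same Aubin--Nitsche duality argument as the paper: Galerkin orthogonality for $a_\tau$, continuity from Lemma \ref{lemma4.4}, interpolation of $w_g$ fed into the definition (\ref{4.9}) of the fractional $\tau$-norm, and the regularity bound $\|w_g\|_{H^{\alpha}(\Omega)}\leq C\|g\|_{L^2(\Omega)}$. The delicate point you flag is genuine --- Lemma \ref{lemma3.5} is stated for the operator \emph{without} the $\tau^{-1}$ mass term, and the paper simply cites it without comment --- and your proposed fix (the cross term $\tau^{-1}(w_g,Lw_g)\geq 0$ by positivity of the fractional operator, so $\|g\|_{L^2(\Omega)}^2$ still controls $\|Lw_g\|_{L^2(\Omega)}^2$ uniformly in $\tau$) is exactly what is needed to make the constant $\tau$-independent.
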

\begin{proof}
Since
\begin{equation*}
\begin{split}
||u-u_h||_{L^2(\Omega)}
&=\sup_{g\neq 0}\frac{|(g,u-u_h)|}{||g||_{L^2(\Omega)}}
=\sup_{g\neq 0}\frac{|a_\tau(u-u_h,w_g)|}{||g||_{L^2(\Omega)}}\\
&=\sup_{g\neq 0}\frac{|a_\tau(u-u_h,w_g-v_h)|}{||g||_{L^2(\Omega)}}
\leq \sup_{g\neq 0}\frac{ C||w_g-v_h||_{\tau,\alpha/2}||u-u_h||_{\tau,\alpha/2}}{||g||_{L^2(\Omega)}}\\
&\leq C||u-u_h||_{\tau,\alpha/2}\left( \sup_{g\neq 0}\left\{ \frac{1}{||g||_{L^2(\Omega)}}
\inf_{v_h\in V_h} ||w_g-v_h||_{\tau,\alpha/2} \right\} \right).
\end{split}
\end{equation*}
Using the property of the  interpolation operator $I_h$  \cite{Brenner:08} and (\ref{4.9}), we have
\begin{equation*}
\begin{split}
 ||w_g-v_h||^2_{\tau,\alpha/2}
 &=\tau^{-1}||w_g-v_h||^2_{L^2(\Omega)}\!+\!||w_g-v_h||^2_{{H^{\alpha/2}}(\Omega)}\\
&\leq C_1h^{\alpha}\left(1+\tau^{-1}h^{\alpha}\right)||w_g||^2_{{H^{\alpha}}(\Omega)},
\end{split}
\end{equation*}
i.e.,
\begin{equation*}
\begin{split}
 ||w_g-v_h||_{\tau,\alpha/2}
 &\leq C_2h^{\alpha/2}\left(1+\tau^{-1}h^{\alpha}\right)^{1/2}||w_g||_{{H^{\alpha}}(\Omega)}.
\end{split}
\end{equation*}
According to the above equations and Lemma \ref{lemma3.5}, there exists
\begin{equation*}
\begin{split}
||u-u_h||_{L^2(\Omega)}
&\leq CC_2h^{\alpha/2}\left(1+\tau^{-1}h^{\alpha}\right)^{1/2}||u-u_h||_{\tau,\alpha/2}.
\end{split}
\end{equation*}
The proof is completed.
\end{proof}

\begin{lemma}(Approximation property with $\tau$-norm)\label{lemma4.10}
There exists a positive constant $C$ such that
\begin{equation*}
\begin{split}
||(I-P_{k-1})v||_{\tau,\alpha/2}\leq Ch^{\alpha/2}(1+\tau^{-1}h^{\alpha})^{1/2}|||v|||_{2,k,\tau}~~ \forall v \in V_k.
\end{split}
\end{equation*}
\end{lemma}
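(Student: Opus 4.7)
The plan is to combine the Galerkin orthogonality of $P_{k-1}$ with the duality argument of Lemma \ref{lemma4.9} and the norm equivalences collected in (\ref{4.10}). First I would use the coercivity and continuity from Lemma \ref{lemma4.4} together with (\ref{4.10}) to reduce the claim to an estimate of $a_\tau(v-P_{k-1}v,v-P_{k-1}v)$. Specifically, up to constants,
$$\|(I-P_{k-1})v\|_{\tau,\alpha/2}^2 \sim |||v-P_{k-1}v|||_{1,k,\tau}^2 \sim a_\tau(v-P_{k-1}v,v-P_{k-1}v).$$

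Next I would exploit the defining $a_\tau$-orthogonality of $P_{k-1}$ (Definition \ref{definition4.3}) to rewrite
$$a_\tau(v-P_{k-1}v,v-P_{k-1}v)=a_\tau(v-P_{k-1}v,v)=(v-P_{k-1}v,A_{k,\tau}v)_k,$$
where the last equality uses (\ref{4.4}). A Cauchy--Schwarz in the mesh-dependent inner product $(\cdot,\cdot)_k$ then gives
$$a_\tau(v-P_{k-1}v,v-P_{k-1}v)\leq |||v-P_{k-1}v|||_{0,k,\tau}\,|||v|||_{2,k,\tau},$$
and by the $L^2$-equivalence in (\ref{4.10}) the first factor is bounded by $C\|v-P_{k-1}v\|_{L^2(\Omega)}$.

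To handle $\|v-P_{k-1}v\|_{L^2(\Omega)}$ I would apply Lemma \ref{lemma4.9} with $V_h=V_{k-1}$ and $u=v\in V_k\subset H_0^{\alpha/2}(\Omega)$, whose Galerkin projection onto $V_{k-1}$ is exactly $P_{k-1}v$. The required $H^{\alpha}$-regularity of the dual solution $w_g$ is furnished by Lemma \ref{lemma3.5}, which remains valid for $a_\tau$ since the additional mass term $\tau^{-1}(u,v)$ is of strictly lower order than the fractional diffusion term and does not spoil the regularity estimate. This yields
$$\|v-P_{k-1}v\|_{L^2(\Omega)}\leq C h^{\alpha/2}(1+\tau^{-1}h^{\alpha})^{1/2}\|v-P_{k-1}v\|_{\tau,\alpha/2},$$
using $h_{k-1}\sim h_k=h$ to absorb constants. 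Substituting back and using (\ref{4.10}) once more to pass from $\|\cdot\|_{\tau,\alpha/2}$ to $|||\cdot|||_{1,k,\tau}$, one factor of $|||v-P_{k-1}v|||_{1,k,\tau}$ cancels, leaving the desired bound
$$\|(I-P_{k-1})v\|_{\tau,\alpha/2}\leq Ch^{\alpha/2}(1+\tau^{-1}h^{\alpha})^{1/2}|||v|||_{2,k,\tau}.$$

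The main obstacle I foresee is the clean transfer of the dual regularity (Lemma \ref{lemma3.5}) to the perturbed bilinear form $a_\tau$, together with tracking the $\tau$-dependent constants so that the extra factor $(1+\tau^{-1}h^\alpha)^{1/2}$ appears exactly as stated. This is important because it is precisely this factor, combined with the spectral radius bound $\rho(A_{k,\tau})\leq C(1+\tau^{-1}h^\alpha)h^{-\alpha}$ from Lemma \ref{lemma4.6}, that will later produce the $h$- and $\tau$-independent V-cycle convergence rate; getting the powers of $h$ and the weights of $\tau^{-1}$ correctly matched is the delicate part. The other routine checks (equivalence of $(\cdot,\cdot)_k$ and $L^2$, validity of Galerkin orthogonality at level $k-1$, and applicability of Lemma \ref{lemma4.9} with $u=v\in V_k$) are standard.
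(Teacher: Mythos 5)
Your proposal is correct and follows essentially the same route as the paper: norm equivalence (\ref{4.10}) to pass to $a_\tau(v-P_{k-1}v,v-P_{k-1}v)$, Galerkin orthogonality of $P_{k-1}$ to replace the second argument by $v$, the generalized Cauchy--Schwarz inequality (Lemma \ref{lemma4.8} with $\theta=1$) to produce $|||v|||_{2,k,\tau}$, the duality estimate of Lemma \ref{lemma4.9} for the $L^2$ factor, and cancellation of one power of $\|(I-P_{k-1})v\|_{\tau,\alpha/2}$. Your remark about carrying the regularity of Lemma \ref{lemma3.5} over to the dual problem for $a_\tau$ is a point the paper passes over silently, and your justification (the added mass term $\tau^{-1}(\cdot,\cdot)$ only enlarges the zeroth-order coefficient, so the constant $C_a$ stays $\tau$-independent) is the right one.
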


\begin{proof}
According to (\ref{4.10}), (\ref{4.6}), (\ref{4.4}), (\ref{4.7}) and  Lemmas \ref{lemma4.8}, \ref{lemma4.9}
\begin{equation*}
\begin{split}
||(I-P_{k-1})v||_{\tau,\alpha/2}^2
&\leq C_1 |||(I-P_{k-1})v|||^2_{1,k,\tau}= C_1(A_{k,\tau}(I-P_{k-1})v,(I-P_{k-1})v)_k \\
&=C_1a_{\tau}((I-P_{k-1})v,(I-P_{k-1})v)=C_1a_{\tau}((I-P_{k-1})v,v)\\
&\leq C_1||(I-P_{k-1})v||_{L^2(\Omega)}|||v|||_{2,k,\tau}\\
&\leq C_1Ch^{\alpha/2}(1+\tau^{-1}h^{\alpha})^{1/2}||(I-P_{k-1})v||_{\tau,\alpha/2}|||v|||_{2,k,\tau}.
\end{split}
\end{equation*}
The proof is completed.
\end{proof}

\begin{definition}\label{definition4.11}
The error operator $E_k:V_k\rightarrow V_k$ is defined recursively by
\end{definition}
\begin{equation*}
\begin{split}
E_1&=0,\\
E_k&=K^m_k[I-(I-E_{k-1})P_{k-1}]K^m_k,
\end{split}
\end{equation*}
where $K_k=I-S_kA_{k,\tau}$, $k\geq 1$ in (\ref{4.8}).

\begin{lemma}\label{lemma4.12}
Let $z,g\in V_k$ satisfy $A_{k,\tau}z=g$ with the initial guess $z_0$.  Then
\begin{equation*}
\begin{split}
E_k(z-z_0)=z-{\rm MG} (k,z_0,g)~~\forall k\geq 1.
\end{split}
\end{equation*}
\end{lemma}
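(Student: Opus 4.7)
The plan is to prove the identity by induction on the level $k$, tracking how the error $z - z_l$ evolves through each stage of Algorithm~\ref{MGM}, and matching that evolution with the recursion defining $E_k$ in Definition~\ref{definition4.11}.

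For the base case $k=1$, the V-cycle sets $B_1 = A_{1,\tau}^{-1}$, so ${\rm MG}(1, z_0, g) = z_0 + A_{1,\tau}^{-1}(g - A_{1,\tau}z_0) = z$, and the error is $0 = E_1(z - z_0)$ since $E_1 = 0$.

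For the inductive step, I would first examine the pre-smoothing stage. Each pre-smoothing iteration $z_l = z_{l-1} + S_k(g - A_{k,\tau}z_{l-1})$ yields $z - z_l = (I - S_k A_{k,\tau})(z - z_{l-1}) = K_k(z - z_{l-1})$, so after $m$ steps $z - z_{m} = K_k^m (z - z_0)$. Next, the algebraic heart of the proof is the coarse-grid correction. From the definition (\ref{4.7}) of $P_{k-1}$ applied with $w \in V_{k-1}$ together with Definition~\ref{definition4.2}, a short manipulation gives the coarse-to-fine identity
\begin{equation*}
A_{k-1,\tau} P_{k-1} = I_k^{k-1} A_{k,\tau} \quad \text{on } V_k, \qquad \text{equivalently} \qquad A_{k-1,\tau}^{-1} I_k^{k-1} = P_{k-1} A_{k,\tau}^{-1}.
\end{equation*}
By the induction hypothesis at level $k-1$ (applied to the residual equation $A_{k-1,\tau}\tilde z = I_k^{k-1}(g - A_{k,\tau}z_m)$ with zero initial guess), the approximate coarse solver $B_{k-1}$ satisfies $B_{k-1} = (I - E_{k-1}) A_{k-1,\tau}^{-1}$. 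Substituting and using $I_{k-1}^k$ as natural injection yields
\begin{equation*}
I_{k-1}^k e^{k-1} \;=\; I_{k-1}^k (I - E_{k-1}) A_{k-1,\tau}^{-1} I_k^{k-1}(g - A_{k,\tau}z_m) \;=\; (I - E_{k-1}) P_{k-1}(z - z_m).
\end{equation*}
Hence $z - z_{m+1} = (z - z_m) - I_{k-1}^k e^{k-1} = [I - (I - E_{k-1}) P_{k-1}](z - z_m)$.

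Finally, the post-smoothing sweep contributes a factor $K_k^m$ in exactly the same way as pre-smoothing, so
\begin{equation*}
z - {\rm MG}(k, z_0, g) \;=\; K_k^m [I - (I - E_{k-1}) P_{k-1}] K_k^m (z - z_0) \;=\; E_k(z - z_0),
\end{equation*}
which closes the induction. The main technical obstacle is the interplay between transfer operators $I_{k-1}^k$, $I_k^{k-1}$ and the energy projection $P_{k-1}$: once the identity $A_{k-1,\tau}^{-1} I_k^{k-1} = P_{k-1} A_{k,\tau}^{-1}$ is established, the rest is linear bookkeeping. Care must be taken to keep track of which operators act on $V_{k-1}$ versus $V_k$, and to use the natural-injection property of $I_{k-1}^k$ so that the embedded coarse correction combines cleanly with quantities on $V_k$.
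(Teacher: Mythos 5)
Your argument is correct and is precisely the standard induction (pre-smoothing contributes $K_k^m$, the identity $A_{k-1,\tau}P_{k-1}=I_k^{k-1}A_{k,\tau}$ turns the coarse-grid correction into $I-(I-E_{k-1})P_{k-1}$, post-smoothing contributes another $K_k^m$) that the paper itself omits and delegates to the cited references. Nothing is missing; you have simply written out in full the proof the authors had in mind.
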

\begin{proof}
The similar arguments can be performed as  \cite{Bank:85,Bramble:87,Brenner:08}, we omit it here.
\end{proof}

\begin{lemma}(Smoothing Property for the V-cycle)\label{lemma4.00010}
\begin{equation*}
\begin{split}
a_{\tau}((I-K_k)K_k^{2m}v,v)\leq \frac {1}{2m}a_{\tau}((I-K_k^{2m})v,v).
\end{split}
\end{equation*}
\end{lemma}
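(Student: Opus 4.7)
The plan is to diagonalize $K_k$ with respect to the energy inner product $a_\tau(\cdot,\cdot)$ and reduce the operator inequality to a scalar inequality parametrized by the eigenvalues $\mu \in [0,1)$ of $K_k$. First, I would verify that $K_k = I - S_k A_{k,\tau}$ is self-adjoint with respect to $a_\tau(v,w) = (A_{k,\tau}v,w)_k$. Since $A_{k,\tau}$ is self-adjoint in $(\cdot,\cdot)_k$ (it is the Galerkin representation of the symmetric bilinear form $a_\tau$) and $S_k = \eta D_{k,\tau}^{-1}$ is self-adjoint in $(\cdot,\cdot)_k$ as well (because $D_{k,\tau}$ is diagonal), a direct computation gives
\begin{equation*}
a_\tau(K_k v, w) = (A_{k,\tau}v, w)_k - (A_{k,\tau} S_k A_{k,\tau} v, w)_k = a_\tau(v, K_k w).
\end{equation*}

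Second, I would use Lemma \ref{lemma4.7} to confine the $a_\tau$-spectrum of $K_k$ to $[0, 1)$. Substituting $\nu_k = A_{k,\tau} w$ into the bound $(S_k \nu_k,\nu_k)_k \leq (A_{k,\tau}^{-1}\nu_k,\nu_k)_k$ yields $a_\tau(S_k A_{k,\tau} w, w) \leq a_\tau(w,w)$, which rearranges to $a_\tau(K_k w, w) \geq 0$. The complementary bound $a_\tau(K_k w, w) < a_\tau(w,w)$ follows because $S_k A_{k,\tau}$ is positive definite on $V_k$ (by the lower bound of Lemma \ref{lemma4.7}). Thus $K_k$ admits an $a_\tau$-orthonormal eigenbasis $\{\phi_i\}$ with eigenvalues $\mu_i \in [0, 1)$.

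Third, expanding $v = \sum_i c_i \phi_i$ in this basis, both sides of the desired inequality become diagonal sums:
\begin{equation*}
a_\tau((I - K_k) K_k^{2m} v, v) = \sum_i c_i^2 (1 - \mu_i) \mu_i^{2m}, \qquad a_\tau((I - K_k^{2m}) v, v) = \sum_i c_i^2 (1 - \mu_i^{2m}).
\end{equation*}
Hence it suffices to establish the termwise scalar inequality $(1 - \mu)\mu^{2m} \leq \tfrac{1}{2m}(1 - \mu^{2m})$ for every $\mu \in [0, 1]$. I would prove this via the factorization
\begin{equation*}
1 - \mu^{2m} = (1 - \mu) \sum_{j=0}^{2m-1} \mu^j \;\geq\; (1 - \mu) \cdot 2m\, \mu^{2m-1} \;\geq\; 2m\, (1 - \mu) \mu^{2m},
\end{equation*}
where the first estimate uses $\mu^j \geq \mu^{2m-1}$ for $0 \leq j \leq 2m-1$ when $\mu \in [0,1]$, and the second uses $\mu^{2m-1} \geq \mu^{2m}$ on the same interval.

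The principal obstacle is the spectral step: justifying that $K_k$ is simultaneously self-adjoint and spectrum-confined to $[0,1)$ with respect to $a_\tau(\cdot,\cdot)$, which requires carefully combining the symmetry of $S_k$ and $A_{k,\tau}$ with the two-sided bound of Lemma \ref{lemma4.7}. Once that structural fact is in hand, the reduction to the scalar inequality is immediate and its verification elementary, so no regularity or tempering-specific analysis enters this proof.
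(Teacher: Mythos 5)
Your proof is correct and is precisely the standard argument the paper defers to: the paper itself omits the proof of this lemma, citing only \cite{Bank:85,Bramble:87,Brenner:08}, and your three steps ($a_\tau$-self-adjointness of $K_k$, confinement of its spectrum to $[0,1)$ via Lemma \ref{lemma4.7} with $\nu_k=A_{k,\tau}w$, and the scalar inequality $(1-\mu)\mu^{2m}\leq\frac{1}{2m}(1-\mu^{2m})$ via the geometric-sum factorization) are exactly the classical smoothing-property proof found in those references. No gaps.
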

\begin{proof}
The similar arguments can be performed as  \cite{Bank:85,Bramble:87,Brenner:08}, we omit it here.
\end{proof}

\begin{lemma}\label{lemma4.13}
Let $m$ be the number of smoothing steps. Then
\begin{equation}\label{4.011}
\begin{split}
a_{\tau}(E_kv,v)\leq  \frac{C^*}{m+C^*}a_{\tau}(v,v) ~~\forall v\in V_k，
\end{split}
\end{equation}
where $C^*$ is a positive constant independent of $h$ and $\tau$.
\end{lemma}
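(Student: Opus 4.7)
The plan is to prove (\ref{4.011}) by induction on $k$, following the classical Bramble–Pasciak–Xu strategy but carried out entirely in the fractional $\tau$-norm scale developed earlier in this section. The base case $k=1$ is immediate from Definition~\ref{definition4.11}. For the inductive step I exploit the algebraic identity $I-(I-E_{k-1})P_{k-1}=(I-P_{k-1})+E_{k-1}P_{k-1}$ to split the recursion
\[
E_k \;=\; K_k^{m}(I-P_{k-1})K_k^{m} \;+\; K_k^{m}E_{k-1}P_{k-1}K_k^{m}.
\]
Setting $w:=K_k^{m}v$, and using that $K_k$ is $a_\tau$-symmetric (because $S_k$ is $(\cdot,\cdot)_k$-symmetric and $A_{k,\tau}$ is the Riesz representer of $a_\tau$ with respect to $(\cdot,\cdot)_k$) together with the $a_\tau$-orthogonality of $P_{k-1}$, I would obtain
\[
a_\tau(E_k v,v) \;=\; a_\tau\bigl((I-P_{k-1})w,\,w\bigr) \;+\; a_\tau\bigl(E_{k-1}(P_{k-1}w),\,P_{k-1}w\bigr).
\]

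The second (inherited) term is bounded by $\tfrac{C^{*}}{m+C^{*}}\,a_\tau(P_{k-1}w,P_{k-1}w)$ via the induction hypothesis applied in $V_{k-1}$. For the first (coarse-grid correction) term I would apply Lemma~\ref{lemma4.8} with $\theta=-1$ to get
$a_\tau((I-P_{k-1})w,w)\le |||(I-P_{k-1})w|||_{0,k,\tau}\,|||w|||_{2,k,\tau}$, then use the $L^2$-equivalence in (\ref{4.10}) together with the $\tau$-weighted approximation property (Lemmas~\ref{lemma4.9} and \ref{lemma4.10}) to bound the first factor by $C h_{k-1}^{\alpha}(1+\tau^{-1}h_{k-1}^{\alpha})\,|||w|||_{2,k,\tau}$. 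The second factor is controlled through the quantitative smoothing property: starting from Lemma~\ref{lemma4.00010}, the identity $I-K_k=S_kA_{k,\tau}$, and the lower bound in Lemma~\ref{lemma4.7} combined with the spectral-radius bound in Lemma~\ref{lemma4.6}, one derives
\[
|||K_k^{m}v|||_{2,k,\tau}^{2} \;\le\; \tfrac{\rho(A_{k,\tau})}{2m\eta}\,a_\tau(v,v) \;\le\; \tfrac{C(1+\tau^{-1}h_k^{\alpha})h_k^{-\alpha}}{2m\eta}\,a_\tau(v,v).
\]
Because $h_{k-1}=2h_k$, the positive and negative powers of the mesh parameter cancel, producing $a_\tau((I-P_{k-1})w,w)\le \tfrac{C^{*}}{m}\,a_\tau(v,v)$ for a constant $C^{*}$ independent of $h$ and $\tau$.

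Inserting both estimates into the decomposition, invoking the Pythagorean identity $a_\tau(w,w)=a_\tau((I-P_{k-1})w,(I-P_{k-1})w)+a_\tau(P_{k-1}w,P_{k-1}w)$ and the non-expansion bound $a_\tau(w,w)\le a_\tau(v,v)$ (which follows from $\eta\in(0,1/2]$), I would close the induction by the standard algebraic rearrangement that turns an estimate of the form $\tfrac{C^{*}}{m}\,a_\tau(v,v)$ on the correction term into $\tfrac{C^{*}}{m+C^{*}}\,a_\tau(v,v)$ on the full error, as in \cite{Bank:85,Bramble:87,Brenner:08}. The main obstacle is precisely the cancellation step: the approximation prefactor $h^{\alpha}(1+\tau^{-1}h^{\alpha})$ from Lemma~\ref{lemma4.10} must be played against the spectral-radius factor $(1+\tau^{-1}h^{\alpha})h^{-\alpha}$, and only the joint use of the sharp $\tau$-weighted approximation property and the sharp lower bound $S_k\ge \eta/\rho(A_{k,\tau})$ in Lemma~\ref{lemma4.7} avoids an uncontrolled $(1+\tau^{-1}h^{\alpha})^{2}$ blowup and yields the $\tau$-uniform constant $C^{*}$ that the statement demands.
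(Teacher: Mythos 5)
Your proposal is correct and follows essentially the same route as the paper's proof: induction on $k$ with the standard splitting of $E_k$, the $\tau$-weighted approximation property of Lemma~\ref{lemma4.10} for the coarse-grid correction term, the smoother/spectral bounds of Lemmas~\ref{lemma4.6} and~\ref{lemma4.7} combined with the smoothing property of Lemma~\ref{lemma4.00010}, and the usual algebra that converts a bound of the form $\tfrac{C^*}{m}\bigl(a_\tau(v,v)-a_\tau(K_k^mv,K_k^mv)\bigr)$ on the correction term into $\tfrac{C^*}{m+C^*}a_\tau(v,v)$ (note that you do need the difference form here, not merely $\tfrac{C^*}{m}a_\tau(v,v)$ plus non-expansiveness). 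One caveat on your closing paragraph: the cancellation you advertise does not in fact occur --- the product $h^{\alpha}(1+\tau^{-1}h^{\alpha})\cdot(1+\tau^{-1}h^{\alpha})h^{-\alpha}$ equals $(1+\tau^{-1}h^{\alpha})^{2}$ exactly, and the paper's own proof simply takes $C^*=\tfrac{CC_2(1+\tau^{-1}h^{\alpha})^{2}}{2}$, so neither your argument nor the paper's removes that factor; it is uniformly bounded only under a mesh condition of the type $\tau\gtrsim h^{\alpha}$.
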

\begin{proof}
Let  $\gamma=\frac{C^*}{m+C^*}$. We prove (\ref{4.011}) by the mathematical induction. It obviously  holds for $k=1$ by Definition \ref{definition4.11}. Assume  that
\begin{equation*}
  a_{\tau}(E_{k-1}v,v)\leq  \gamma a_{\tau}(v,v).
\end{equation*}
Next we prove that (\ref{4.011}) holds. From   Definition \ref{definition4.11} and the above equation, it yields
\begin{equation*}
\begin{split}
a_{\tau}(E_kv,v)
&\leq C_2(1-\gamma)||(I-P_{k-1})K^m_kv||_{\tau,\alpha/2}^2+\gamma a_{\tau}(K^m_kv,K^m_kv).
\end{split}
\end{equation*}
According to  Lemmas \ref{lemma4.10}, \ref{lemma4.6} and  \ref{lemma4.00010}, we get
\begin{equation*}
\begin{split}
||(I-P_{k-1})K^m_kv||_{\tau,\alpha/2}^2
&\leq C(1+\tau^{-1}h^{\alpha})^2\frac {1}{2m}(a_{\tau}(v,v)-a_{\tau}(K_k^mv,K_k^mv)).
\end{split}
\end{equation*}
Taking  $C^*=\frac{CC_2(1+\tau^{-1}h^{\alpha})^2}{2}$ and using the above equations,  the desired results is obtained.
\end{proof}

\begin{theorem}\label{theorem4.14}[Uniform convergence of V-cycle MGM with fractional $\tau$-norm]
Let $m$ be the number of smoothing steps. Then
$$
||z-{\rm MG} (k,z_0,g)||_{\tau,E}\leq  \frac{C^*}{m+C^*}||z-z_0||_{\tau,E} ~~\forall z\in V_k,
$$
where the time-dependent energy  norm associated with (\ref{4.2}) is defined by
$$||z||_{\tau,E}=\sqrt{a_\tau(z,z)}.$$
\end{theorem}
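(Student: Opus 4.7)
The plan is to reduce the theorem to an operator-norm estimate on the multigrid error propagation operator $E_k$ and then combine the already-established quadratic-form bound of Lemma~\ref{lemma4.13} with the self-adjointness and non-negativity of $E_k$ in the $a_\tau$-inner product. Setting $w = z - z_0$, Lemma~\ref{lemma4.12} gives $z - {\rm MG}(k,z_0,g) = E_k w$, so the theorem is equivalent to the operator bound
\[
\sqrt{a_\tau(E_k w, E_k w)} \leq \gamma \sqrt{a_\tau(w,w)}, \qquad \gamma := \frac{C^*}{m+C^*}.
\]

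Next I would verify by induction on $k$ that $E_k$ is self-adjoint and non-negative with respect to $a_\tau(\cdot,\cdot)$. The base case $E_1 = 0$ is immediate. For the inductive step, I would use three ingredients: (i) the smoother $K_k = I - S_k A_{k,\tau}$ is $a_\tau$-self-adjoint because $S_k = \eta D_{k,\tau}^{-1}$ is diagonal, hence $(\cdot,\cdot)_k$-symmetric; (ii) $P_{k-1}$ is $a_\tau$-self-adjoint by Definition~\ref{definition4.3}; (iii) rewriting the bracket as $I - (I-E_{k-1})P_{k-1} = (I - P_{k-1}) + E_{k-1}P_{k-1}$, the first summand is an $a_\tau$-orthogonal projection (so $a_\tau$-self-adjoint and non-negative), while the second is $a_\tau$-self-adjoint and non-negative by the induction hypothesis together with $P_{k-1}^2 = P_{k-1}$. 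Sandwiching by the $a_\tau$-self-adjoint factor $K_k^m$ on both sides preserves both properties.

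Finally, I would combine these facts. Self-adjointness and non-negativity of $E_k$ in $a_\tau$ force its $a_\tau$-spectrum into $[0,\infty)$; Lemma~\ref{lemma4.13} then pushes the spectrum into $[0,\gamma]$, since the Rayleigh quotient $a_\tau(E_k v,v)/a_\tau(v,v)$ is bounded by $\gamma$. For a self-adjoint non-negative operator the operator norm coincides with the spectral radius, so
\[
a_\tau(E_k w, E_k w) = a_\tau(E_k^2 w, w) \leq \gamma \, a_\tau(E_k w, w) \leq \gamma^2 a_\tau(w,w),
\]
which is exactly the claim.

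The main obstacle lies in the inductive verification of non-negativity and self-adjointness of $E_{k-1}P_{k-1}$: one must carefully check that this product, while not obviously symmetric, is indeed $a_\tau$-self-adjoint via
\[
a_\tau(E_{k-1}P_{k-1}u, v) = a_\tau(E_{k-1}P_{k-1}u, P_{k-1}v) = a_\tau(P_{k-1}u, E_{k-1}P_{k-1}v) = a_\tau(u, E_{k-1}P_{k-1}v),
\]
using the $a_\tau$-orthogonality of $P_{k-1}$ (so we can insert $P_{k-1}$ on the second argument, since $E_{k-1}P_{k-1}u \in V_{k-1}$) and the inductive self-adjointness of $E_{k-1}$ on $V_{k-1}$. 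Once this is in place, the rest of the argument is routine bookkeeping and no new estimate beyond Lemma~\ref{lemma4.13} is needed.
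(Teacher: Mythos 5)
Your proposal is correct and follows essentially the same route as the paper: reduce via Lemma~\ref{lemma4.12} to bounding $E_k$ in the $a_\tau$-energy norm, and then use a spectral argument combining Lemma~\ref{lemma4.13} with the $a_\tau$-self-adjointness and non-negativity of $E_k$. In fact your inductive verification that $E_k$ is $a_\tau$-self-adjoint and non-negative (via the decomposition $I-(I-E_{k-1})P_{k-1}=(I-P_{k-1})+E_{k-1}P_{k-1}$) supplies a justification that the paper's proof leaves implicit when it simply asserts $0<\mu_1\leq\cdots\leq\mu_{n_k}\leq\gamma$ for the eigenvalues of $E_k$.
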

\begin{proof}
Let $\mu_i$ be the eigenvalues of the operator $E_k$ and $\varphi_i$ be the corresponding eigenfunction satisfying the orthogonal relation $a_\tau(\varphi_i,\varphi_j)_k=\delta_{i,j}.$
Using  Lemma \ref{lemma4.13}, we obtain  $0<\mu_1\leq\mu_1\cdots\mu_{n_k}\leq\gamma$,
where $\gamma=\frac{C^*}{m+C^*}$ is given in (\ref{4.011}).
Let  $v=\sum_{i=1}^{n_k}c_i\varphi_i$, we have
\begin{equation*}
\begin{split}
||E_kv||_{\tau,E}^2
=a_{\tau}(E_kv,E_kv)=\sum_{i=1}^{n_k}c_i^2\mu_i^2\leq\gamma^2 a_{\tau}(v,v).
\end{split}
\end{equation*}
From  Lemma \ref{lemma4.12} and the above equation, the desired results is obtained.
\end{proof}

\begin{remark}
The time-dependent fractional  MGM can be treated as the elliptic equations arising at a  fixed time step  \cite{Bu:15}.
Here, it is   independent of the time step, i.e., $\tau\rightarrow 0$.

\end{remark}
\section{Numerical Results}

We employ the V-cycle MGM  described in Algorithm  \ref{MGM} to solve the resulting system. As for operation counts, the cost of using FFT to would lead to $O(N\log(N)$ \cite{Chan:07},
where $N$  denotes the number of the  grid points. The stopping criterion is taken as
$\frac{||r^{(i)||}}{||r^{(0)}||}<10^{-10},$
where $r^{(i)}$ is the residual vector after $i$ iterations;
and the  number of  iterations $(m_1,m_2)=(1,2)$  and $(\eta_{pre},\eta_{post})=(1/2,1/2)$.
In all tables, the numerical errors are measured by the $ L_2$ norm,  `Rate' denotes the convergence orders.
`CPU' denotes the total CPU time in seconds (s) for solving the resulting discretized  systems;
and `Iter' denotes the average number of iterations required to solve a general linear system $A_{h,\tau}u_h=g_h$ at each time level.

All numerical experiments are programmed in Matlab, and the computations are carried out  on a PC with the configuration:
Inter(R) Core (TM) i5-3470  CPU 3.20 GHZ and 8 GB RAM and a Windows 7 operating system.
\begin{example}\end{example}
Let us consider the  time-dependent tempered fractional  problem (\ref{1.6})
with $\sigma= 3\lambda^\alpha\kappa_{\alpha}$ and  $a< x < b $, $0 < t \leq T$.
Take the exact solution of the equation as $u(x,t)=e^{-t}x^2(1-x/b)^2$, then the corresponding initial and boundary conditions are, respectively,
$u(x,0)=x^2(1-x/b)^2$ and $u(0,t)=u(b,t)=0$; and the forcing function
\begin{equation*}
\begin{split}
 f(x,t)=&-e^{-t}x^2(1-x/b)^2(1-5\lambda^\alpha\kappa_{\alpha})\\
 &-e^{-t}\kappa_{\alpha}\left(e^{-\lambda x}{ _{a}}D_x^{\alpha}\left[e^{\lambda x}x^2(1-x/b)^2\right]
 +e^{\lambda x}{_{x}}D_{b}^{\alpha}\left[e^{-\lambda} x^2(1-x/b)^2\right]\right).
  \end{split}
\end{equation*}
Here the left and right fractional derivatives of the given functions are calculated by the following  Algorithm \ref{AFD}.
\renewcommand{\algorithmicrequire}{ \textbf{Input:}} 
\renewcommand{\algorithmicensure}{ \textbf{Output:}} 

\begin{algorithm}
\caption{Calculating the Left and Right Fractional Derivatives}
\label{AFD}
\begin{algorithmic}[1]

\REQUIRE ~~\\ 
Original function $u(x) \in C^2(a,b)\cap C_0^1(a,b)$ and $\alpha \in (1,2)$\\
\ENSURE ~~\\
Denote the values of numerically calculating $_aD_x^\alpha u(x)$ and $_xD_b^\alpha u(x)$ by $v_l$ and $v_r$\\
The algorithm JacobiGL of generating the nodes and weights of Gauss-Labatto integral with the weighting function $(1-x)^{1-\alpha}$ or $(1+x)^{1-\alpha}$  can be seen in \cite{Hesthaven:07}\\
\STATE $z,w:=$JacobiGL$(1-\alpha,0,100)$
\STATE $v_l:=\frac{1}{\Gamma(2-\alpha)}\left(\frac{x-a}{2}\right)^{2-\alpha}
\sum\limits_{i=1}^{100} \frac{\partial^2 u}{\partial x^2}\left(\frac{x-a}{2}z_i+\frac{x+a}{2}\right)w_i$
\STATE $z,w:=$JacobiGL$(0,1-\alpha,100)$
\STATE $v_r:=\frac{1}{\Gamma(2-\alpha)}\left(\frac{b-x}{2}\right)^{2-\alpha}
\sum\limits_{i=1}^{100} \frac{\partial^2 u}{\partial x^2}\left(\frac{b-x}{2}z_i+\frac{b+x}{2}\right)w_i$
\end{algorithmic}
\end{algorithm}

\begin{table}[h]\fontsize{9.5pt}{12pt}\selectfont
\begin{center}
\caption {MGM to solve the resulting system  (\ref{4.5}) with $\lambda=0.5$, $a=0$, $b=32$, $T=1$ and
$\tau=T/N$, $h=b/M$, $N=M$. } \vspace{5pt}
\begin{tabular*}{\linewidth}{@{\extracolsep{\fill}}*{9}{c}}                                        \hline  
    $N$ &  $\alpha=1.1$ &    Rate &  Iter &      CPU(s)&  $\alpha=1.8$ &     Rate &  Iter &  CPU(s) \\\hline \\
   $2^7$&     4.7035e-03&       ~~&     21&        1.63&     1.5598e-02&     ~~   &    17 &   1.41   \\ 
   $2^8$&     1.1469e-03&   2.0360&     19&        3.70&     3.8736e-03&    2.0096&    14 &   3.06    \\ 
   $2^9$&     2.8262e-04&   2.0208&     16&        8.07&     9.6415e-04&    2.0063&    11 &   6.14     \\ 
$2^{10}$&     7.0031e-05&   2.0128&     14&       19.17&     2.4353e-04&    1.9852&    13 &  18.20      \\\hline 
    \end{tabular*}\label{tab:1}
  \end{center}
\end{table}
From Table \ref{tab:1}, we numerically confirm that the numerical scheme has second-order accuracy and the computational cost is almost $\mathcal{O}(N \mbox{log} N)$ operations.

\begin{example}\label{example5.2}\end{example}
Consider the time-dependent  tempered fractional  problem (\ref{1.6}) on a domain  $0< x < 1 $,  $0<t \leq 1$.
We take the initial condition $u(x,0)=x(1-x)$ with the homogeneous boundary conditions, and  the forcing function is $f(x,t)=0$.
Since the analytic solutions is unknown for Example \ref{example5.2}, the order of the convergence of the numerical results are computed by the following formula
\begin{equation*}
  {\rm Convergence ~Rate}=\frac{\ln \left(||u_{2h}^N-u_{h}^N||_{L_2}/||u_{h}^N-u_{h/2}^N||_{L_2}\right)}{\ln 2}.
\end{equation*}
\begin{table}[h]\fontsize{9.5pt}{12pt}\selectfont
 \begin{center}
  \caption {The $L_2$ errors and convergence orders for  (\ref{4.5}) with   $\sigma= 0$, $\lambda=0.5$ and
$\tau=1/N$, $h=1/M$, $N=M$.} \vspace{5pt}
\begin{tabular*}{\linewidth}{@{\extracolsep{\fill}}*{10}{c}}                                    \hline  
     $N$&    $\alpha=1.1$&          Rate &  $\alpha=1.5$  & Rate         & $\alpha=1.9$  &   Rate    \\\hline   \\
   $2^7$&    5.4283e-05  &               & 8.1597e-06     &              & 3.6689e-06    &       \\
   $2^8$&    2.7895e-05  &  0.9605       & 3.9436e-06     & 1.0490       & 1.2725e-06    & 1.5277 \\
   $2^9$&    1.4239e-05  &  0.9702       & 1.9427e-06     & 1.0214       & 4.2293e-07    & 1.5891  \\
$2^{10}$&    7.2344e-06  &  0.9769       & 9.6532e-07     & 1.0090       &1.3735e-07     & 1.6226   \\ \hline
    \end{tabular*}\label{table:2}
  \end{center}
\end{table}

Table \ref{table:2} shows that the scheme (\ref{4.5}) preserves the desired first order convergence with nonhomogeneous  initial conditions. And it is not possible to reach second-order convergence
because of the weak regularity of the solution in the region close to the initial point and the boundaries.

\section{Conclusions}

In this work we have developed  variational formulations  for the  time-dependent Riesz  tempered fractional  problem.
The stability of the variational formulation, and the Sobolev regularity  of the variational solutions were established.
There are already some uniform  convergence estimates  for using the V-cycle MGM to solve the  time-dependent PDEs, however,
we notice that the proofs are mainly based on  the fixed time step $\tau$. We introduce   and  define   the  fractional $\tau$-norm,   then  the  convergence rate of the V-cycle MGM   is strictly proved with $\tau \rightarrow 0$. The numerical experiments are performed to verify the convergence with only  $\mathcal{O}(N \mbox{log} N)$ complexity by  FFT.
We remark that the corresponding theoretical  can be extended to the time-fractional Feynmann-Kac equation \cite{CDS:17} including the classical parabolic PDEs.
It should be noted that the real challenge is the verification of the {\em weak} solution regularity  for the fractional  operator and this will the subject of future researches.

\bibliographystyle{amsplain}

\end{document}